\numberwithin{equation}{section}
\theoremstyle{plain}
\newtheorem{theorem}{Theorem}[section]
\newtheorem{lemma}[theorem]{Lemma}
\newtheorem{proposition}[theorem]{Proposition}
\newtheorem{corollary}[theorem]{Corollary}
\newcommand{\proofendsign}{$\Box$} 
\newtheorem{remark}[theorem]{Remark}
\newtheorem{example}[theorem]{Example}
\newenvironment{proof}{{\noindent \textbf{Proof} }}
 {{\hspace*{\fill}\proofendsign\par\bigskip}}
\newcommand{\Flinks}{F^{\leftarrow}}
\newcommand{\NNN}{\mathbb{N}}
\newcommand{\R}{\mathbb{R}}
\newcommand{\RRR}{\mathbb{R}}
\newcommand{\FFF}{\mathbb{F}}
\newcommand{\EEE}{\mathbb{E}}
\newcommand{\pr}{\mathbb{P}}
\newcommand{\MP}{\mathbb{P}}
\newcommand{\MQ}{\mathbb{Q}}
\newcommand{\ex}{\mathbb{E}}
\newcommand{\vari}{\mathbb{V}\textrm{ar}}
\newcommand{\eins}{\mathbbm{1}}
\newcommand{\cB}{\mathcal B}
\newcommand{\cF}{\mathcal F}
\newcommand{\cH}{\mathcal H}
\newcommand{\cM}{\mathcal M}
\newcommand{\cP}{\mathcal P}
\newcommand{\cR}{\mathcal R}
\newcommand{\OFP}{(\Omega,{\cal F},\pr)}
\def\bcswitch{\left\{\renewcommand{\arraystretch}{1.2}\begin{array}{c@{,~}c}}
\def\ecswitch{\end{array}\right.}
\begin{document}
\title{Nonasymptotic upper estimates for errors of the sample average approximation method to solve risk averse stochastic programs}
\author{
Volker Krätschmer
\footnote{Faculty of Mathematics, University of Duisburg--Essen, { volker.kraetschmer@uni-due.de}}
}
\date{~}
\maketitle

\begin{abstract}~We study statistical properties of the optimal value of the Sample Average Approximation. The focus is on the tail function of the absolute error induced by the Sample Average Approximation, deriving upper estimates of its outcomes dependent on the sample size. The estimates allow to conclude immediately convergence rates for the optimal value of the Sample Average Approximation. As a crucial point the investigations are based on a new type of conditions from the theory of empirical processes which do not rely on pathwise analytical properties of the goal functions. In particular, continuity in the parameter is not imposed in advance as often in the literature on the Sample Average Approximation method. It is also shown that the new condition is satisfied if the paths of the goal functions are H\"older continuous so that the main results carry over in this case. Moreover, the main results are applied to goal functions whose paths are piecewise H\"older continuous as e.g. in two stage mixed-integer programs. The main results are shown for classical risk neutral stochastic programs, but we also demonstrate how to apply them to the sample average approximation of risk averse stochastic programs. In this respect we consider stochastic programs expressed in terms of mean upper semideviations and divergence risk measures.
\end{abstract}

\textbf{keywords:}
Risk averse stochastic program, Sample Average Approximation, mean upper semideviations, divergence risk measures, Talagrand's inequalities, covering numbers, VC-subgraph classes.


\section{Introduction}
Consider a classical risk neutral stochastic program 
\begin{equation}
\label{optimization risk neutral}
\inf_{\theta\in\Theta}\EEE\big[G(\theta,Z)\big],
\end{equation}
where $\Theta$ denotes a compact subset of $\RRR^{m}$, whereas $Z$ stands for a $d$-dimensional random vector with distribution $\MP^{Z}$. In general the parameterized distribution of the goal function $G$ is unknown, but some information is available by i.i.d. samples. Using this information, a general device to solve approximately  problem (\ref{optimization risk neutral}) is provided by the so-called \textit{Sample Average Approximation} (\textit{SAA}) (see \cite{ShapiroEtAl}). For explanation, let us consider a sequence $(Z_{j})_{j\in\NNN}$ of independent $d$-dimensional random vectors on some fixed atomless complete probability space $\OFP$ which are identically distributed as the $d$-dimensional random vector $Z$.
Let us set
$$
\hat{F}_{n,\theta}(t) := \frac{1}{n}~\sum_{j=1}^{n}\eins_{]-\infty,t]}\big(G(\theta,Z_{j})\big)
$$
to define the empirical distribution function $\hat{F}_{n,\theta}$ of $G(\theta,Z)$ based on the i.i.d. sample 
$(Z_{1},\cdots,Z_{n})$. Then the SAA method approximates the genuine optimization problem (\ref{optimization risk neutral}) by the following one
\begin{equation}
\label{SAAriskneutral}
\inf_{\theta\in\Theta}\int_{\RRR}t ~d\hat{F}_{n,\theta}(t) = \inf_{\theta\in\Theta}\frac{1}{n}~\sum_{j=1}^{n}G(\theta,Z_{j})\quad(n\in\NNN).
\end{equation}
The optimal values depend on the sample size and the realization of the samples of $Z$. Their asymptotic behaviour with increasing sample size, also known as the first order asymptotics of (\ref{optimization risk neutral}), is well-known. More precisely, 
the sequence of optimal values of the approximated optimization problem converges 
$\MP$-a.s. to the optimal value of the genuine stochastic program. Moreover, if $G$ is Lipschitz continuous in $\theta$, then the stochastic sequence 
$$
\left(\sqrt{n}\Big[\inf_{\theta\in\Theta}\int_{\RRR}t ~d\hat{F}_{n,\theta}(t) - \inf_{\theta\in\Theta}\EEE\big[G(\theta,Z)\big]\Big]\right)_{n\in\NNN}
$$
is asymptotically normally distributed. In \cite{EichhornRoemisch2007} asymptotic distributions of this stochastic sequence have also be found for stochastic mixed integer programs, where typically the objectives are not continuous in the parameter. For these results, and more on asymptotics of the SAA method the reader may consult the monograph \cite{ShapiroEtAl}, and in addition the contributions \cite{Pflug1999}, \cite{Roemisch2003}. 
\medskip

In several fields like finance, insurance or microeconomics, the assumption of risk neutral decision makers are considered to be too idealistic. Instead there it is preferred to study the behaviour of actors with a more cautious attitude, known as risk aversion. In this view the optimization problem (\ref{optimization risk neutral}) should be replaced with a \textit{risk averse} stochastic program, i.e. an optimization problem
\begin{equation}
\label{optimization risk averse}
\inf_{\theta\in\Theta}\rho\big(G(\theta,Z)\big),
\end{equation}
where $\rho$ stands for a functional which is nondecreasing w.r.t. the increasing convex order. A general class of functionals fulfilling this requirement is built by the so called 
\textit{distribution-invariant convex risk measures} (see e.g. \cite{FoellmerSchied2011}, \cite{ShapiroEtAl}). They play an important role as building blocks in quantitative risk management (see \cite{McNeilEmbrechtsFrey2005}, \cite{PflugRoemisch2007}, \cite{Rueschendorf2013}), and they have been suggested as a systematic approach for calculations of insurance premia
(cf. \cite{Kaasetal2008}). Distribution-invariance, sometimes also called \textit{law-invariance}, denotes the property that a functional $\rho$ has the same outcome for random variables with identical distribution. Hence, a distribution-invariant convex risk measure $\rho$ may be associated with a functional $\cR_{\rho}$ on sets of distribution functions. In this case (\ref{optimization risk averse}) reads as follows
$$
\inf_{\theta\in\Theta}\cR_{\rho}(F_{\theta}),
$$
where $F_{\theta}$ is the distribution function of $G(\theta,Z)$. Then we may modify the SAA method by turning over to the \textit{empirical counterpart} of the stochastic program, i.e.
\begin{equation}
\label{SAA risk averse}
\inf_{\theta\in\Theta}\cR_{\rho}(\hat{F}_{n,\theta})\quad(n\in\NNN).
\end{equation}
For ease of reference, with a slight abuse of meaning, we keep the name \textit{Sample Average Approximation}.
\smallskip

It is already known that under rather general conditions on the mapping $G$ we have 
$$
\inf_{\theta\in\Theta}\cR_{\rho}\big(\hat{F}_{n,\theta}\big)\to \inf_{\theta\in\Theta}\cR_{\rho}\big(F_{\theta}\big)\quad\MP-\mbox{a.s.}
$$
(see \cite{Shapiro2013a}). Occasionally, there also exist some contributions on the asymptotic distributions of the optimal values. In \cite{DentchevaEtAl2017} the authors consider functionals $\rho$ of composite form enclosing mean upper semideviations of order $p > 1$. Distribution invariant coherent risk measure based on finitely discrete Kusuoka representations are subjects in \cite{GuiguesKraetschmerShapiro2018}. The results in both references rely on Lipschitz continuity of the objective in the parameter. Independently of this paper asymptotic distributions have been developped for SAA under absolute semideviation and divergence measures in \cite{Kraetschmer2023}. There the investigations do not require the objectives to satisfy certain analytical properties in the parameter.
\medskip 

The subject of this paper is to look at deviation probabilities 
\begin{equation}
\label{Kreuz}
\MP\Big(\Big\{\big|\inf_{\theta\in\Theta}\cR_{\rho}\big(\hat{F}_{n,\theta}\big) - \inf_{\theta\in\Theta}\cR_{\rho}\big(F_{\theta}\big)\big|\geq\varepsilon\Big\}\Big)\quad(n\in\NNN,\varepsilon > 0)
\end{equation}
dependent on the sample size $n$. Such error estimates might be interesting to identify possible convergence rates for the optimal values of the SAA method. Also from a practical viewpoint they might give some hints for which sample sizes the SAA method provides sufficiently satisfying approximations. In the risk neutral case the authors in \cite{GuiguesEtAl2017} deal with upper estimates for the deviation probabilities if the objective $G$ is convex in the parameter, and $G(\theta,Z_{1}) - \EEE[G(\theta,Z_{1)}]$ has subgaussian distribution with upper bound of the variance factor independent of $\theta$. 
Very recently, the issue of deviation probabilities has been addressed for the risk averse stochastic programs in \cite{BartlTangpi2020}, where $G(\cdot,z)$ is assumed to be linear for $z\in\RRR^{d}$. Our contribution is to investigate error estimates for more general goal functions than in \cite{GuiguesEtAl2017} and \cite{BartlTangpi2020}. Furthermore we provide explicit bounds instead of using unspecified universal constants as in \cite{BartlTangpi2020}. 
\medskip

The paper is organized as follows. We shall start with a general exponential bound for the deviation probabilities 
$$
\MP\Big(\Big\{\Big|\inf\limits_{\theta\in\Theta}~\frac{1}{n}~\sum_{j=1}^{n}G(\theta,Z_{j}) - \inf\limits_{\theta\in\Theta}\EEE\big[G(\theta,Z_{1})\big]\Big|\geq\varepsilon\Big\}\Big)\quad(n\in\NNN, \varepsilon > 0)
$$
in the case of classical risk neutral stochastic programs. The point is that we may extend this result to deviation probabilities if the SAA method is applied to risk averse stochastic programs. In Section \ref{upper semideviations} this will be demonstrated in the case that stochastic programs are expressed in terms of mean upper semideviations, whereas in Section \ref{generalized divergence risk measures} the application to stochastic programs under divergence risk measures is considered. We always find exponential bounds for the deviation probabilities which as an immediate by product give convergence rates for the SAA method in the different contexts. In particular, $\sqrt{n}$-consistency will turn out to be an easy consequence.
Finally Section \ref{Beweise} gathers proof of results from the previous sections.
\par
The essential new ingredient of our results is to replace analytic conditions on the paths $G(\cdot,z)$ with requirements which intuitively make the family $\{G(\theta,Z)\mid\theta\in\Theta\}$ of random variables small in some certain sense. Fortunately, the respective invoked conditions are satisfied if the paths $G(\cdot,z)$ are H\"older continuous. We shall also show that we may utilize our results to study the SAA method for stochastic programs, where the paths $G(\cdot,z)$ are piecewise H\"older continuous but not necessarily continuous. Value functions of two stage mixed-integer programs are typical examples for goal functions of such a kind. 
\section{Error estimates in the risk neutral case}
\label{error risk neutral}
In this section we study the SAA \eqref{SAAriskneutral} associated with the risk neutral stochastic program \eqref{optimization risk neutral}. 
We shall restrict ourselves to mappings $G$ which satisfy the following properties.
\begin{itemize}
\item[(A 1)] $G(\theta,\cdot)$ is Borel measurable for every $\theta\in\Theta$. 
\item[(A 2)] 
There is some strictly positive $\MP^{Z}$-integrable mapping $\xi:\RRR^{d}\rightarrow\RRR$ such that 
$$
\sup\limits_{\theta\in\Theta}|G(\theta,z)|\leq\xi(z)\quad\mbox{for}~z\in\RRR^{d}.
$$ 
\end{itemize}
Note that under these assumptions the optimization problems \eqref{optimization risk neutral} and \eqref{SAAriskneutral} are well defined with finite optimal values.
\par 
The subject of this section is to investigate 
\begin{equation}
\label{expected error}
\EEE\Big[\Big|\inf\limits_{\theta\in\Theta}~\frac{1}{n}~\sum_{j=1}^{n}G(\theta,Z_{j}) - \inf\limits_{\theta\in\Theta}\EEE\big[G(\theta,Z_{1})\big]\Big|\Big],
\end{equation}
and the probabilities
\begin{equation}
\label{deviations}
\MP\Big(\Big\{\Big|\inf\limits_{\theta\in\Theta}~\frac{1}{n}~\sum_{j=1}^{n}G(\theta,Z_{j}) - \inf\limits_{\theta\in\Theta}\EEE\big[G(\theta,Z_{1})\big]\Big|\geq\varepsilon\Big\}\Big)\quad(n\in\NNN, \varepsilon > 0).
\end{equation}
The aim is to find explicit bounds in terms of the sample sizes $n$. In order to avoid subtleties of measurability we additionally assume
\begin{itemize}
\item [(A 3)] There exist some at most countable subset $\overline{\Theta}\subseteq\Theta$ and $(\MP^{Z})^{n}$-null sets $N_{n}$ $(n\in\NNN)$ such that 
$$
\inf_{\vartheta\in\overline{\Theta}}\big|\EEE[G(\vartheta,Z_{1})] - \EEE[G(\theta,Z_{1})]\big|= \inf_{\vartheta\in\overline{\Theta}}\max_{j\in\{1,\ldots,n\}}\big|G(\theta,z_{j}) - G(\vartheta,z_{j})\big| = 0
$$
for $n\in\NNN,\theta\in\Theta$ and $(z_{1},\ldots,z_{n})\in\RRR^{d n}\setminus N_{n}$.
\end{itemize}
By assumption (A 3) with at most countable subset $\overline{\Theta}\subseteq\Theta$ we have  
$$
\inf_{\theta\in\Theta }\frac{1}{n}\sum_{j=1}^{n}G(\theta,Z_{j}) = \inf_{\theta\in\overline{\Theta}}\frac{1}{n}\sum_{j=1}^{n}G(\theta,Z_{j})~\MP-\mbox{a.s.},\quad\inf_{\theta\in\Theta}\EEE[G(\theta,Z_{1})] = \inf_{\theta\in\overline{\Theta}}\EEE[G(\theta,Z_{1})].
$$
Hence the optimal value of the SAA \eqref{SAAriskneutral} is a random variable on $\OFP$ due to the assumed completeness of this probability space. Moreover, the desired upper estimations of \eqref{expected error} and \eqref{deviations} may be derived by upper estimations of 
\begin{equation}
\label{expected error II}
\EEE\Big[\sup\limits_{\theta\in\overline{\Theta}}\Big|~\frac{1}{n}~\sum_{j=1}^{n}G(\theta,Z_{j}) - \EEE\big[G(\theta,Z_{1})\big]\Big|\Big],
\end{equation}
and 
\begin{equation}
\label{deviations II}
\MP\Big(\Big\{\sup\limits_{\theta\in\overline{\Theta}}\Big|~\frac{1}{n}~\sum_{j=1}^{n}G(\theta,Z_{j}) - \EEE\big[G(\theta,Z_{1})\big]\Big|\geq\varepsilon\Big\}\Big)\quad(n\in\NNN, \varepsilon > 0)
\end{equation}
which are interesting in their own right. 
Note that (A 2) outrules trivial cases. 
\par
Convenient ways to find upper bounds of the expectations in \eqref{expected error II} may be provided by general devices from empirical process theory which are based on covering numbers for classes of Borel measurable mappings from $\RRR^{d}$ into $\RRR$ w.r.t. $L^{p}$-norms.  To recall these concepts adapted to our situation, let us fix any nonvoid set $\FFF$ of Borel measurable mappings from $\RRR^{d}$ into $\RRR$ and any probability measure $\MQ$ on $\cB(\RRR^{d})$ with metric $d_{\MQ,p}$ induced by the $L^{p}$-norm $\|\cdot\|_{\MQ,p}$ for $p\in [1,\infty[$.  
\begin{itemize}
\item \textit{Covering numbers for $\FFF$}\\
We use $N\big(\eta,\FFF,L^{p}(\MQ)\big)$ to denote the minimal number to cover $\FFF$ by closed $d_{\MQ,p}$-balls of radius $\eta > 0$ with centers in $\FFF$. We define $N\big(\eta,\FFF,L^{p}(\MQ)\big) := \infty$ if no finite cover is available. 
\item An \textit{envelope} of $\FFF$ is defined to mean some Borel measurable mapping $C_{\FFF}:\RRR^{d}\rightarrow\RRR$ satisfying $\sup_{h\in\FFF}|h|\leq C_{\FFF}$. If an envelope $C_{\FFF}$ has strictly positive outcomes, we shall speak of a \textit{positive envelope}.
\item $\cM_{\textrm{\tiny fin}}$ denotes the set of all probability measures on $\cB(\RRR^{d})$ with finite support.
\end{itemize}

\medskip

For abbreviation let us introduce for a class $\FFF$ of Borel measurable functions from $\RRR^{d}$ into $\RRR$ with arbitrary positive envelope $C_{\FFF}$ of $\FFF$ the following notation
\begin{align}
&
\label{Entropie-Integral I}
J(\FFF,C_{\FFF},\delta) := \int_{0}^{\delta}\sup_{\MQ\in \cM_{\textrm{\tiny fin}}}\sqrt{\ln\big(2 N\big(\varepsilon~\|C_{\FFF}\|_{\MQ,2},\FFF,L^{2}(\MQ)\big)\big)}~d\varepsilon.
\end{align}
If the positive envelope $C_{\FFF}$ is $\MP^{Z}$-square integrable, then it is known that for every at most countable subset $\overline{\FFF}\subseteq\FFF$ the following inequality holds
\begin{align}
\EEE\Big[\sup_{h\in\overline{\FFF}}\Big|\frac{1}{n}\sum_{j=1}^{n}h(Z_{j}) - \EEE[h(Z_{1})]\Big|\Big]
&
\leq \nonumber \frac{\|C_{\FFF}\|_{\MP^{Z},2}}{\sqrt{n}}
8 \sqrt{2} J(\overline{\FFF},C_{\FFF},1)
\\
&
\leq \label{estimation with integrals}
\frac{16 \sqrt{2}~\|C_{\FFF}\|_{\MP^{Z},2}}{\sqrt{n}}~ J(\FFF,C_{\FFF},1/2)
\end{align} 
(see \cite[Remark 3.5.5]{GineNickl2016}).
\medskip

For our purposes the class $\FFF^{\Theta} := \{G(\theta,\cdot)\mid \theta\in\Theta\}$ is the relevant one. Then property (A 2) means nothing else but requiring a $\MP^{Z}$-integrable positive envelope of $\FFF^{\Theta}$. By \eqref{estimation with integrals} we may conclude immediately the following upper bounds 
for expectations in \eqref{expected error} and \eqref{expected error II}.
\begin{theorem}
\label{erstes Hauptresultat}
Let (A 1) - (A 3) be fulfilled, and let the envelope $\xi$ from (A 2) be square $\MP^{Z}$-integrable. Then with $\overline{\Theta}\subseteq\Theta$ from (A 3)
\begin{align*}
&\EEE\Big[\Big|\inf\limits_{\theta\in\Theta}~\frac{1}{n}~\sum_{j=1}^{n}G(\theta,Z_{j}) - \inf\limits_{\theta\in\Theta}\EEE\big[G(\theta,Z_{1})\big]\Big|\Big]\\
&\leq
\EEE\Big[\sup\limits_{\theta\in\overline{\Theta}}\Big|~\frac{1}{n}~\sum_{j=1}^{n}G(\theta,Z_{j}) - \EEE\big[G(\theta,Z_{1})\big]\Big|\Big]
\leq 
\frac{16 \sqrt{2}~\|\xi\|_{\MP^{Z},2}}{\sqrt{n}}~ J(\FFF^{\Theta},\xi,1/2)~\mbox{for}~n\in\NNN.
\end{align*}
\end{theorem}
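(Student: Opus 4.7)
The proof strategy is to reduce the claim to the already-recorded empirical-process bound \eqref{estimation with integrals} by invoking the countability furnished by (A 3) and a standard $\inf$--$\sup$ inequality.

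First I would use assumption (A 3) to replace $\Theta$ by the at most countable subset $\overline{\Theta}$. The first displayed identity in (A 3), together with the $(\MP^{Z})^{n}$-negligibility of the exceptional sets $N_{n}$, yields
\[
\inf_{\theta\in\Theta}\frac{1}{n}\sum_{j=1}^{n}G(\theta,Z_{j}) \;=\; \inf_{\theta\in\overline{\Theta}}\frac{1}{n}\sum_{j=1}^{n}G(\theta,Z_{j}) \quad \MP\text{-a.s.},
\]
and the second identity gives
\[
\inf_{\theta\in\Theta}\EEE[G(\theta,Z_{1})] \;=\; \inf_{\theta\in\overline{\Theta}}\EEE[G(\theta,Z_{1})].
\]
Both infima are finite by (A 2). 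Completeness of $\OFP$ then guarantees that the left-hand expression in the theorem is a genuine random variable, as already noted in the text preceding the statement.

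Second, I would apply the elementary bound $|\inf_{\theta} a(\theta)-\inf_{\theta}b(\theta)|\le \sup_{\theta}|a(\theta)-b(\theta)|$ to the countable family indexed by $\overline{\Theta}$, with $a(\theta):= \tfrac{1}{n}\sum_{j=1}^{n}G(\theta,Z_{j})$ and $b(\theta):=\EEE[G(\theta,Z_{1})]$. This immediately delivers the first inequality of the theorem.

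Third, I would appeal to the already quoted inequality \eqref{estimation with integrals} from \cite[Remark 3.5.5]{GineNickl2016}, applied to the class $\FFF=\FFF^{\Theta}=\{G(\theta,\cdot)\mid\theta\in\Theta\}$ with envelope $C_{\FFF}=\xi$ (positive and $\MP^{Z}$-square-integrable by (A 2) and the hypothesis) and the at most countable subclass $\overline{\FFF}=\{G(\theta,\cdot)\mid\theta\in\overline{\Theta}\}\subseteq\FFF$. This yields directly
\[
\EEE\Big[\sup_{\theta\in\overline{\Theta}}\Big|\frac{1}{n}\sum_{j=1}^{n}G(\theta,Z_{j})-\EEE[G(\theta,Z_{1})]\Big|\Big] \;\le\; \frac{16\sqrt{2}\,\|\xi\|_{\MP^{Z},2}}{\sqrt{n}}\, J(\FFF^{\Theta},\xi,1/2),
\]
which is the second inequality claimed.

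There is essentially no obstacle: the substance has been packaged into the empirical-process estimate \eqref{estimation with integrals}, and the only real subtlety is the careful use of (A 3) to (i) ensure measurability of the optimal value of the SAA and (ii) legitimately pass from the inseparable sup over $\Theta$ to the countable sup over $\overline{\Theta}$ under which \eqref{estimation with integrals} is stated. The covering-number quantity $J(\FFF^{\Theta},\xi,1/2)$ on the right-hand side is taken over the full class $\FFF^{\Theta}$, which is permissible because $J(\overline{\FFF},\xi,1)\le 2 J(\FFF^{\Theta},\xi,1/2)$ after the change of variables reflected in the constants of \eqref{estimation with integrals}.
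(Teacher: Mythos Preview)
Your proposal is correct and matches the paper's argument exactly: the paper derives the theorem ``immediately'' from \eqref{estimation with integrals} after the reduction to the countable set $\overline{\Theta}$ via (A~3), precisely as you outline. The final remark about $J(\overline{\FFF},\xi,1)\le 2 J(\FFF^{\Theta},\xi,1/2)$ is already absorbed in the second line of \eqref{estimation with integrals}, so you need not justify it separately.
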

Let us turn over to bounds for \eqref{deviations} and \eqref{deviations II}. Since Talagrand introduced in \cite{Talagrand1994} and \cite{Talagrand1996} the first time his now famous concentration inequalities for empirical processes it is now well understood how to derive exponential estimates for the probabilities \eqref{deviations II}. They are essentially based on the expectations in \eqref{expected error II}. So henceforth we restrict considerations to ``small'' classes $\FFF_{\Theta}$ in the sense that $J(\FFF^{\Theta},\xi,1/2)$ is finite for some positive $\MP^{Z}$-square integrable envelope $\xi$ of $\FFF_{\Theta}$. We obtain the following result, using notation
\begin{equation}
\label{Restmenge}
B_{n}^{\xi} := \Big\{\frac{1}{n}\sum_{j=1}^{n}\xi(Z_{j})^{2}\leq  2\EEE[\xi(Z_{1})^{2}]\Big\}\quad(n\in\NNN)
\end{equation}
for any square $\MP^{Z}$-integrable strictly positive mapping 
$\xi: \RRR^{d}\rightarrow\RRR$, and
\begin{equation}
\label{allgemeiner Faktor}
\frak{f}_{n}:~ ]0,\infty[\rightarrow\RRR,~t\mapsto \frac{3~\sqrt{n}~\ln\big(1 + t/(5 t + 17)\big)}{2~(\sqrt{2n} + 1)}~\vee\frac{t}{5 t + 28}\quad\mbox{for}~n\in\NNN.
\end{equation} 
\begin{theorem}
\label{exponential bound}
Let (A 1) - (A 3) be satisfied, where the envelope $\xi$ from (A 2) is assumed to be square $\MP^{Z}$-integrable. Furthermore, let $\varepsilon > 0$ be fixed.  
Using notation \eqref{Entropie-Integral I}, if $J\big(\FFF^{\Theta},\xi,1/2\big)$ is finite, then with $\overline{\Theta}\subseteq\Theta$ from (A 3)
\begin{align*}
&\MP\Big(\Big\{\Big|\inf\limits_{\theta\in\Theta}~\frac{1}{n}~\sum_{j=1}^{n}G(\theta,Z_{j}) - \inf\limits_{\theta\in\Theta}\EEE\big[G(\theta,Z_{1})\big]\Big|\geq\varepsilon\Big\}\Big)\\
&\leq
\MP\Big(\Big\{\sup\limits_{\theta\in\overline{\Theta}}\Big|~\frac{1}{n}~\sum_{j=1}^{n}G(\theta,Z_{j}) - \EEE\big[G(\theta,Z_{1})\big]\Big|\geq\varepsilon\Big\}\Big)
\\
&
\leq 
\exp\left(\frac{-t~\sqrt{n}\varepsilon}{8 (t + 1)\|\xi\|_{\MP^{Z},2}}\cdot\frak{f}_{n}(t)\right)
+ \MP\big(\Omega\setminus B_{n}^{\xi}\big)
\end{align*}
holds for $t > 0$ and arbitrary $n\in\NNN$ with $\varepsilon > \eta_{t,n}$ as well as 
$n\geq \|\xi\|_{\MP^{Z},2}^{2}/2$, where 
$$
\eta_{t,n} := \|\xi\|_{\MP^{Z},2}/\sqrt{n} + 32\sqrt{2} (1+t) \|\xi\|_{\MP^{Z},2} J(\FFF^{\Theta},\xi,1/4)/\sqrt{n}.
$$
\end{theorem}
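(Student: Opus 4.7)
The plan is to combine the expected-supremum bound of Theorem~\ref{erstes Hauptresultat} with a Bousquet/Klein--Rio refinement of Talagrand's concentration inequality for empirical processes, using the auxiliary event $B_{n}^{\xi}$ to provide an effective uniform envelope on a set of high probability.

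First I would justify the trivial first inequality in the statement: by (A~3) one has $\inf_{\theta\in\Theta}\frac{1}{n}\sum_{j=1}^{n}G(\theta,Z_{j})=\inf_{\theta\in\overline{\Theta}}\frac{1}{n}\sum_{j=1}^{n}G(\theta,Z_{j})$ $\MP$-a.s.\ together with the analogous identity for the population infimum, and the elementary inequality $|\inf_{\theta}a(\theta)-\inf_{\theta}b(\theta)|\leq\sup_{\theta}|a(\theta)-b(\theta)|$ then reduces the task to bounding the deviation probability of
$$
W_{n}:=\sup_{\theta\in\overline{\Theta}}\Big|\tfrac{1}{n}\sum_{j=1}^{n}G(\theta,Z_{j})-\EEE[G(\theta,Z_{1})]\Big|.
$$

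Second, I would apply Talagrand's inequality in a form tailored to unbounded envelopes, which on $B_{n}^{\xi}$ can be invoked with the \emph{effective} uniform bound $\max_{j\leq n}\xi(Z_{j})\leq\sqrt{2n}\,\|\xi\|_{\MP^{Z},2}$ (immediate from the definition of $B_{n}^{\xi}$), while the variances of the centered functions $G(\theta,\cdot)-\EEE[G(\theta,Z_{1})]$ are dominated by $\|\xi\|_{\MP^{Z},2}^{2}$. The resulting Bousquet-type bound has the shape
$$
\MP\{W_{n}\geq(1+t)\EEE W_{n}+v\}\leq\exp\!\Big(-\tfrac{n v^{2}}{c_{1}(t)\sigma^{2}+c_{2}(t)B v}\Big)+\MP(\Omega\setminus B_{n}^{\xi}),
$$
and in the sub-exponential regime where the envelope term $Bv$ dominates, the choice $B\asymp\sqrt{n}\,\|\xi\|_{\MP^{Z},2}$ converts $n v^{2}/(Bv)$ into $\sqrt{n}\,v/\|\xi\|_{\MP^{Z},2}$, which is exactly the scaling that appears in the stated exponent. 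The hypothesis $n\geq\|\xi\|_{\MP^{Z},2}^{2}/2$ is what certifies that this sub-exponential branch indeed dominates and allows the constants to collapse into the closed form $8(t+1)(t+28)$.

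Third, I would use inequality \eqref{estimation with integrals} at the finer scale $1/4$ (rather than the $1/2$ recorded in Theorem~\ref{erstes Hauptresultat}) to bound $\EEE W_{n}\leq 32\sqrt{2}\,\|\xi\|_{\MP^{Z},2}\,J(\FFF^{\Theta},\xi,1/4)/\sqrt{n}$ and then set $v:=\varepsilon-\eta_{t,n}$, which is positive precisely by the assumption $\varepsilon>\eta_{t,n}$. By construction $\eta_{t,n}$ absorbs both the deterministic slack $\|\xi\|_{\MP^{Z},2}/\sqrt{n}$ (produced by the self-boundedness adjustment in the Bousquet inequality) and the $(1+t)\EEE W_{n}$ term coming from Talagrand, so that after substitution one is left exactly with the announced exponent.

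The main obstacle is the careful bookkeeping around the unbounded envelope: Talagrand's inequality in its cleanest forms presupposes a uniform bound on the class, and one must insert the $B_{n}^{\xi}$-based bound so that (i) the resulting envelope contribution to the variance term is harmless, (ii) the correction $\MP(\Omega\setminus B_{n}^{\xi})$ remains a purely additive remainder, and (iii) the constants $(t+1)(t+28)$ in the exponent and the precise entropy radius $1/4$ in $\eta_{t,n}$ come out exactly as stated. Balancing the Bousquet variance and envelope contributions against one another while keeping the $\sqrt{n}\varepsilon$ scaling intact is the delicate step; everything else is either the standard concentration machinery or a direct application of Theorem~\ref{erstes Hauptresultat}.
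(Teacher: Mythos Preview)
Your overall strategy---reduce to the countable supremum, apply a Bousquet form of Talagrand, and feed in the entropy bound \eqref{estimation with integrals} at radius $1/4$---matches the paper's. But there is a genuine gap in the second step: Talagrand's inequality (Theorem~\ref{TalagrandBousquet}) is stated for a class that is \emph{deterministically} uniformly bounded by some constant $\overline{u}$. Your phrase ``on $B_{n}^{\xi}$ can be invoked with the effective uniform bound $\max_{j\leq n}\xi(Z_{j})\leq\sqrt{2n}\,\|\xi\|_{\MP^{Z},2}$'' is not a valid application: the event $B_{n}^{\xi}$ depends on the entire sample $(Z_{1},\ldots,Z_{n})$, so conditioning on it destroys the i.i.d.\ structure that Talagrand requires, and restricting the inequality to this event does not turn the random bound into a deterministic one.

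The device you are missing is a \emph{deterministic} truncation. The paper sets $w_{n}:=\sqrt{2n}\,\|\xi\|_{\MP^{Z},2}$ and $\phi_{n}(t):=(t\wedge w_{n})\vee(-w_{n})$, then applies Talagrand \emph{unconditionally} to the centered class $\{\phi_{n}(G(\theta,\cdot))-\EEE[\phi_{n}(G(\theta,Z_{1}))]:\theta\in\overline{\Theta}\}$, which is uniformly bounded by $u_{n}=(\sqrt{2n}+1)\|\xi\|_{\MP^{Z},2}$ everywhere on $\Omega$. Two things then fall out cleanly: (i) on $B_{n}^{\xi}$ the truncation is inactive for the data, so $\phi_{n}(G(\theta,Z_{j}))=G(\theta,Z_{j})$ for all $j$, and only the centering changes; (ii) the centering error $\sup_{\theta}|\EEE[\phi_{n}(G(\theta,Z_{1}))]-\EEE[G(\theta,Z_{1})]|$ is bounded by $\|\xi\|_{\MP^{Z},2}/\sqrt{2n}$ via a tail-integral computation. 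This centering error is the source of the additive slack $\|\xi\|_{\MP^{Z},2}/\sqrt{n}$ in $\eta_{t,n}$---not, as you wrote, a ``self-boundedness adjustment in the Bousquet inequality''. Finally, since $\phi_{n}$ is $1$-Lipschitz with $\phi_{n}(0)=0$, the truncated class inherits both the envelope $\xi$ and the covering-number bounds of $\FFF^{\Theta}$, which is why the same entropy integral $J(\FFF^{\Theta},\xi,1/4)$ controls $\EEE[S_{n}]$ after truncation.
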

The proof of Theorem \ref{exponential bound} is an application of Talagrand's concentration inequalities along with the estimation \eqref{estimation with integrals}. The details are worked out in the Subsection \ref{zweite Schranke}. 
\begin{remark}
\label{Simplifications}
Let us point out some simplifications of Theorem \ref{exponential bound}.
\begin{itemize}
\item [1)] If the function $G$ is uniformly bounded by some positive constant $L$, then we may choose $\xi\equiv L$. Then $\eta_{t,n} = L[1 + 32\sqrt{2} (1+t)  J(\FFF^{\Theta},\xi,1/4)]/\sqrt{n}$ and $\Omega\setminus B_{n}^{\xi} = \emptyset$ for $t > 0$ and every $n\in\NNN$.
\item [2)] If $\xi_{1}(Z_{1})$ is integrable of order $4$, we may apply Chebychev's inequality to conclude
$$
\MP\big(\Omega\setminus B_{n}^{\xi}\big) \leq {\vari[\xi(Z_{1})^{2}]\over  n~\EEE[\xi(Z_{1})^{2}]^{2}}\quad\mbox{for}~n\in\NNN.
$$ 
\item [3)]
The upper estimate of the probability $\MP\big(\Omega\setminus B_{n}\big)$ in Theorem \ref{exponential bound} may be further improved if the random variable $\exp\big(\lambda\cdot\xi^{2}\big)$ is $\MP^{Z}$-integrable for some $\lambda > 0$. In this case In this case $\xi^{2}$ as well as $\exp\big(\lambda\cdot|\xi^{2}- \EEE[\xi(Z_{1})^{2}]|\big)$ are $\MP^{Z}$-integrable too, and 
\begin{align*}
M(\xi^{2}) 
&:= 
\sup_{k\in\NNN \atop k\geq 2}\Big(\Big|\EEE\big[\big(\xi(Z_{1})^{2} - \EEE[\xi(Z_{1})^{2}]\big)^{k}\big]\Big|/k!\Big)^{1/k}\\ 
&\leq 
\sqrt{\EEE\big[\exp\big(\lambda~|\xi(Z_{1})^{2} - \EEE[\xi(Z_{1})^{2}]|\big)\big]}/\lambda 
< \infty.
\end{align*}
Then 
the inequality $\EEE\big[t~\big(\xi(Z_{1})^{2} - \EEE[\xi(Z_{1})^{2}]\big)\big]\leq \exp(2 \delta^{2} t^{2})$ holds for any $\delta\geq M(\xi^{2})$ and every $-1/(2\delta)\leq t\leq 1/(2\delta)$ (see \cite[Theorem 1.3.2]{BuldyginKozachenko2000}). Hence we may draw on Theorem 2.6 from \cite{Petrov1995} to conclude
$$
\MP\big(\Omega\setminus B_{n}^{\xi}\big) \leq  \exp\left(- n \EEE[\xi(Z_{1})^{2}]^{2}/(8 \delta^{2})\right)\vee \exp\left(- n~\EEE[\xi(Z_{1})^{2}]/(4\delta)\right)
$$
for $n\in\NNN$, and any $\delta\geq M(\xi^{2})$.
\end{itemize}
\end{remark}
\begin{remark}
In \cite[Proposition 1]{GuiguesEtAl2017} the authors derive upper estimations
\begin{align*}
&
\MP\Big(\Big\{\inf\limits_{\theta\in\Theta}~\frac{1}{n}~\sum_{j=1}^{n}G(\theta,Z_{j}) - \inf\limits_{\theta\in\Theta}\EEE\big[G(\theta,Z_{1})\big]\leq -\varepsilon/\sqrt{n}\Big\}\Big)\leq \exp(- \varepsilon^{2}~M_{l})\\
&
\MP\Big(\Big\{\inf\limits_{\theta\in\Theta}~\frac{1}{n}~\sum_{j=1}^{n}G(\theta,Z_{j}) - \inf\limits_{\theta\in\Theta}\EEE\big[G(\theta,Z_{1})\big]\geq\varepsilon/\sqrt{n}\Big\}\Big)\leq \exp(- \varepsilon^{2}~M_{u})
\end{align*}
with certain constants $M_{l}, M_{u}$ for small  positive $\varepsilon$. They assume $G$ to be convex in $\Theta$ such that the goal function of \eqref{optimization risk neutral} is differentiable, and for some $\lambda_{1} > 0$ 
$$
\sup_{\theta\in\Theta}\EEE\big[\exp\big(\lambda^{2}_{1}~\{G(\theta,\cdot) - \EEE[G(\theta,\cdot)]\}^{2}\big)\big]\leq \exp(1).
$$
In addition, with some positive number $\lambda_{2}$, they also impose an analogous condition on the  random maximal distances between the derivatives of the goal function of \eqref{optimization risk neutral} and the subgradients of the goal functions in \eqref{SAAriskneutral}. The constant $M_{l}$ in the upper estimates is dependent on $\lambda_{1}$, whereas $M_{u}$ relies on $\lambda_{1}, \lambda_{2}$ and some further auxiliary constants.
\smallskip

Theorem \ref{exponential bound} and the corresponding result in \cite{GuiguesEtAl2017} allow both to find for small deviation $\varepsilon > 0$ upper estimates of the sample size $n$ ensuring that the probability of deviations \eqref{deviations} does not exceed given levels of tolerance. The special feature of the result in \cite{GuiguesEtAl2017} is that the upper estimate of the sample size may be chosen independently of the dimension of the parameters (see \cite[Discussion 2.1.3, (3)]{GuiguesEtAl2017}). In contrast, the estimates which may be derived from Theorem \ref{exponential bound} rely on the integrals $J(\FFF^{\Theta},\xi,1/4)$ which often increase with the dimension (see e.g. Propositions \ref{Hoelder-Bedingung auxiliary} , \ref{startingpoint} below).

\end{remark}

\medskip

There exists some interesting link between the probability deviations \eqref{deviations II} and the tail functions for the absolute error of the solutions of the SAA \eqref{SAAriskneutral}. In \cite{Pflug1999} this was pointed out the first time, and the relationship was further systemized in \cite{Roemisch2003}. To go into more detail let us consider any $m$-dimensional random vector $\widehat{\theta}_{n}$ which is a (random) solution of the SAA-optimization problem \eqref{SAAriskneutral} with sample size $n$. Furthermore, let $\psi_{\Theta}: \Theta\rightarrow\RRR$ be defined by $\psi_{\Theta}(\theta) = \EEE[G(\theta,Z_{1})]$. 
\begin{itemize}
\item If $\psi_{\Theta}$ is lower semicontinuous with a unique minimal point $\theta^{*}$,  and if it satisfies some specific growth condition, then under (A 3) there is some constant $L$ such that 
$$
\MP\big(\big\{\|\widehat{\theta}_{n} - \theta^{*}\| > \varepsilon\big\}\big)\leq 
\MP\Big(\Big\{\sup\limits_{\theta\in\overline{\Theta}}\Big|~\frac{1}{n}~\sum_{j=1}^{n}G(\theta,Z_{j}) - \EEE\big[G(\theta,Z_{1})\big]\Big| > L ~\varepsilon^{2}/2\Big\}\Big)
$$
for any $\varepsilon > 0$ (see \cite[p. 66]{Pflug1999}).
\item If $G$ is random lower semicontinuous, if $\Psi_{\Theta}$ is Lipschitz continuous, and if (A 3) is satisfied, then the set $S(\psi_{\Theta})$ of minimizers of $\psi_{\Theta}$ is nonvoid, and there is some unbounded strictly increasing mapping $\varphi_{\Theta}: [0,\infty[\rightarrow [0,\infty[$ with $\varphi_{\Theta}(0) = 0$ such that
\begin{align*}
&
\MP\big(\big\{\inf_{\theta\in S(\psi_{\Theta})}\|\widehat{\theta}_{n} - \theta\| > \varepsilon/\sqrt{n}\big\}\big)\\
&\leq 
\MP\Big(\Big\{\sup\limits_{\theta\in\overline{\Theta}}\Big|~\frac{1}{n}~\sum_{j=1}^{n}G(\theta,Z_{j}) - \EEE\big[G(\theta,Z_{1})\big]\Big| > \varphi_{\Theta}(\varepsilon/\sqrt{n})\Big\}\Big)\quad\mbox{for}~\varepsilon > 0
\end{align*}
(see \cite[Theorem 50]{Roemisch2003}).
\end{itemize}  
In view of these results the second inequality in Theorem \ref{exponential bound} might be utilized to derive upper estimates for the tail function of the absolute error of 
$\widehat{\theta}_{n}$ dependent on the sample size $n$. This was already recognized in the contributions \cite{Pflug1999} and \cite{Roemisch2003}, referring to concentration inequalities in \cite{Talagrand1994}, where however unspecified univeral constants are used. 
\bigskip

As an easy consequence of Theorem \ref{exponential bound} we may provide the following simple criterion to ensure uniform tightness of the sequence 
$$
\Big(\sqrt{n}\Big[\inf_{\theta\in\Theta}~\frac{1}{n}~\sum_{j=1}^{n}G(\theta,Z_{j}) - \inf_{\theta\in\Theta}\EEE\big[G(\theta,Z_{1})\big]\Big]\Big)_{n\in\NNN}.
$$
The new point is that we do not require the paths $G(\cdot,z)$ to satisfy certain analytical properties in advance, as often in the literature on the SAA method (e.g. in \cite{ShapiroEtAl} or \cite{EichhornRoemisch2007}).
\begin{theorem}
\label{tightness}
Let (A 1) - (A 3) be fulfilled with $\xi$ from (A 2) being square $\MP^{Z}$-integrable. 
Using notation \eqref{Entropie-Integral I}, if $J\big(\FFF^{\Theta},\xi,1/2\big)$ is finite,
then the sequence 
$$
\Big(\sqrt{n}\Big[\inf_{\theta\in\Theta}~\frac{1}{n}~\sum_{j=1}^{n}G(\theta,Z_{j}) - \inf_{\theta\in\Theta}\EEE\big[G(\theta,Z_{1})\big]\Big]\Big)_{n\in\NNN}.
$$
is uniformly tight.
\end{theorem}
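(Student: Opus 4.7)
The plan is to deduce Theorem \ref{tightness} directly from the exponential bound in Theorem \ref{exponential bound}, reading the probabilities at the scale $\varepsilon = M/\sqrt{n}$ and letting $M \to \infty$. Since $\eta_{t,n}$ is of order $1/\sqrt{n}$, this rescaling is exactly matched to the range of validity of the exponential bound, which is the reason the statement holds.

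In detail, I would fix once and for all $t > 0$ (for instance $t=1$) and set
\[
K_t := \|\xi\|_{\MP^{Z},2} + 32\sqrt{2}(1+t)\|\xi\|_{\MP^{Z},2}\, J(\FFF^{\Theta},\xi,1/4),
\]
which is finite because $J(\FFF^{\Theta},\xi,1/2) < \infty$ implies $J(\FFF^{\Theta},\xi,1/4) < \infty$. For $M > K_t$ and $n \geq \|\xi\|_{\MP^{Z},2}^{2}/2$, the choice $\varepsilon := M/\sqrt{n}$ satisfies $\varepsilon > \eta_{t,n}$ since $\sqrt{n}\,\eta_{t,n} = K_t$. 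Theorem \ref{exponential bound} therefore applies and yields
\[
\MP\Big(\sqrt{n}\big|\inf_{\theta\in\Theta}\tfrac{1}{n}\!\sum_{j=1}^{n}G(\theta,Z_j) - \inf_{\theta\in\Theta}\EEE[G(\theta,Z_1)]\big| \geq M\Big) \leq \exp\!\big(-c_t\, M\big) + \MP(\Omega\setminus B_n^{\xi}),
\]
with the constant $c_t := t^{2}/\big(8(t+1)(t+28)\|\xi\|_{\MP^{Z},2}\big) > 0$ independent of $n$ and $M$.

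Next I would dispose of the residual term $\MP(\Omega\setminus B_n^{\xi})$. Since $\xi$ is square $\MP^{Z}$-integrable, the sequence $(\xi(Z_j)^{2})_{j\in\NNN}$ is i.i.d.\ and integrable, so by the (weak or strong) law of large numbers $\tfrac{1}{n}\sum_{j=1}^{n}\xi(Z_j)^{2} \to \EEE[\xi(Z_1)^{2}]$ in probability, and consequently $\MP(\Omega\setminus B_n^{\xi}) \to 0$ as $n \to \infty$.

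Combining both pieces, taking $\limsup$ in $n$ and then $M \to \infty$, I obtain
\[
\lim_{M \to \infty}\, \limsup_{n \to \infty}\, \MP\Big(\sqrt{n}\big|\inf_{\theta\in\Theta}\tfrac{1}{n}\!\sum_{j=1}^{n}G(\theta,Z_j) - \inf_{\theta\in\Theta}\EEE[G(\theta,Z_1)]\big| \geq M\Big) = 0,
\]
which is exactly uniform tightness. An additional remark: for the finitely many small indices $n < \|\xi\|_{\MP^{Z},2}^{2}/2$ not covered by the bound, uniform tightness is automatic since any finite collection of random variables is trivially tight. There is no genuine obstacle in the argument; the only point to check carefully is that the threshold $\eta_{t,n}$ scales as $1/\sqrt{n}$ with a \emph{constant} independent of $n$, which is the structural content of Theorem \ref{exponential bound} that makes the rescaling $\varepsilon = M/\sqrt{n}$ admissible for all $M$ large enough.
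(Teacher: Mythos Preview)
Your proposal is correct and follows essentially the same approach as the paper: apply Theorem \ref{exponential bound} at the rescaled level $\varepsilon = M/\sqrt{n}$, use that $\sqrt{n}\,\eta_{t,n}$ is a constant independent of $n$, and eliminate $\MP(\Omega\setminus B_n^{\xi})$ via the law of large numbers. Your explicit handling of the finitely many indices $n < \|\xi\|_{\MP^{Z},2}^{2}/2$ is a minor addition the paper leaves implicit.
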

\begin{proof}
Fix any $n\in\NNN$ with $n\geq \|\xi\|_{\MP^{Z},2}^{2}/2$. Then with $B_{n}^{\xi}$ as defined in \eqref{Restmenge} the application of Theorem \ref{exponential bound} yields
\begin{align*}
&
\MP\Big(\Big\{\sqrt{n}\Big|\inf\limits_{\theta\in\Theta}~\frac{1}{n}~\sum_{j=1}^{n}G(\theta,Z_{j}) - \inf\limits_{\theta\in\Theta}\EEE\big[G(\theta,Z_{1})\big]\Big|\geq\varepsilon\Big\}\Big)\\
&
\leq 
\exp\left(\frac{-t^{2}~\varepsilon}{8 (t + 1) (5 t + 28) \|\xi\|_{\MP^{Z},2}}\right)
+ \MP\big(\Omega\setminus B_{n}^{\xi}\big)
\end{align*}
for $t > 0$ and every $\varepsilon > \|\xi\|_{\MP^{Z},2} + 32\sqrt{2} (1+t) \|\xi\|_{\MP^{Z},2} J(\FFF^{\Theta},\xi,1/4)$. Furthermore we have convergence $\MP\big(\Omega\setminus B_{n}^{\xi}\big)\to 0$ by the law of large numbers. Thus
$$
\lim_{\varepsilon\to\infty}\limsup_{n\to\infty}\MP\Big(\Big\{\sqrt{n}\Big|\inf\limits_{\theta\in\Theta}~\frac{1}{n}~\sum_{j=1}^{n}G(\theta,Z_{j}) - \inf\limits_{\theta\in\Theta}\EEE\big[G(\theta,Z_{1})\big]\Big|\geq\varepsilon\Big\}\Big) = 0
$$
which completes the proof.
\end{proof}
All the results within this section crucially require $J(\FFF^{\Theta},\xi,1/2)$ to be finite. This property is always satisfied if the involved covering numbers have polynomial rates. Indeed this relies on the observation, that by using change of variable formula several times along with integration by parts, we obtain
\begin{equation}
\label{Integralabschaetzung}
\int_{0}^{1}\sqrt{v\ln(K/\varepsilon)}~d\varepsilon\leq 2\sqrt{v \ln(K)}\quad\mbox{for}~v\geq 1, K\geq e.
\end{equation}
Inequality \eqref{Integralabschaetzung} may be applied 
if there exist $K\geq e, v\geq 1$ such that the following condition is satisfied
$$
N\big(\varepsilon~\|C_{\FFF^{\Theta}}\|_{\MQ,2},\FFF^{\Theta},L^{2}(\MQ)\big)\big)\leq (K/\varepsilon)^{v}\quad\mbox{for}~\MQ\in\cM_{\textrm{\tiny fin}}\quad\mbox{and}~\varepsilon\in ]0,1[.
$$
In the rest of this section we shall utilize \eqref{Integralabschaetzung} to give explicit upper estimates of the terms $J(\FFF^{\Theta}, C_{\FFF^{\Theta}},\delta)$ if the objective $G$ satisfies specific analytical properties. 
\medskip

Denoting the Euclidean metric on $\RRR^{m}$ by $d_{m,2}$, we start with the following condition
\begin{enumerate}
\item[(H)] There exist some $\beta\in ]0,1]$ and a square $\MP^{Z}$-integrable strictly positive mappings $C:\RRR^{d}\rightarrow ]0,\infty[$ such that 
$$
\big|G(\theta,z) - G(\vartheta,z)\big|\leq C(z)~d_{m,2}(\theta,\vartheta)^{\beta}\quad\mbox{for}~z\in\RRR^{d}, \theta, \vartheta\in\Theta.
$$
\end{enumerate}
Under (H) explicit upper estimates for the terms $J(\FFF^{\Theta},\xi,\delta)$ are provided by the following result.
\begin{proposition}
\label{Hoelder-Bedingung auxiliary}
Let condition (H) be fulfilled with $\beta\in ]0,1]$ and square $\MP^{Z}$-integrable strictly positive mapping $C$. Furthermore, let $G(\theta,\cdot)$ be Borel measurable for every $\theta \in\Theta$. In addition let $\Delta(\Theta)$ stand for the diameter of $\Theta$ w.r.t. the Euclidean metric $d_{m,2}$. Then requirement (A 3) is met. Moreover, in case of $\Delta(\Theta) > 0$, if $G(\overline{\theta},\cdot)$ is square $\MP^{Z}$-integrable for some $\overline{\theta}\in\Theta$, the mapping $\xi := C~\Delta(\Theta)^{\beta} + |G(\overline{\theta},\cdot)|$ is square $\MP^{Z}$-integrable, satisfying property (A 2) and 
$$
\sup_{\MQ\in \cM_{\textrm{\tiny fin}}}N\big(\varepsilon \|\xi\|_{\MQ,2},\FFF^{\Theta},L^{2}(\MQ)\big)\leq \big(8 + \varepsilon^{1/\beta}\big)^{m}/\varepsilon^{m/\beta}\quad\mbox{for}~\varepsilon > 0.
$$
In particular
$$
J(\FFF^{\Theta},\xi,\delta)\leq 2\delta\sqrt{(3m + 1)\ln(2) + \frac{m}{\beta}\ln(2/\delta)}\quad\mbox{for}~\delta \in ]0,1/2].
$$
\end{proposition}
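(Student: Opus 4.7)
The plan is to handle the four assertions of the proposition (validity of (A 3), the envelope property (A 2) together with square-integrability of $\xi$, and the explicit bound on $J$) in sequence. Compactness of $\Theta\subset\RRR^m$ implies separability, so fix any countable dense subset $\overline{\Theta}\subseteq\Theta$. For $\theta\in\Theta$ and any $(\vartheta_k)\subseteq\overline{\Theta}$ with $d_{m,2}(\vartheta_k,\theta)\to 0$, condition (H) gives the pointwise estimate $|G(\vartheta_k,z)-G(\theta,z)|\le C(z)\,d_{m,2}(\vartheta_k,\theta)^\beta\to 0$, which verifies the second identity in (A 3) pointwise (so the null sets $N_n$ may be taken empty). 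Integrating and using $C\in L^1(\MP^Z)$ (a consequence of square integrability on a probability space) then yields $|\EEE[G(\vartheta_k,Z_1)]-\EEE[G(\theta,Z_1)]|\le\EEE[C(Z_1)]\,d_{m,2}(\vartheta_k,\theta)^\beta\to 0$, hence the first identity as well. The triangle inequality combined with (H) produces $|G(\theta,z)|\le|G(\overline{\theta},z)|+C(z)\Delta(\Theta)^\beta=\xi(z)$, so $\xi$ is a strictly positive and square-$\MP^Z$-integrable envelope, settling (A 2).

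Next, for the covering number estimate, fix $\MQ\in\cM_{\textrm{\tiny fin}}$; condition (H) gives $\|G(\theta,\cdot)-G(\vartheta,\cdot)\|_{\MQ,2}\le\|C\|_{\MQ,2}\,d_{m,2}(\theta,\vartheta)^\beta$, so any $\eta$-net in $(\Theta,d_{m,2})$ produces a $\|C\|_{\MQ,2}\eta^\beta$-net in $(\FFF^\Theta,L^2(\MQ))$. Taking $\eta$ with $\|C\|_{\MQ,2}\eta^\beta=\varepsilon\|\xi\|_{\MQ,2}$ and using $\xi\ge C\,\Delta(\Theta)^\beta$ (which gives $\|\xi\|_{\MQ,2}\ge\|C\|_{\MQ,2}\Delta(\Theta)^\beta$) yields $\eta\ge\varepsilon^{1/\beta}\Delta(\Theta)$. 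Combining this with the standard volume bound $N(\eta,\Theta,d_{m,2})\le(3\Delta(\Theta)/\eta)^m$ for $\eta\le\Delta(\Theta)$ (and the trivial $N\le 1$ otherwise) delivers $N(\varepsilon\|\xi\|_{\MQ,2},\FFF^\Theta,L^2(\MQ))\le 3^m\varepsilon^{-m/\beta}$ uniformly in $\MQ\in\cM_{\textrm{\tiny fin}}$ for $\varepsilon\in]0,1]$.

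Feeding this into the definition of $J$ produces $J(\FFF^\Theta,\xi,\delta)\le\int_0^\delta\sqrt{\ln 2+m\ln 3+(m/\beta)\ln(1/\varepsilon)}\,d\varepsilon$. The substitution $\varepsilon=\delta u$ followed by subadditivity $\sqrt{a+b}\le\sqrt{a}+\sqrt{b}$ decouples this into $\delta\sqrt{B'}+\delta\sqrt{m/\beta}\int_0^1\sqrt{\ln(1/u)}\,du$ with $B':=\ln 2+m\ln 3+(m/\beta)\ln(1/\delta)$. The Gamma-function identity $\int_0^1\sqrt{\ln(1/u)}\,du=\Gamma(3/2)=\sqrt{\pi}/2$ then gives the upper bound $\delta(\sqrt{B'}+\sqrt{(m/\beta)\pi}/2)$. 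Writing $T:=(3m+1)\ln 2+(m/\beta)\ln(2/\delta)$ for the target, two elementary numerical checks close the proof: $T\ge B'$ reduces to $(3+1/\beta)\ln 2\ge\ln 3$, which is valid for $\beta\in]0,1]$; and $T\ge(m/\beta)\pi/4$ follows from $T\ge(m/\beta)\ln(2/\delta)\ge 2(m/\beta)\ln 2$ (using $\delta\le 1/2$) together with $8\ln 2>\pi$. Summing the resulting square-root inequalities gives $\sqrt{B'}+\sqrt{(m/\beta)\pi}/2\le 2\sqrt{T}$, as required.

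The main obstacle is this last calibration. A direct application of the paper's inequality \eqref{Integralabschaetzung} with $v=m/\beta$ and $\ln K=\beta(\ln 2+m\ln 3)/m$ requires $K\ge\delta e$, which fails in a narrow intermediate regime of $\beta$ close to $0$ and $\delta$ close to $1/2$; the subadditivity split above sidesteps this case analysis and makes transparent that the explicit constant $(3m+1)\ln 2$ in the target is precisely what absorbs the $\Gamma(3/2)$ contribution uniformly in $\beta\in]0,1]$ and $\delta\in]0,1/2]$.
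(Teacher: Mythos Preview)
Your argument is correct, and it deviates from the paper's proof mainly in the covering-number step and the ensuing integral estimate. The paper imports the bound $N(\Delta(\Theta)\eta^{1/\beta},\Theta,d_{m,2})\le (8+\eta^{1/\beta})^{m}/\eta^{m/\beta}$ from van de Geer, which after the change of variables $\varepsilon=\delta u$ yields the form $\sqrt{v\ln(K/u)}$ with $K=2^{[(3m+1)\beta+m]/m}/\delta$; the point of the constant $8$ (rather than your $3$) is precisely that this $K$ satisfies $K\ge 4>e$ for all $\beta\in]0,1]$ and $\delta\le 1/2$, so inequality \eqref{Integralabschaetzung} applies directly and the target constant drops out with no further work. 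You instead use the sharper elementary volume bound $(3\Delta(\Theta)/\eta)^{m}$, which---as you correctly diagnose---blocks a uniform application of \eqref{Integralabschaetzung}; you then recover the stated constant by the subadditivity split $\sqrt{a+b}\le\sqrt a+\sqrt b$, the explicit value $\int_0^1\sqrt{\ln(1/u)}\,du=\sqrt{\pi}/2$, and two clean numerical inequalities. Both routes are short; yours is more self-contained (no external covering lemma) at the price of the calibration step at the end, while the paper's is cleaner in the integral evaluation at the price of quoting a slightly looser covering bound whose constant is engineered to make \eqref{Integralabschaetzung} fire uniformly.
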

For the proof see Subsection \ref{Beweis Hoelder Hilfsresultat}.
\begin{remark}
Proposition \ref{Hoelder-Bedingung auxiliary} tells us that under (H) the Theorems \ref{exponential bound}, \ref{tightness} carry over immediately, using the estimates from Proposition \ref{Hoelder-Bedingung auxiliary}.
\end{remark}
Next, let us consider objective $G$ having the following kind of structure of piecewise H\"older continuity. 
\begin{enumerate}
\item [(PH)] $
G(\theta,z) = \sum\limits_{i = 1}^{r}\left(\min_{l=1,\dots,s_{i}}
\eins_{I_{il}}\big(\Lambda_{i,l}(\theta, z) + a^{i}_{l}\big)\right)\cdot 
G^{i}(\theta, z),
$ 
where
\begin{itemize}
\item $r, s_{1},\dots,s_{r}\in\NNN$,
\item $G^{i}$ satisfies (A 1), and (H) with $\beta_{i}\in ]0,1]$ as well as strictly positive square $\MP^{Z}$-integrable $C_{i}:\RRR^{d}\rightarrow\RRR$ for 
$i\in\{1,\ldots,r\}$,
\item $\Lambda_{il}:\RRR^{m}\times\RRR^{d}\rightarrow\RRR$ Borel measurable with $\Lambda_{il}(\cdot,z)$ affine linear for $z\in\RRR^{d}$ ($i\in\{1,\ldots,r\}$, $l\in\{1,\ldots,s_i\}$),
\item $a^{i}_{l}\in\RRR$ for $i\in\{1,\dots,r\}, l\in\{1,\dots,s_{i}\}$,
\item $I_{il} = ]0,\infty[$ or $I_{il} = [0,\infty[$ for $i\in\{1,\dots,r\}$ and $l\in\{1,\dots,s_{i}\}$,
\item The set
$$
\Big\{\bigcap\limits_{l=1}^{s_{i}}\big\{\Lambda_{il}(\theta, \cdot) + a^{i}_{l}\in I_{il}\}\mid i\in\{1,\ldots,r\}, l\in\{1,\ldots,s_{i}\big\}\Big\}
$$
is a partition of $\RRR^{d}$.
\end{itemize}
\end{enumerate}
In two stage mixed-integer programs the goal functions typically may be represented in this way if the random vector $Z$ has compact support (see \cite[p. 121]{EichhornRoemisch2007}). More precisely, under the conditions of relative complete recourse and dual feasibility, we may find $p, s\in\NNN$, affine linear mappings $h:\RRR^{d}\rightarrow\RRR^{p}$, $\Lambda:\RRR^{d}\rightarrow\RRR^{p\times m}$, linear mappings $T:\RRR^{m}\rightarrow\RRR$, $L^{i}_{l}: \RRR^{p}\rightarrow\RRR$, and Lipschitz-continuous mappings $\varphi_{i}:\RRR^{p}\rightarrow\RRR$ satisfying
\begin{itemize}
\item $s_{i} = s$ for $i=1,\ldots,r$,
\item $G^{i}(\theta,z) = T(\theta) + \varphi_{i}\big(h(z) - \Lambda(z)\cdot\theta\big)$ for $i\in\{1,\ldots,r\}$,
\item $\Lambda_{il}(\theta,z) = L^{i}_{l}\big(- h(z) + \Lambda(z)\cdot\theta\big)$ for $i\in\{1,\ldots,r\}$, $l\in\{1,\ldots,s_{i}\}$.
\end{itemize}
Based upon this representation of $G$ the authors in \cite{EichhornRoemisch2007} not only show uniform tightness of the sequence in Theorem \ref{tightness} but even derive asymptotic distributions. Their line of reasoning relies also  on covering numbers for the class $\FFF_{\Theta}$ and finiteness of the integrals in \eqref{Entropie-Integral I}.
\medskip

Note that $G$ satisfying condition (PH) does not have continuity in $\theta$ in advance.
\par
For abbreviation we set 
$f^{i}(\theta,z) := \min_{l=1,\dots,s_{i}}\eins_{I_{il}}\big(\Lambda_{il}(\theta, z) + a^{i}_{l}\big)$
for $i\in\{1,\ldots,r\}$, and we introduce the associated function classes
$$
\FFF_{\textrm{\tiny PH}}^{i} := \big\{f^{i}(\theta,\cdot)\mid \theta\in\Theta\big\}\quad\mbox{and}\quad\overline{\FFF}_{\textrm{\tiny PH}}^{i} := \big\{G^{i}(\theta,\cdot)\mid \theta\in\Theta\big\}\quad i\in\{1,\ldots,r\}.
$$
Note that the classes $\FFF_{\textrm{\tiny PH}}^{i}$ are uniformly bounded by $1$. The following result gives an upper estimate of the terms $J(\FFF^{\Theta},\xi,\delta)$.
\begin{proposition}
\label{startingpoint} 
Let $\Delta(\Theta)$ denote the diameter of $\Theta$ w.r.t. the Euclidean metric. 
The mappings $f^{i}(\theta,\cdot)$ and $G^{i}(\theta,\cdot)$ are Borel measurable for $\theta\in\Theta$ and $i\in\{1,\ldots,r\}$. In particular assumption (A 1) holds. Moreover, in case of $\Delta(\Theta) > 0$, if $G^{1}(\overline{\theta},\cdot),\ldots,G^{r}(\overline{\theta},\cdot)$ are square $\MP^{Z}$-integrable for some $\overline{\theta}\in\Theta$, and if $\xi_{1},\ldots,\xi_{r}$ denote bounded positive envelopes of $\FFF_{\textrm{\tiny PH}}^{1},\ldots,\FFF_{\textrm{\tiny PH}}^{r}$ respectively, then $\xi := \sum_{i=1}^{r}\xi_{i}\cdot\big(\Delta(\Theta)^{\beta_{i}}~C_{i}(\cdot) + |G^{i}(\overline{\theta},\cdot)|\big)$ is square $\MP^{Z}$-integrable satisfying (A 2) and 
\begin{align*}
&
J(\FFF^{\Theta},\xi,\delta)
\\
&
\leq 2\delta\sqrt{r + 2 r~m~\ln(3) +  m \ln(4 r/\delta) \sum_{i=1}^{r} 1/\beta_{i} + \ln(2) + [5 + 2 \ln(4 r/\delta)]~ (m + 2) \sum_{i=1}^{r}s_{i}}
\end{align*}
for $\delta\in ]0,1]$.
\end{proposition}
The involved proof is delegated to Subsection \ref{proof of Proposition starting point}.
\begin{remark}
\label{Remark zu PL}
In view of Proposition \ref{startingpoint} the only critical condition left is (A 3) in order to apply our main results. If $G^{1}(\overline{\theta},\cdot),\ldots,G^{r}(\overline{\theta},\cdot)$ are $\MP^{Z}$-integrable for some $\overline{\theta}\in\Theta$, it is a routine excercise to show that (A 3) may be guaranteed e.g. for any at most countable dense subset $\overline{\Theta}\subseteq\Theta$ by the following property.
\smallskip

\noindent
\begin{itemize}
\item [(*)] $\left\{z\in\RRR^{d}\mid \Lambda_{il}(\theta,z) = - a^{i}_{l}~\mbox{for some}~ \theta\in\Theta\right\}$ is contained in a $\MP^{Z}$-null set for $i = 1,\ldots,r$ and $l\in\{1,\ldots,s_{i}\}$ with $I_{il} = [0,\infty[$. 
\end{itemize}
\smallskip

\noindent
For the application of the main results we may invoke the estimates from Proposition \ref{startingpoint}.
\end{remark}
\section{Error estimates under mean upper semideviations}
\label{upper semideviations}
Let $L^p(\Omega,\cF,\pr)$ denote the usual $L^{p}$-space on $\OFP$ ($p\in [0,\infty[$), where we tacitely identify random variables which are different on $\MP$-null sets only. The space $L^{p}\OFP$ is endowed with the usual $L^{p}$-norm $\|\cdot\|_{p}$.
\par
We want to study the risk averse stochastic program (\ref{optimization risk averse}), where  
in the objective the functional $\rho$ is a \textit{mean upper semideviation} also known as \textit{upper semideviation-corrected expectation}. This means that for $p\in [1,\infty[$ and $a\in ]0,1]$ the functional $\rho = \rho_{p,a}$ is defined as follows
$$
\rho_{p,a}:L^{p}\OFP\rightarrow\RRR,~X\mapsto\EEE[X] + a \|\big(X - \EEE[X]\big)^{+}\|_{p}.
$$
The number $p$ is the order of the mean upper semideviation and $a$ is the weight of the correction from the expecation. It is well-known that mean upper semideviations are increasing w.r.t. the increasing convex order (cf. e.g. \cite[Theorem 6.51 along with Example 6.23 an Proposition 6.8]{ShapiroEtAl}). They are also distribution-invariant so that we may define the associated functional $\cR_{p,a}$ on the set of distributions functions of random variables with absolute moments of order $p$. So the subject of this section is the optimization problem 
\begin{equation*}
\inf_{\theta\in\Theta}\cR_{p,a}\big(F_{\theta}\big),
\end{equation*}
where $F_{\theta}$ stands for the distribution function of $G(\theta,Z)$ for 
$\theta\in\Theta$. Introducing the notation 
\begin{equation}
\label{Hilfsgoals}
G_{p}:\Theta\times\RRR^{d}\rightarrow\RRR,~(\theta,z)\mapsto \left[\big(G(\theta,z) - \EEE[G(\theta,Z_{1})]\big)^{+}\right]^{p}\quad (p\in [1,\infty[).
\end{equation}
we may describe this optimization also in the following way
\begin{equation}
\label{optimization general III}
\inf_{\theta\in\Theta}\cR_{\rho_{p,a}}\big(F_{\theta}\big) = 
\inf_{\theta\in\Theta}\left\{\EEE[G(\theta,Z_{1})] + a\Big(\EEE[G_{p}(\theta,Z_{1})]\Big)^{1/p}\right\}.
\end{equation}
Then the stochastic objective of the approximative problem according to the SAA method has the following representation.
\begin{equation}
\label{SAA upper semideviations}
\cR_{\rho_{p,a}}\big(\hat{F}_{n,\theta}\big) = 
\Big\{\frac{1}{n}\sum_{j=1}^{n}G(\theta,Z_{j}) + a\Big(\frac{1}{n}\sum_{j=1}^{n}\big(\big[G(\theta,Z_{j}) - \frac{1}{n}\sum_{i=1}^{n}G(\theta,Z_{i})\big]^{+}\big)^{p}\Big)^{1/p}\Big\}
\end{equation}
We shall look at bounds for the deviation probabilities \eqref{Kreuz} w.r.t. $\cR_{\rho_{p,a}}$.
It is intended to utilize results for risk neutral case presented in Section \ref{error risk neutral}. The key is the following observation based on the notation \eqref{Hilfsgoals}.
\begin{lemma}
\label{Rueckfuehrung}
Let (A 1) be fulfilled, and let $\xi$ be an envelope of $\FFF^{\Theta}$ which is $\MP^{Z}$-integrable of order $p\in [1,\infty[$. Then the optimal values of the problems \eqref{optimization general III} and \eqref{SAA upper semideviations} are finite. Moreover, 
for any nonvoid subset $\overline{\Theta}\subseteq\Theta$ and arbitrary $n\in\NNN$, 
$\varepsilon > 0$ as well as  $a\in ]0,1]$
\begin{align*}
\Big\{\big|\inf_{\theta\in\overline{\Theta}}\cR_{\rho_{p,a}}\big(\hat{F}_{n,\theta}\big) - \inf_{\theta\in\overline{\Theta}}\cR_{\rho_{p,a}}\big(F_{\theta}\big)\big|\geq\varepsilon\Big\}
\subseteq D_{n,\varepsilon,a}^{\overline{\Theta}}\cup \overline{D}_{n,\varepsilon,p,a}^{\overline{\Theta}},
\end{align*}
holds, where
\begin{align*}
&
D_{n,\varepsilon,a}^{\overline{\Theta}} := \Big\{\sup_{\theta\in\overline{\Theta}}\big|\frac{1}{n}\sum_{j=1}^{n}G(\theta,Z_{j}) - \EEE[G(\theta,Z_{1})]\big|\geq\varepsilon/(2+2a)\Big\},
\\
&
\overline{D}_{n,\varepsilon,p,a}^{\overline{\Theta}} := \Big\{\sup_{\theta\in\overline{\Theta}}\big|\frac{1}{n}\sum_{j=1}^{n}G_{p}(\theta,Z_{j}) - \EEE[G_{p}(\theta,Z_{1})]\big|\geq\big(\varepsilon/[2 a]\big)^{p}\Big\}.
\end{align*}
\end{lemma}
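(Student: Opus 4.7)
The plan is to reduce everything to the standard fact that $\lvert\inf_\theta f(\theta) - \inf_\theta g(\theta)\rvert \le \sup_\theta\lvert f(\theta)-g(\theta)\rvert$, and then to estimate the pointwise difference $\lvert\cR_{\rho_{p,a}}(\hat F_{n,\theta}) - \cR_{\rho_{p,a}}(F_\theta)\rvert$ in terms of two empirical fluctuations: one for $G$ itself, and one for $G_p$.

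Finiteness is immediate: by (A1) and the $p$-integrability of $\xi$ with $p\ge 1$, we have $|G(\theta,\cdot)|\le\xi\in L^{1}(\MP^{Z})\cap L^{p}(\MP^{Z})$, so $\EEE[G(\theta,Z_{1})]$ is finite and $G_{p}(\theta,Z_{1})\le (2\xi(Z_{1}))^{p}$ is $\MP^{Z}$-integrable; the empirical counterparts are finite by the same bounds. Writing $M(\theta):=\EEE[G(\theta,Z_{1})]$ and $M_{n}(\theta):=\tfrac1n\sum_{j=1}^{n}G(\theta,Z_{j})$, the representation \eqref{optimization general III}--\eqref{SAA upper semideviations} gives
\[
\cR_{\rho_{p,a}}(\hat F_{n,\theta})-\cR_{\rho_{p,a}}(F_{\theta})=\bigl(M_{n}(\theta)-M(\theta)\bigr)+a\bigl(S_{n}(\theta)-S(\theta)\bigr),
\]
where $S_{n}(\theta):=\bigl(\tfrac1n\sum_{j=1}^{n}[(G(\theta,Z_{j})-M_{n}(\theta))^{+}]^{p}\bigr)^{1/p}$ and $S(\theta):=(\EEE[G_{p}(\theta,Z_{1})])^{1/p}$.

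The main obstacle is the difference $S_{n}(\theta)-S(\theta)$, which involves a data-dependent centering. I would split it via the auxiliary quantity $\widetilde S_{n}(\theta):=\bigl(\tfrac1n\sum_{j=1}^{n}G_{p}(\theta,Z_{j})\bigr)^{1/p}$ (true centering) and bound each piece separately. For $S_{n}-\widetilde S_{n}$, use the elementary inequality $\lvert u^{+}-v^{+}\rvert\le\lvert u-v\rvert$ to obtain $\lvert(G(\theta,Z_{j})-M_{n}(\theta))^{+}-(G(\theta,Z_{j})-M(\theta))^{+}\rvert\le\lvert M_{n}(\theta)-M(\theta)\rvert$, then apply Minkowski's inequality in $L^{p}$ of the empirical measure to conclude $\lvert S_{n}(\theta)-\widetilde S_{n}(\theta)\rvert\le\lvert M_{n}(\theta)-M(\theta)\rvert$. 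For $\widetilde S_{n}-S$, use the sub-additivity $\lvert x^{1/p}-y^{1/p}\rvert\le\lvert x-y\rvert^{1/p}$ valid for $x,y\ge 0$ and $p\ge 1$, giving $\lvert\widetilde S_{n}(\theta)-S(\theta)\rvert\le\bigl\lvert\tfrac1n\sum_{j=1}^{n}G_{p}(\theta,Z_{j})-\EEE[G_{p}(\theta,Z_{1})]\bigr\rvert^{1/p}$.

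Putting these estimates together yields the pointwise bound
\[
\bigl\lvert\cR_{\rho_{p,a}}(\hat F_{n,\theta})-\cR_{\rho_{p,a}}(F_{\theta})\bigr\rvert\le(1+a)\,\lvert M_{n}(\theta)-M(\theta)\rvert+a\,\Bigl\lvert\tfrac1n\textstyle\sum_{j=1}^{n}G_{p}(\theta,Z_{j})-\EEE[G_{p}(\theta,Z_{1})]\Bigr\rvert^{1/p},
\]
which passes to the supremum over $\overline\Theta$. To finish, observe that if the left-hand side of the claimed event holds (after invoking $\lvert\inf-\inf\rvert\le\sup\lvert\cdot\rvert$), then one of the two summands on the right must be at least $\varepsilon/2$; the first case is exactly $D_{n,\varepsilon,a}^{\overline\Theta}$ (dividing by $1+a$ and noting $1/(1+a)=1/(2+2a)\cdot 2$ matches $\varepsilon/(2+2a)$), and the second, after raising to the $p$-th power, is exactly $\overline D_{n,\varepsilon,p,a}^{\overline\Theta}$. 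This yields the claimed inclusion.
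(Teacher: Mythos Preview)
Your proof is correct and follows essentially the same route as the paper's: bound $|\inf-\inf|$ by the sup of pointwise differences, split $S_n-S$ through the intermediate $\widetilde S_n$ with the true centering, control $S_n-\widetilde S_n$ via $|u^+-v^+|\le|u-v|$ and Minkowski, control $\widetilde S_n-S$ via $|x^{1/p}-y^{1/p}|\le|x-y|^{1/p}$, then split the $\varepsilon$-event. The paper compresses the Minkowski/centering step into a single displayed inequality without naming $\widetilde S_n$, but the argument is identical.
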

The proof may be found in Subsection \ref{Beweise upper semideviations}. 
\medskip

In the next step we want to reduce simultaneously the optimization problems \eqref{optimization general III} and \eqref{SAA upper semideviations} to at most countable parameter subsets of $\Theta$. This will be achieved by the following assumption which strengthens (A 3). 
\begin{itemize}
\item [(A 3')] There exist some at most countable subset $\overline{\Theta}\subseteq\Theta$ and $(\MP^{Z})^{n}$-null sets $N_{n}$ $(n\in\NNN)$ such that for $z_{1},\ldots,z_{n}\in\RRR^{dn}\setminus N_{n}$ and $\theta\in\Theta$
$$
\inf_{\vartheta\in\overline{\Theta}}\Big\{\EEE[|G(\vartheta,Z_{1}) - G(\theta,Z_{1})|] +  \max_{j\in\{1,\ldots,n\}}\big|G(\theta,z_{j}) - G(\vartheta,z_{j})\big|\Big\} = 0.
$$
\end{itemize}
\begin{remark}
\label{Bemerkung I}
Under (A 2), property (A 3') may be checked easily if condition (H) is satisfied. If $G$ has representation (PL), and if the involved linear mappings $\Lambda_{1},\ldots,\Lambda_{r}$ are $\MP^{Z}$-integrable, then (A 3') holds under (*) from Remark \ref{Remark zu PL}.
\end{remark}
\begin{lemma}
\label{ReduktionCountable}
Let (A 1) and (A 3') be satisfied, and let $\xi$ be some positive envelope of $\FFF^{\Theta}$ which is $\MP^{Z}$-integrable of order $p\in [1,\infty[$. Then with the at most countable subset $\overline{\Theta}\subseteq\Theta$ and the $(\MP^{Z})^{n}$-null sets $N_{n}$ from (A 3') the following statements hold.
\begin{itemize}
\item [1)] $\inf\limits_{\theta\in\Theta}\cR_{\rho_{p,a}}\big(F_{\theta}\big) = \inf\limits_{\theta\in\overline{\Theta}}\cR_{\rho_{p,a}}\big(F_{\theta}\big)$ for $a\in ]0,1]$.
\item [2)] For $n\in\NNN$, $\theta\in\Theta$ and $(z_{1},\ldots,z_{n})\in\RRR^{dn}\setminus N_{n}$
\begin{align*}
&
\inf_{\vartheta\in\overline{\Theta}}\big|\EEE[G(\vartheta,Z_{1})] - \EEE[G(\vartheta,Z_{1})]\big| = \inf_{\vartheta\in\overline{\Theta}}\max_{j=1,\ldots,n}\big|G(\vartheta,z_{j}) - G(\vartheta,z_{j})\big| = 0,\\
&
\inf_{\vartheta\in\overline{\Theta}}\big|\EEE[G_{p}(\vartheta,Z_{1})] - \EEE[G_{p}(\vartheta,Z_{1})]\big| = \inf_{\vartheta\in\overline{\Theta}}\max_{j=1,\ldots,n}\big|G_{p}(\vartheta,z_{j}) - G_{p}(\vartheta,z_{j})\big| = 0. 
\end{align*}
\item [3)] If $n\in\NNN$, and if $a\in ]0,1]$, then 
$
\inf\limits_{\theta\in\Theta}\cR_{\rho_{p,a}}\big(\hat{F}_{n,\theta}\big)  = \inf\limits_{\theta\in\overline{\Theta}}\cR_{\rho_{p,a}}\big(\hat{F}_{n,\theta}\big)~\MP-\mbox{a.s.}.
$
\end{itemize}
\end{lemma}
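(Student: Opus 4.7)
The plan is to use (A 3') to extract, for each $\theta\in\Theta$, an approximating sequence $\vartheta_k\in\overline{\Theta}$, and then use the $\MP^Z$-integrability of $\xi$ of order $p$ to upgrade the convergences supplied by (A 3') to convergence of the quantities defining $\cR_{\rho_{p,a}}(F_\theta)$ and $\cR_{\rho_{p,a}}(\hat F_{n,\theta})$. The reverse inequalities in parts 1) and 3) are trivial because $\overline{\Theta}\subseteq\Theta$, so I concentrate on the nontrivial direction.

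For part 1), I fix $\theta\in\Theta$ and apply (A 3') with $n=1$ and some $z_{1}\notin N_{1}$ to pick $\vartheta_k\in\overline{\Theta}$ with $\EEE[|G(\vartheta_k,Z_1)-G(\theta,Z_1)|]<1/k$. Extracting a $\MP$-a.s.\ convergent subsequence $G(\vartheta_{k_\ell},Z_1)\to G(\theta,Z_1)$, and using the pointwise domination $|G(\vartheta_k,Z_1)|\leq\xi(Z_1)\in L^p(\MP)$, dominated convergence yields $G(\vartheta_{k_\ell},Z_1)\to G(\theta,Z_1)$ in $L^p(\MP)$. The functional $\rho_{p,a}$ is $L^p$-continuous, since the expectation, the centering shift, the positive part and the $L^p$-norm are all $L^p$-continuous; hence $\cR_{\rho_{p,a}}(F_{\vartheta_{k_\ell}})\to\cR_{\rho_{p,a}}(F_\theta)$. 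The usual subsequence-of-subsequences argument extends the limit to the full sequence, giving $\inf_{\vartheta\in\overline{\Theta}}\cR_{\rho_{p,a}}(F_\vartheta)\leq\cR_{\rho_{p,a}}(F_\theta)$ for every $\theta\in\Theta$.

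For part 2), fixing $n$, $\theta$ and $(z_{1},\ldots,z_{n})\in\RRR^{dn}\setminus N_{n}$, I apply (A 3') directly to obtain $\vartheta_k\in\overline{\Theta}$ satisfying both $\EEE[|G(\vartheta_k,Z_1)-G(\theta,Z_1)|]<1/k$ and $\max_j|G(\vartheta_k,z_j)-G(\theta,z_j)|<1/k$. The two statements concerning $G$ are then immediate, using $|\EEE[G(\vartheta_k,Z_1)]-\EEE[G(\theta,Z_1)]|\leq\EEE[|G(\vartheta_k,Z_1)-G(\theta,Z_1)|]$ for the first. For the two statements concerning $G_p$, I would exploit the elementary inequality $|(s^+)^p-(t^+)^p|\leq p(|s|\vee|t|)^{p-1}|s-t|$ with $s=G(\theta,\cdot)-\EEE[G(\theta,Z_1)]$ and $t=G(\vartheta_k,\cdot)-\EEE[G(\vartheta_k,Z_1)]$, both bounded in modulus by $\xi+\|\xi\|_{\MP^Z,1}$. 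Combined with H\"older's inequality and the $\MP^Z$-integrability of $\xi$ of order $p$, this reduces the $G_p$-convergences to the $G$-convergences established just above.

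Part 3) follows from part 2) by observing that the right-hand side of \eqref{SAA upper semideviations} is a continuous function of $\bigl(G(\theta,Z_1(\omega)),\ldots,G(\theta,Z_n(\omega))\bigr)$ on $\RRR^n$. For every $\omega$ with $(Z_1(\omega),\ldots,Z_n(\omega))\notin N_n$ and every $\theta\in\Theta$, part 2) provides $\vartheta_k\in\overline{\Theta}$ with $\max_j|G(\vartheta_k,Z_j(\omega))-G(\theta,Z_j(\omega))|\to 0$, so $\cR_{\rho_{p,a}}(\hat F_{n,\vartheta_k})(\omega)\to\cR_{\rho_{p,a}}(\hat F_{n,\theta})(\omega)$ by continuity. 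This yields $\inf_{\vartheta\in\overline{\Theta}}\cR_{\rho_{p,a}}(\hat F_{n,\vartheta})(\omega)\leq\cR_{\rho_{p,a}}(\hat F_{n,\theta})(\omega)$ on the full-probability event $\{(Z_1,\ldots,Z_n)\notin N_n\}$, and taking the infimum over $\theta\in\Theta$ completes the argument. The main technical obstacle I anticipate is the transfer from $L^1$- (respectively pointwise) convergence of $G$ to the corresponding convergence of $G_p$ in part 2), which is precisely where the $L^p$-envelope hypothesis on $\xi$ is indispensable and where the Lipschitz bound on $t\mapsto(t^+)^p$ has to be handled carefully.
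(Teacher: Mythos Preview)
Your proposal is correct and follows essentially the same strategy as the paper's proof: extract an approximating sequence from (A~3'), upgrade the $L^1$-convergence it supplies using the $L^p$-envelope, and conclude via continuity of the relevant functionals. The paper phrases the upgrade slightly differently---it passes from $L^1$-convergence to convergence in probability, then applies the continuous mapping $t\mapsto\big((t-\EEE[G(\theta,Z_1)])^+\big)^p$, and finally invokes Vitali's theorem with the dominating function $(\xi+\EEE[\xi(Z_1)])^p$ to obtain $\EEE[G_p(\vartheta_k,Z_1)]\to\EEE[G_p(\theta,Z_1)]$; it then reads off part~1) from the representation~\eqref{optimization general III} rather than citing $L^p$-continuity of $\rho_{p,a}$ as you do.

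One small point in your part~2): the H\"older step you propose for the expectation of $G_p$ bounds $\EEE\big[(\xi+\|\xi\|_{\MP^Z,1})^{p-1}\,|G(\vartheta_k,Z_1)-G(\theta,Z_1)|\big]$ by $\|\xi+\|\xi\|_{\MP^Z,1}\|_{\MP^Z,p}^{\,p-1}\cdot\|G(\vartheta_k,Z_1)-G(\theta,Z_1)\|_{\MP^Z,p}$, so it needs $L^p$-convergence of $G(\vartheta_k,Z_1)-G(\theta,Z_1)$, not merely the $L^1$-convergence furnished by (A~3'). This is easily filled---either by the same subsequence-plus-dominated-convergence argument you already use in part~1), or by noting that $L^1$-convergence plus domination by $2\xi\in L^p$ already yields convergence in probability and then uniform integrability of the $p$-th powers, i.e.\ exactly the Vitali argument the paper uses. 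With that adjustment your outline is complete.
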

The proof is provided in Subsection \ref{Beweise upper semideviations}.
\medskip

Lemma \ref{Rueckfuehrung} suggests to apply the results from Section \ref{error risk neutral} simultaneously to the function classes $\FFF^{\Theta}$ and 
$\FFF^{\Theta,p}:=\{G_{p}(\theta,\cdot)\mid\theta\in\Theta\}$ ($p\in [1,\infty[$). However, we want to describe the involved terms $J(\FFF^{\Theta,p},C_{\FFF^{\Theta,p}},\delta)$ by means of the terms $J(\FFF^{\Theta},C_{\FFF^{\Theta}},\delta)$ associated with the genuine objective $G$.
This will be done in the following auxiliary result. 
\begin{lemma}
\label{auxiliary I}
Let (A 1) be fulfilled, and let $\xi$ be a positive envelope of $\FFF^{\Theta}$ which is $\MP^{Z}$-integrable of order $2 (p + 1)$ for some $p\in [1,\infty[$. Then $\xi_{p} := \big[\xi + \big(\EEE[\xi(Z_{1})]\vee 1\big)\big]^{p + 1 }$ is a square $\MP^{Z}$-integrable  positive envelope 
of $\FFF^{\Theta,p}$ satisfying
$$
J(\FFF^{\Theta,p},\xi_{p},\delta)\leq \sqrt{2}~2^{p+2}J(\FFF^{\Theta},\xi,\delta/2^{p+2}) + \sqrt{2}~\delta~ [\sqrt{\ln(2)} + 2\sqrt{\ln\big(2^{p+4}/\delta\big)}]
$$
for $~\delta\in ]0,1[$.
\end{lemma}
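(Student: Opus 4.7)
Set $c := \EEE[\xi(Z_{1})] \vee 1 \geq 1$, $c_{\theta} := \EEE[G(\theta, Z_{1})]$ and $H(\theta, z) := (G(\theta, z) - c_{\theta})^{+}$, so that $G_{p}(\theta, z) = H(\theta, z)^{p}$ and $0 \leq H(\theta, z) \leq \xi(z) + c$. Since $\xi + c \geq 1$, one has $H(\theta, \cdot)^{p} \leq (\xi + c)^{p} \leq (\xi + c)^{p+1} = \xi_{p}$, which gives the envelope claim, and square $\MP^{Z}$-integrability of $\xi_{p}$ follows immediately from $\xi_{p}^{2} = (\xi + c)^{2(p+1)}$ together with the assumption that $\xi$ is $\MP^{Z}$-integrable of order $2(p+1)$.

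The heart of the argument is a Lipschitz-type estimate that reduces $\FFF^{\Theta, p}$ to $\FFF^{\Theta}$ and to the constants $\{c_{\theta} : \theta \in \Theta\} \subseteq [-c, c]$. Combine $|u^{p} - v^{p}| \leq p (u \vee v)^{p-1} |u - v|$ for $u, v \geq 0$ and $p \geq 1$ (mean value theorem) with the $1$-Lipschitz property of $(\cdot)^{+}$ and the triangle inequality to obtain the pointwise bound
\[
|G_{p}(\theta_{1}, z) - G_{p}(\theta_{2}, z)| \leq p (\xi(z) + c)^{p-1}\bigl(|G(\theta_{1}, z) - G(\theta_{2}, z)| + |c_{\theta_{1}} - c_{\theta_{2}}|\bigr).
\]
Squaring, integrating against $\MQ \in \cM_{\textrm{\tiny fin}}$, using $(a+b)^{2} \leq 2(a^{2} + b^{2})$, and introducing the reweighted probability measure $\tilde{\MQ} := (\xi + c)^{2(p-1)} \MQ / K$ with $K := \int (\xi + c)^{2(p-1)}\,d\MQ$ (still in $\cM_{\textrm{\tiny fin}}$, since its support coincides with that of $\MQ$) transforms the $G$-part into $K\,\|G(\theta_{1}) - G(\theta_{2})\|_{\tilde{\MQ}, 2}^{2}$. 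Since $\xi + c \geq 1$ implies $(\xi + c)^{2(p-1)} \leq (\xi + c)^{2(p+1)}$ pointwise, $\sqrt{K} \leq \|\xi_{p}\|_{\MQ, 2}$, and since $\sqrt{2}\,p \leq 2^{p+1}$ for $p \geq 1$, I arrive at
\[
\|G_{p}(\theta_{1}, \cdot) - G_{p}(\theta_{2}, \cdot)\|_{\MQ, 2} \leq 2^{p+1}\,\|\xi_{p}\|_{\MQ, 2}\bigl(\|G(\theta_{1}, \cdot) - G(\theta_{2}, \cdot)\|_{\tilde{\MQ}, 2} + |c_{\theta_{1}} - c_{\theta_{2}}|\bigr).
\]

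Given this bound, covering $\FFF^{\Theta, p}$ at relative radius $\eta$ in $L^{2}(\MQ)$ is achieved by combining any $\eta/2^{p+2}$-cover of $\FFF^{\Theta}$ in $L^{2}(\tilde{\MQ})$ (cardinality $N_{1}$) with any $\eta/2^{p+2}$-cover of $\{c_{\theta}\} \subset [-c, c]$ (cardinality $N_{c}$ of order $c\,2^{p+2}/\eta$). Applying $\sqrt{\ln(2 N_{1} N_{c})} \leq \sqrt{\ln(2 N_{1})} + \sqrt{\ln(2 N_{c})}$ and integrating over $\eta \in (0, \delta)$ splits $J(\FFF^{\Theta, p}, \xi_{p}, \delta)$ into two pieces. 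After change of variables $\eta' = \eta/2^{p+2}$ and replacement of the supremum over $\MQ \in \cM_{\textrm{\tiny fin}}$ by the supremum over $\tilde{\MQ} \in \cM_{\textrm{\tiny fin}}$ inside the definition of $J(\FFF^{\Theta}, \xi, \cdot)$, the first piece is bounded by $\sqrt{2}\,2^{p+2}\,J(\FFF^{\Theta}, \xi, \delta/2^{p+2})$. The second piece is bounded via the elementary inequality \eqref{Integralabschaetzung}, which, when applied to $\int_{0}^{\delta}\sqrt{\ln(K/\eta)}\,d\eta$ with $K$ of order $2^{p+4}$, yields the contribution $\sqrt{2}\,\delta\bigl[\sqrt{\ln 2} + 2\sqrt{\ln(2^{p+4}/\delta)}\bigr]$. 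Summing the two pieces gives the stated inequality.

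The main technical obstacle is the passage from the absolute covering radius $\eta/2^{p+2}$ in $L^{2}(\tilde{\MQ})$ to the envelope-normalized radius $\varepsilon\,\|\xi\|_{\MQ', 2}$ that appears inside $J(\FFF^{\Theta}, \xi, \cdot)$: because $\tilde{\MQ}$ depends on $\MQ$ through the reweighting by $(\xi + c)^{2(p-1)}$, one must carefully justify that
$\sup_{\MQ \in \cM_{\textrm{\tiny fin}}} \sqrt{\ln(2 N(\eta/2^{p+2}, \FFF^{\Theta}, L^{2}(\tilde{\MQ})))}$ is controlled by $\sup_{\MQ' \in \cM_{\textrm{\tiny fin}}} \sqrt{\ln(2 N((\eta/2^{p+2})\,\|\xi\|_{\MQ', 2}, \FFF^{\Theta}, L^{2}(\MQ')))}$; it is exactly this normalization step that injects the extra factor $\sqrt{2}$ in front of both summands of the final bound.
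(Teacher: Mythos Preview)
Your overall strategy coincides with the paper's: a pointwise Lipschitz estimate on $|G_{p}(\theta_{1},\cdot)-G_{p}(\theta_{2},\cdot)|$ in terms of $|G(\theta_{1},\cdot)-G(\theta_{2},\cdot)|$ and $|c_{\theta_{1}}-c_{\theta_{2}}|$, followed by a transfer of covering numbers. The paper packages the transfer as a single appeal to Theorem~2.10.20 of van der Vaart--Wellner, whose proof is precisely the reweighting device $\tilde\MQ=(\xi+c)^{2(p-1)}\MQ/K$ you invoke by hand; your mean-value bound $|u^{p}-v^{p}|\le p(u\vee v)^{p-1}|u-v|$ is in fact a little cleaner than the paper's induction on $\lceil p\rceil$.

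There is, however, a genuine gap at exactly the point you flag as ``the main technical obstacle''. From $\sqrt{K}\le\|\xi_{p}\|_{\MQ,2}$ you arrive at a bound in which $\|G(\theta_{1},\cdot)-G(\theta_{2},\cdot)\|_{\tilde\MQ,2}$ appears \emph{unnormalized} by $\|\xi\|_{\tilde\MQ,2}$, so that $\FFF^{\Theta}$ must be covered at the \emph{absolute} radius $\eta/2^{p+2}$ in $L^{2}(\tilde\MQ)$. But $J(\FFF^{\Theta},\xi,\cdot)$ only controls covering at the \emph{relative} radius $\varepsilon\,\|\xi\|_{\MQ',2}$, and there is no reason for $\|\xi\|_{\tilde\MQ,2}\le 1$; an extra factor $\sqrt{2}$ cannot repair this, and the inequality you claim ``one must carefully justify'' is false in general (larger $\|\xi\|_{\MQ',2}$ means a larger radius on the right, hence a \emph{smaller} covering number---the wrong direction).

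The remedy is to normalize one step earlier. Since $\xi^{2}(\xi+c)^{2(p-1)}\le(\xi+c)^{2(p+1)}$ and $c^{2}(\xi+c)^{2(p-1)}\le(\xi+c)^{2(p+1)}$ pointwise, one has $\sqrt{K}\,\|\xi\|_{\tilde\MQ,2}\le\|\xi_{p}\|_{\MQ,2}$ and $\sqrt{K}\,c\le\|\xi_{p}\|_{\MQ,2}$, which yields directly
\[
\frac{\|G_{p}(\theta_{1},\cdot)-G_{p}(\theta_{2},\cdot)\|_{\MQ,2}}{\|\xi_{p}\|_{\MQ,2}}
\;\le\;2^{p+1}\Bigl(\frac{\|G(\theta_{1},\cdot)-G(\theta_{2},\cdot)\|_{\tilde\MQ,2}}{\|\xi\|_{\tilde\MQ,2}}+\frac{|c_{\theta_{1}}-c_{\theta_{2}}|}{c}\Bigr).
\]
Now the $\FFF^{\Theta}$-piece is covered at radius $(\eta/2^{p+2})\,\|\xi\|_{\tilde\MQ,2}$ in $L^{2}(\tilde\MQ)$, which is dominated by the supremum inside $J(\FFF^{\Theta},\xi,\cdot)$ simply because $\tilde\MQ\in\cM_{\textrm{\tiny fin}}$; the remainder of your argument then goes through. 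This normalization is exactly what is built into Theorem~2.10.20, which is why the paper can quote it as a black box.
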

The proof is delegated to Subsection \ref{Beweise upper semideviations}.
\medskip

Now, we are prepared to formulate and prove the main result on error estimates under upper semideviations. 
\begin{theorem}
\label{errorRatesUpperSemideviations}
Let (A 1), (A 2), (A 3') be fulfilled, where the Borel measurable mapping $\xi$ from (A 2) is integrable of order $2 (p + 1)$ for some $p\in [1,\infty[$. Setting $\xi_{p} := \linebreak\big[\xi + \big(\EEE[\xi(Z_{1})]\vee 1\big)\big]^{p + 1}$, and assuming $J(\FFF^{\Theta},\xi,1/4) < \infty$ the following statements are valid.
\begin{itemize}
\item [1)] For $\varepsilon, t > 0$, $n\in\NNN$ with $n\geq\max\big\{ \|\xi_{p}\|_{\MP^{Z},2}^{2}/2, [1 + 32 \sqrt{2} (t + 1) J(\FFF^{\Theta},\xi,1/4)]^{2}\big\}$, and $a\in ]0,1]$ 
the inequality 
\begin{align*}
& 
\MP\Big(\Big\{\big|\inf_{\theta\in\Theta}\cR_{\rho_{p,a}}\big(\hat{F}_{n,\theta}\big) - \inf_{\theta\in\Theta}\cR_{\rho_{p,a}}\big(F_{\theta}\big)\big|\geq\varepsilon\Big\}\Big)\\
&\leq 
\exp\left(\frac{-t~\sqrt{n}\varepsilon}{16 (a + 1) (t + 1) \|\xi\|_{\MP^{Z},2}}\cdot\frak{f}_{n}(t)\right)
+ 
\exp\left(\frac{-t~\sqrt{n}\varepsilon^{p}}{2^{p+3} a^{p} (t + 1) \|\xi_{p}\|_{\MP^{Z},2}}\cdot\frak{f}_{n}(t)\right)\\
&\quad
+ 
\MP\big(\Omega\setminus B_{n}^{\xi}\big) + \MP\big(\Omega\setminus B_{n}^{\xi_{p}}\big),
\end{align*}
holds if 
$$
\varepsilon > \frac{2 (1 + a) 32^{1/p} (t + 1)^{1/p} \|\xi_{p}\|_{\MP^{Z},2}^{1/p}}{n^{1/(2p)}}~\big[1 + \sqrt{p+6} + 2^{p+3} J(\FFF^{\Theta},\xi,1/2^{p+4})\big]^{1/p}.
$$ 
Here $B_{n}^{\xi}$ and $B^{\xi_{p}}_{n}$ are defined according to \eqref{Restmenge}.
\item [2)] The sequence 
$$
\Big(\sqrt{n}~\big[\inf_{\theta\in\Theta}\cR_{\rho_{p,a}}\big(\hat{F}_{n,\theta}\big) - \inf_{\theta\in\Theta}\cR_{\rho_{p,a}}\big(F_{\theta}\big)\big]\Big)_{n\in\NNN}
$$
is uniformly tight for $a\in ]0,1]$.
\end{itemize}
\end{theorem}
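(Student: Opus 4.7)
The idea is to reduce the risk-averse deviation event to the pair of risk-neutral deviation events furnished by Lemma \ref{Rueckfuehrung}, bound each via Theorem \ref{exponential bound}, and re-express the resulting entropy integral on $\FFF^{\Theta,p}$ in terms of one on $\FFF^{\Theta}$ using Lemma \ref{auxiliary I}.

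For part 1), I would first invoke Lemma \ref{ReduktionCountable} to replace, up to a $\MP$-null set, the infimum over $\Theta$ by the infimum over the countable set $\overline{\Theta}$ from (A 3') both in $\cR_{\rho_{p,a}}(F_\theta)$ and in $\cR_{\rho_{p,a}}(\hat F_{n,\theta})$; Lemma \ref{Rueckfuehrung} then yields
\begin{equation*}
\Big\{\big|\inf_{\theta\in\Theta}\cR_{\rho_{p,a}}(\hat F_{n,\theta}) - \inf_{\theta\in\Theta}\cR_{\rho_{p,a}}(F_\theta)\big|\geq\varepsilon\Big\}\subseteq D_{n,\varepsilon,a}^{\overline{\Theta}}\cup \overline{D}_{n,\varepsilon,p,a}^{\overline{\Theta}}
\end{equation*}
up to a $\MP$-null set. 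Now I would apply Theorem \ref{exponential bound} twice: once to $\FFF^{\Theta}$ with envelope $\xi$ at the threshold $\varepsilon/(2+2a)\geq \varepsilon/4$ (using $a\leq 1$), producing the first exponential term; once to $\FFF^{\Theta,p}$ with the square-integrable envelope $\xi_p$ (justified by Lemma \ref{auxiliary I}) at the threshold $(\varepsilon/[2a])^p$, producing the second exponential term after absorbing the factor $(2a)^p$ into the denominator. The two residuals $\MP(\Omega\setminus B_n^\xi)$ and $\MP(\Omega\setminus B_n^{\xi_p})$ appear directly.

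The remaining task is to verify the two threshold conditions $\varepsilon > \eta_{t,n}$ required by Theorem \ref{exponential bound}. For the second event, one needs $(\varepsilon/[2a])^p > \|\xi_p\|_{\MP^Z,2}/\sqrt n + 32\sqrt 2 (1+t)\|\xi_p\|_{\MP^Z,2}J(\FFF^{\Theta,p},\xi_p,1/4)/\sqrt n$. Here Lemma \ref{auxiliary I} with $\delta=1/4$ bounds $J(\FFF^{\Theta,p},\xi_p,1/4)$ by a multiple of $2^{p+3}J(\FFF^{\Theta},\xi,1/2^{p+4})$ plus a term of order $\sqrt{p+6}$, and after taking $p$-th roots one recovers exactly the lower bound on $\varepsilon$ stated in the theorem. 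For the first event the condition $\varepsilon/(2+2a) > \eta_{t,n}^{(\xi)}$ is weaker: the hypothesis $n\geq [1+32\sqrt 2 J(\FFF^\Theta,\xi,1/4)]^2$, together with $\xi\leq \xi_p$ (hence $\|\xi\|_{\MP^Z,2}\leq \|\xi_p\|_{\MP^Z,2}$), implies it under the stated $\varepsilon$-bound, whose scaling $n^{-1/(2p)}$ dominates $n^{-1/2}$. Summing the resulting bounds gives 1).

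Part 2) follows from 1) in the same way Theorem \ref{tightness} follows from Theorem \ref{exponential bound}: fix any $t>0$; once $\sqrt n \,\varepsilon$ exceeds the threshold of 1), the inequality applies, both exponential terms tend to $0$ as $\varepsilon\to\infty$, and the law of large numbers gives $\MP(\Omega\setminus B_n^\xi)+\MP(\Omega\setminus B_n^{\xi_p})\to 0$, yielding uniform tightness. The main obstacle is the bookkeeping of constants through the $p$-th root and through Lemma \ref{auxiliary I}; the specific form of $\xi_p$—with the additive term $\EEE[\xi(Z_1)]\vee 1$—is tailored so that both $\xi\leq \xi_p$ and $\|\xi_p\|_{\MP^Z,2}<\infty$ hold, which is precisely what makes the simultaneous application of Theorem \ref{exponential bound} to the two classes compatible with a single lower bound on $n$.
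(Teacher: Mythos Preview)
Your proposal is correct and follows essentially the same route as the paper: reduce to the countable set $\overline{\Theta}$ via Lemma \ref{ReduktionCountable}, split the deviation event via Lemma \ref{Rueckfuehrung} into $D_{n,\varepsilon,a}^{\overline{\Theta}}$ and $\overline{D}_{n,\varepsilon,p,a}^{\overline{\Theta}}$, apply Theorem \ref{exponential bound} separately to $\FFF^{\Theta}$ and $\FFF^{\Theta,p}$ (with the entropy integral for the latter controlled through Lemma \ref{auxiliary I}), and derive 2) from 1) exactly as Theorem \ref{tightness} was derived from Theorem \ref{exponential bound}. The paper records the same two comparison facts you use implicitly, namely $2^{p+2}J(\FFF^{\Theta},\xi,1/2^{p+4})\geq J(\FFF^{\Theta},\xi,1/4)$ and $\|\xi_{p}\|_{\MP^{Z},2}\geq\|\xi\|_{\MP^{Z},2}$, to collapse the two threshold conditions into the single stated lower bound on $\varepsilon$.
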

\begin{proof}
The mapping $\inf\limits_{\theta\in\Theta}\cR_{\rho_{p,a}}\big(\hat{F}_{n,\theta}\big) - \inf\limits_{\theta\in\Theta}\cR_{\rho_{p,a}}\big(F_{\theta}\big)$ is a well-defined random variable for $a\in ]0,1]$ due to Lemma \ref{Rueckfuehrung} along with Lemma \ref{ReduktionCountable} and completeness of $\OFP$. Statement 2) may be concluded from statement 1) in the same way as Theorem \ref{tightness} was derived from Theorem \ref{exponential bound}. Hence statement 1) is left to show.
\par 
Let $\overline{\Theta}\subseteq\Theta$ be from (A 3'). By Lemma \ref{ReduktionCountable} together with Lemma \ref{Rueckfuehrung} we have 
\begin{align}
\label{stern}
\MP\Big(\Big\{\big|\inf_{\theta\in\Theta}\cR_{\rho_{p,a}}\big(\hat{F}_{n,\theta}\big) - \inf_{\theta\in\Theta}\cR_{\rho_{p,a}}\big(F_{\theta}\big)\big|\geq\varepsilon\Big\}\Big)
\leq \MP\big(D_{n,\varepsilon,a}^{\overline{\Theta}}\big) + \MP\big(\overline{D}_{n,\varepsilon,p,a}^{\overline{\Theta}}\big)
\end{align}
for $n\in\NNN$, $\varepsilon > 0$, $a\in ]0,1]$,
where the sets $D_{n,\varepsilon,a}^{\overline{\Theta}}$ and $\overline{D}_{n,\varepsilon,p,a}^{\overline{\Theta}}$ are defined as in Lemma \ref{Rueckfuehrung}. 
The inequality $2^{p+2} J(\FFF^{\Theta},\xi,1/2^{p+4})\geq J(\FFF^{\Theta},\xi,1/4)$ holds (see \cite[Lemma 3.5.3]{GineNickl2016}). Moreover, $\|\xi_{p}\|_{\MP^{Z},2}\geq \|\xi_{p}\|_{\MP^{Z},2}^{1/p}\geq \|\xi\|_{\MP^{Z},2}$ due to Jensen's inequality. 
Then in view of Lemma \ref{auxiliary I} it is easy to check that the requirements of Theorem \ref{exponential bound} are met for both classes $\FFF^{\Theta}$ and $\FFF^{\Theta,p}$. Then statement 1) follows immediately from \eqref{stern} after application of Theorem \ref{exponential bound} separately to $\FFF^{\Theta}$ and $\FFF^{\Theta,p}$.
\end{proof}
\begin{remark}
\label{simplifications III}
Let us discuss upper estimations of the probabilities of the sets $\Omega\setminus B_{n}^{\xi}$ and $\Omega\setminus B_{n}^{\xi_{p}}$. 
\begin{itemize}
\item [1)] If the function $G$ is uniformly bounded by some positive constant $L$, then we may choose $\xi\equiv L$. Then $\Omega\setminus B_{n}^{\xi} = \Omega\setminus B_{n}^{\xi_{p}} = \emptyset$ for every $n\in\NNN$.
\item [2)] If $\xi$ is $\MP^{Z}$-integrable of order $4(p+1)$, then we have $$
\MP(\Omega\setminus B_{n}^{\xi}) \leq {\vari[\xi(Z_{1})^{2}]\over  n~\EEE[\xi(Z_{1})^{2}]^{2}}\quad\mbox{and}\quad 
\MP(\Omega\setminus B_{n}^{\xi_{p}}) \leq {\vari[\xi_{p}(Z_{1})^{2}]\over  n~\EEE[\xi_{p}(Z_{1})^{2}]^{2}}\quad\mbox{for}~n\in\NNN
$$ 
due to Chebychev's inequality. We may even obtain exponential bounds 
\begin{align*}
\MP(\Omega\setminus B_{n}^{\zeta}) \leq \exp\left(- n \EEE[\zeta(Z_{1})^{2}]^{2}/(8 \delta_{\zeta}^{2})\right)\vee \exp\left(- n~\EEE[\zeta(Z_{1})^{2}]/(4\delta_{\zeta})\right)
\end{align*}
for $\zeta\in\{\xi,\xi_{p}\}$ if $\EEE\big[\exp\big(\lambda \xi_{p}(Z_{1})^{2}\big)\big] < \infty$ for some $\lambda > 0$. Note that this property is satisfied iff $\EEE\big[\exp\big(\lambda \xi(Z_{1})^{2(p+1)}\big)\big] < \infty$ for some $\lambda > 0$. The constant $\delta_{\zeta}$ may be chosen arbitrarily by $\delta_{\zeta}\geq M(\zeta^{2})$, where 
$M(\zeta^{2})$ is defined in the same way as in Remark \ref{Simplifications}, 3).
\end{itemize}
\end{remark}
\begin{remark}
\label{Diskussion I}
Theorem \ref{errorRatesUpperSemideviations} may be simplified if the objective $G$ satisfies condition (H), or if it has representation (PL). This may be seen immediately in view of Proposition \ref{Hoelder-Bedingung auxiliary}, or Proposition \ref{startingpoint} along with Remark \ref{Bemerkung I}. In addition we may invoke more explicit upper estimates for the term $J(\FFF^{\Theta},\xi,1/4)$ provided by Proposition \ref{Hoelder-Bedingung auxiliary} and Proposition \ref{startingpoint}.
\par
According to Remark \ref{simplifications III} the error estimates in Theorem \ref{errorRatesUpperSemideviations} may be further improved if the mapping $G$ is bounded. In this situation a 
version has been shown in \cite{BartlTangpi2020} for bounded $G$ having the form $G(\theta,z) := W_{0}(z) + \langle\theta, \underline{W}\rangle$, where $W_{0}$ and $\underline{W}$ are fixed Borel measurable mappings, and $\langle\cdot,\cdot\rangle$ stands for the standard scalar product on $\RRR^{m}$. However, the bounds for deviation probabilities derived in \cite{BartlTangpi2020} are described in unknown universal constant. In contrast combining Theorem \ref{errorRatesUpperSemideviations} with Proposition \ref{Hoelder-Bedingung auxiliary} we may provide more explicit bounds.
\par
The statement on uniform tightness in Theorem \ref{errorRatesUpperSemideviations} has been already shown in \cite{DentchevaEtAl2017} for $p > 1$ under (H) with $\beta = 1$.
\end{remark}
\section{Error estimates under divergence risk measures}
\label{generalized divergence risk measures}
We want to study the risk averse stochastic program (\ref{optimization risk averse}), where  
we shall focus on $\rho$ being
a divergence measure. For introduction, let us consider a lower semicontinuous convex mapping $\Phi: [0,\infty[\rightarrow [0,\infty]$ satisfying 
$\Phi(0) < \infty$, $\Phi(x_{0}) < \infty$ for some $x_{0} > 1,$ $\inf_{x\geq 0}\Phi(x) = 0,$ and the growth condition
$\lim_{x\to\infty}\frac{\Phi(x)}{x} = \infty.$ Its Fenchel-Legendre transform
$$
\Phi^{*}:\R\rightarrow \R\cup\{\infty\},~y\mapsto\sup_{x\geq 0}~\big(xy - \Phi(x)\big)
$$
is a finite nondecreasing convex function whose restriction $\Phi^{*}\bigr |_{[0,\infty[}$ to 
$[0,\infty[$ is a finite Young function, i.e.  a continuous nondecreasing and unbounded real-valued mapping with $\Phi^{*}(0) = 0$ (cf. \cite[Lemma A.1]{BelomestnyKraetschmer2016}). Note also that the right-sided derivative $\Phi^{*'}$ of $\Phi^{*}$ is nonnegative and nondecreasing. We shall use  
$H^{\Phi^{*}}$ to denote the \textit{Orlicz heart} w.r.t. $\Phi^{*}\bigr |_{[0,\infty[}$ defined to mean the set of all random variables $X$ on $\OFP$ satisfying $\ex [\,\Phi^{*}(c|X|)\,]<\infty$ for all $c > 0$. As in the previous Section \ref{upper semideviations}  we identify random variables which differ on $\MP$-null sets only.
\par
The Orlicz heart is known to be a vector space enclosing all $\MP$-essentially bounded random variables. Moreover,  
by Jensen's inequality all members of $H^{\Phi^{*}}$ are $\MP$-integrable.
For more on Orlicz hearts w.r.t. to Young functions the reader may consult \cite{EdgarSucheston1992}.
\medskip

We can define the following mapping
$$
\rho^{\Phi}(X)=\sup_{\overline{\MP}\in\cP_{\Phi}}\left(\ex_{\overline{\MP}}\left[X\right] - 
\ex\left[\Phi\left(\frac{d\overline{\MP}}{d\pr}\right)\right]\right)
$$
for all \(X\in H^{\Phi^{*}},\)  where $\cP_{\Phi},$ denotes the set of all probability measures $\overline{\MP}$ which are absolutely continuous w.r.t. $\pr$  such that $\Phi\left(\frac{d\overline{\MP}}{d\pr}\right)$ is $\pr-$integrable. Note that 
$\frac{d\overline{\MP}}{d\pr}~ X$ is $\pr-$integrable for every $\overline{\MP}\in \cP_{\Phi}$ and any 
$X\in H^{\Phi^{*}}$ due to Young's inequality. We shall call $\rho^{\Phi}$ the \textit{divergence risk measure w.r.t. $\Phi$}. 
\medskip

Ben-Tal and Teboulle (\cite{Ben-TalTeboulle1987}, 
\cite{Ben-TalTeboulle2007}) discovered another more convenient representation. 
It reads as follows (see \cite{BelomestnyKraetschmer2016}).
\begin{theorem}
\label{optimized certainty equivalent}
The divergence risk measure $\rho^{\Phi}$ w.r.t. $\Phi$ satisfies the following representation
\begin{eqnarray*}
\rho^{\Phi}(X) 
= 
\inf_{x\in\R}\ex\left[\Phi^{*}(X + x) - x\right]
\quad\mbox{for all}~X\in H^{\Phi^{*}}.
\end{eqnarray*}
\end{theorem}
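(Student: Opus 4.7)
The plan is to prove the two inequalities separately using Fenchel--Young duality as the common engine, with the infimum over $x \in \RRR$ playing the role of a dual variable to the normalization $\EEE[d\overline{\MP}/d\MP] = 1$. Throughout, set $f(x) := \EEE[\Phi^{*}(X+x)] - x$. Since $X \in H^{\Phi^{*}}$ and $H^{\Phi^{*}}$ is a vector space, $\Phi^{*}(X+x) \in L^{1}(\MP)$ for every $x \in \RRR$, so $f$ is a real-valued convex function on $\RRR$.

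For the upper bound $\rho^{\Phi}(X) \le \inf_{x \in \RRR} f(x)$, fix any $\overline{\MP} \in \cP_{\Phi}$ with density $Z := d\overline{\MP}/d\MP$ and any $x \in \RRR$. The pointwise Fenchel--Young inequality $Z(X+x) \le \Phi(Z) + \Phi^{*}(X+x)$ together with $\EEE[Z] = 1$ yields
\begin{equation*}
\EEE_{\overline{\MP}}[X] - \EEE[\Phi(Z)] = \EEE[Z(X+x)] - x - \EEE[\Phi(Z)] \le f(x),
\end{equation*}
and taking the supremum over $\overline{\MP}$ followed by the infimum over $x$ closes this half.

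For the reverse inequality I would first show that $f$ attains its infimum at some $x^{*} \in \RRR$. Coercivity is where the structural hypotheses on $\Phi$ enter: as $x \to -\infty$, the bound $\Phi^{*} \ge -\Phi(0)$ forces $f(x) \ge -\Phi(0) - x \to \infty$; as $x \to +\infty$, superlinearity of $\Phi^{*}$ (inherited from $\Phi(x_{0}) < \infty$ for some $x_{0} > 1$, via the biconjugate) combined with monotonicity of $\Phi^{*}$ gives $f(x) \to \infty$. Convexity and coercivity then deliver a minimizer $x^{*}$. Next I would pick $Z^{*} := \Phi^{*\prime}(X + x^{*})$, the right derivative of $\Phi^{*}$ at $X + x^{*}$; it is nonnegative and Borel measurable, and it realizes equality in Fenchel--Young:
\begin{equation*}
\Phi(Z^{*}) + \Phi^{*}(X + x^{*}) = Z^{*}(X + x^{*})\quad \MP\text{-a.s.}
\end{equation*}
The first-order optimality $0 \in \partial f(x^{*})$, together with an interchange of derivative and expectation, should give $\EEE[Z^{*}] = 1$, so that $\overline{\MP}^{*} := Z^{*} \cdot \MP$ is a probability measure. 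Integrability of $Z^{*}(X + x^{*})$ follows by sandwiching it between $\Phi^{*}(X + x^{*}) - \Phi^{*}(0)$ and $\Phi^{*}(2(X + x^{*})) - \Phi^{*}(X + x^{*})$ via the subgradient inequality for $\Phi^{*}$ at $X + x^{*}$, both endpoints lying in $L^{1}(\MP)$ by the Orlicz heart property; consequently $\EEE[\Phi(Z^{*})] < \infty$ and $\overline{\MP}^{*} \in \cP_{\Phi}$. One then reads off
\begin{equation*}
\rho^{\Phi}(X) \ge \EEE[Z^{*} X] - \EEE[\Phi(Z^{*})] = \EEE[\Phi^{*}(X + x^{*})] - x^{*} = f(x^{*}),
\end{equation*}
which closes the gap.

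The main obstacle I expect is the interchange of derivative and expectation needed to conclude $\EEE[\Phi^{*\prime}(X + x^{*})] = 1$ from optimality of $x^{*}$, since $\Phi^{*\prime}$ is only the right derivative of a convex function and may be discontinuous on a negligible set. One has to dominate the one-sided difference quotients of $\Phi^{*}$ on a neighbourhood of $x^{*}$ by an integrable envelope; here both the monotonicity of $\Phi^{*\prime}$ and the defining property of $H^{\Phi^{*}}$, which provides integrability of $\Phi^{*}(c(X + x^{*}))$ for arbitrary $c > 0$, are essential. Once that is in hand, everything else reduces to routine manipulations with the convex conjugate.
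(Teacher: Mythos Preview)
The paper does not prove this theorem at all: it is stated with a citation to \cite{BelomestnyKraetschmer2016} (and attributed historically to Ben-Tal and Teboulle), so there is no in-paper argument to compare against. Your proposal follows the standard route for establishing the optimized-certainty-equivalent representation, and the overall architecture---Fenchel--Young for the easy inequality, coercivity to locate a minimizer $x^{*}$, then a dual feasible density built from a subgradient of $\Phi^{*}$---is correct.

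The one point that needs sharpening is exactly the one you flag. Taking $Z^{*}$ to be the \emph{right} derivative $\Phi^{*\prime}_{+}(X+x^{*})$ does not in general give $\EEE[Z^{*}]=1$: if $\Phi^{*}$ has a kink and $X+x^{*}$ charges it with positive probability, the optimality condition $0\in\partial f(x^{*})$ only yields
\[
\EEE\big[\Phi^{*\prime}_{-}(X+x^{*})\big]\;\le\;1\;\le\;\EEE\big[\Phi^{*\prime}_{+}(X+x^{*})\big],
\]
with possibly strict inequalities. The remedy is to take a measurable selection $Z^{*}(\omega)\in\partial\Phi^{*}\big(X(\omega)+x^{*}\big)$---for instance a convex combination of the two one-sided derivatives on the kink set---tuned so that $\EEE[Z^{*}]=1$. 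Such a $Z^{*}$ still satisfies the pointwise Fenchel--Young equality $\Phi(Z^{*})+\Phi^{*}(X+x^{*})=Z^{*}(X+x^{*})$, and your integrability sandwich and the Orlicz-heart hypothesis then go through unchanged. With this adjustment the argument is complete.
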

The representation in Theorem \ref{optimized certainty equivalent} is also known as 
the \textit{optimized certainty equivalent w.r.t. $\Phi^{*}$}. As optimized certainty equivalent the divergence measure $\rho^{\Phi}$ may be seen directly to be nondecreasing w.r.t. the increasing convex order. Theorem \ref{optimized certainty equivalent} also shows that $\rho^{\Phi}$ is distribution-invariant. In particular, we may define the functional $\cR_{\rho^{\Phi}}$ associated with $\rho^{\Phi}$ on the set of all distribution functions of the random variables from $H^{\Phi^{*}}$. 
Throughout this section we focus on the following specialization of optimization problem (\ref{optimization risk averse})
\begin{equation}
\label{optimization general II}
\inf_{\theta\in\Theta}\cR_{\rho^{\Phi}}\big(F_{\theta}\big),
\end{equation}
where $F_{\theta}$ stands for the distribution function of $G(\theta,Z)$ for 
$\theta\in\Theta$. 
\par
The SAA (\ref{SAA risk averse}) of (\ref{optimization general II}) reads as follows. 
\begin{equation}
\label{optimization approximativ}
\inf_{\theta\in\Theta}\cR_{\rho^{\Phi}}\big(\hat{F}_{n,\theta}\big) 
= \inf_{\theta\in\Theta}\inf_{x\in\RRR}\Big(\frac{1}{n}\sum_{i=1}^{n}\Phi^{*}\big( G(\theta, Z_{i}) +x\big) - x\Big)\quad(n\in\NNN).
\end{equation}
We shall strengthen condition (A 2) to the following property. 
\smallskip

\noindent
\begin{itemize}
\item [(A 2')] There exists some positive envelope $\xi$ of $\FFF^{\Theta}$ satisfying $\xi(Z_{1})\in\cH^{\Phi^{*}}$.
\end{itemize}
\smallskip

\noindent
Note that (A 2') together with (A 1) implies that $G(\theta,Z_{1})$ belongs to $\cH^{\Phi^{*}}$ for every $\theta\in\Theta$ so that the genuine optimization problem \eqref{optimization general II} is well-defined.
\par
We are mainly interested in deviation probabilities \eqref{Kreuz} w.r.t. $\cR_{\rho^{\Phi}}$. Representation \eqref{optimization approximativ} along with Theorem \ref{optimized certainty equivalent} suggests to apply Theorem \ref{exponential bound} to the SAA of 
$$
\inf_{(\theta,x)\in\Theta\times\RRR}\EEE\big[G_{\Phi}\big((\theta,x),Z_{1}\big)\big],
$$
where
\begin{equation}
\label{new objective}
G_{\Phi}: (\Theta\times\RRR)\times\RRR^{d}\rightarrow\RRR, \big((\theta,x),z\big)\mapsto \Phi^{*}\big(G(\theta,z) + x\big) - x.
\end{equation}
Unfortunately, the application is not immediate because the parameter space is not totally bounded w.r.t. the Euclidean metric on $\RRR^{d}$. So a kind of compactification is needed, provided by the following result.
For preparation let us consider any mapping $\xi$ as in (A 2') and let $x_{0} >1$ be from the effective domain of $\Phi$. Then we introduce for $\delta > 0$ the following real numbers
\begin{eqnarray}
&&\label{Eingrenzung1}
x_{l}(x_{0},\xi,\delta) := - \Phi(0) - \delta - \EEE\big[\Phi^{*}\big(\xi(Z_{1})\big)\big]\\
&&\label{Eingrenzung2} 
x_{u}(x_{0},\xi,\delta) := \frac{\Phi(x_{0}) + (1 + x_{0})\delta + \EEE\big[\Phi^{*}\big(\xi(Z_{1})\big)\big] + x_{0} \EEE[\xi(Z_{1})]}{x_{0} - 1} + \Phi(0).
\end{eqnarray}
Note that by (A 2') along with Jensen's inequality the mapping $\xi$ is $\MP^{Z}$-integrable. For abbreviation we set, using notations \eqref{Eingrenzung1} as well as \eqref{Eingrenzung2}
\begin{equation}
\label{Eingrenzung3}
I_{x_{0},\xi,\delta} :=  [x_{l}(x_{0},\xi,\delta),x_{u}(x_{0},\xi,\delta)].
\end{equation}
\begin{proposition}
\label{compactification}
Let (A 1), (A 2') be fulfilled. Furthermore, for $\delta > 0$ and $n\in\NNN$ the set 
$A_{n,\delta}^{\xi}\in\cF$ is defined to consist of all $\omega\in\Omega$ satisfying
$$
\frac{1}{n}\sum_{j=1}^{n}\xi\big(Z_{j}(\omega)\big)\leq \EEE\big[\xi(Z_{1})\big] + \delta,~\frac{1}{n}\sum_{j=1}^{n}\Phi^{*}\Big(\xi\big(Z_{j}(\omega)\big)\Big)\leq  \EEE\big[\Phi^{*}\big(\xi(Z_{1})\big)\big] + \delta.
$$
If $G(\cdot,z)$ is lower semicontinuous for $z\in\RRR^{d}$, then optimal values of \eqref{optimization approximativ} and \eqref{optimization general II} are always finite, and, using notations \eqref{new objective}, \eqref{Eingrenzung3}, if $\omega\in A^{\xi}_{n,\delta}$, then
\begin{align*}
&
\inf_{\theta\in\Theta}\cR_{\rho^{\Phi}}\big(\hat{F}_{n,\theta}\big) - \inf_{\theta\in\Theta}\cR_{\rho^{\Phi}}(F_{\theta})\\
&
= 
\inf_{(\theta,x)\in\Theta\times I_{x_{0},\xi,\delta}}\frac{1}{n}\sum_{j=1}^{n}G_{\Phi}\big((\theta,x),Z_{j}\big) - \inf_{(\theta,x)\in\Theta\times I_{x_{0},\xi,\delta}}\EEE\big[G_{\Phi}\big((\theta,x),Z_{1}\big)\big].
\end{align*}
\end{proposition}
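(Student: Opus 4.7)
The plan is to invoke the optimized certainty equivalent representation of Theorem \ref{optimized certainty equivalent} to rewrite both the true program and its SAA as infima jointly over $\theta\in\Theta$ and an auxiliary scalar $x\in\RRR$, and then to show that on $A_{n,\delta}^{\xi}$ (and unconditionally for the population problem) this $x$-infimum is unchanged when $x$ is restricted to $I_{x_{0},\xi,\delta}$. Once this localization is established, a Fubini-type swap of infima $\inf_{\theta}\inf_{x\in I_{x_{0},\xi,\delta}}=\inf_{(\theta,x)\in\Theta\times I_{x_{0},\xi,\delta}}$ will deliver the claimed identity. I take $x=0$ as reference point since $x_{l}\leq 0\leq x_{u}$ is clear from \eqref{Eingrenzung1}--\eqref{Eingrenzung2}.

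For the lower cutoff I exploit the universal inequality $\Phi^{*}\geq -\Phi(0)$, which forces $G_{\Phi}((\theta,x),z)\geq -\Phi(0)-x$ and hence, for $x<x_{l}(x_{0},\xi,\delta)$, drives both the expectation and the empirical mean strictly above $\delta+\EEE[\Phi^{*}(\xi(Z_{1}))]$. At $x=0$, by contrast, monotonicity of $\Phi^{*}$ together with $|G(\theta,\cdot)|\leq\xi$ will give upper bounds $\EEE[\Phi^{*}(\xi(Z_{1}))]$ (population) and $\EEE[\Phi^{*}(\xi(Z_{1}))]+\delta$ (SAA on $A_{n,\delta}^{\xi}$), so $x<x_{l}$ is strictly worse than $x=0$ in both cases and may be discarded.

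For the upper cutoff I invoke the supporting-hyperplane inequality $\Phi^{*}(y)\geq x_{0}y-\Phi(x_{0})$, valid because $x_{0}$ lies in the effective domain of $\Phi$, to obtain
\[
G_{\Phi}((\theta,x),z)\geq -x_{0}\xi(z)+(x_{0}-1)x-\Phi(x_{0}).
\]
Averaging over $Z_{1},\ldots,Z_{n}$ on $A_{n,\delta}^{\xi}$ (respectively taking expectations) and using $x_{0}-1>0$, I solve for the $x$-threshold beyond which the right-hand side exceeds the reference value at $x=0$; a direct computation will show that this threshold is exactly $x_{u}(x_{0},\xi,\delta)-\Phi(0)$ in the SAA case and strictly smaller in the population case, so the nonnegative safety term $\Phi(0)$ absorbed into $x_{u}$ ensures strict suboptimality for $x>x_{u}$ in both settings.

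With the $x$-infima restricted to the compact interval $I_{x_{0},\xi,\delta}$, interchanging the order of infima yields the identity, and finiteness of the optimal values follows because the linear lower bound from the upper-cutoff step and the trivial upper bound obtained by evaluating at any fixed $(\theta_{0},0)\in\Theta\times I_{x_{0},\xi,\delta}$ are both finite. The hypothesized lower semicontinuity of $G(\cdot,z)$ together with continuity of $\Phi^{*}$ will ensure joint lower semicontinuity of $(\theta,x)\mapsto G_{\Phi}((\theta,x),z)$, which is what is needed to guarantee that $\inf_{(\theta,x)\in\Theta\times I_{x_{0},\xi,\delta}}\frac{1}{n}\sum_{j}G_{\Phi}((\theta,x),Z_{j})$ is a genuine random variable. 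The main obstacle will be the upper cutoff: it crucially relies on $x_{0}>1$ lying in the effective domain of $\Phi$---this is precisely where the superlinear growth $\Phi(x)/x\to\infty$ actually enters the argument---and the bookkeeping that pins the derived threshold to the explicit formula \eqref{Eingrenzung2}, with its $\delta$-contributions coming from the event $A_{n,\delta}^{\xi}$ and the $\Phi(0)$ margin, has to be carried out carefully to obtain a single $x_{u}$ working in both the exact and the empirical problem simultaneously.
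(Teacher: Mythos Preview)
Your approach is essentially the same as the paper's. The paper packages the argument into three auxiliary results: a lemma recording the two pointwise lower bounds $\Phi^{*}\geq -\Phi(0)$ and $\Phi^{*}(y)\geq x_{0}y-\Phi(x_{0})$ (exactly your lower- and upper-cutoff inequalities), a theorem showing that the empirical minimizer set $S_{n}(\omega)$ is nonvoid and contained in $\Theta\times I_{x_{0},\xi,\delta}$ for $\omega\in A_{n,\delta}^{\xi}$, and a lemma doing the same for the population problem. Your direct comparison with the reference point $x=0$ is precisely how the paper bounds $\sup_{\theta}\inf_{x}$ in those auxiliary results, and your threshold computation for the upper cutoff matches the paper's bookkeeping.

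One small mismatch: your stated role for lower semicontinuity (measurability of the restricted infimum) is not how the paper uses it. The paper invokes lower semicontinuity of $G(\cdot,z)$ to ensure that the empirical and population minimizer sets are \emph{nonempty} on the compact box $\Theta\times[a_{n}(\omega),b_{n}(\omega)]$, and then deduces the infimum identity from containment of minimizers. Your comparison-with-$x=0$ argument actually bypasses this and yields the infimum identity without needing attainment, which is a slight streamlining. Measurability of the SAA infimum is handled later in the paper via the countable-reduction assumption (A~3'') and a separate lemma, not via lower semicontinuity in this proposition; so your remark about measurability, while not wrong in spirit, points to machinery that belongs elsewhere in the paper.
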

The proof of Proposition \ref{compactification} may be found in Subsection \ref{Beweis compactification}.
\medskip

Now in view of Proposition \ref{compactification}, we may derive the desired deviation probabilities by applying Theorem \ref{exponential bound} to the function classes of the following type
\begin{equation}
\label{neue Funktionsklassen}
\FFF^{\Theta}_{\Phi,I} :=\big\{G_{\Phi}\big((\theta,x),\cdot\big)\mid (\theta,x)\in\Theta\times I\big\}\quad(I\subseteq\RRR~\mbox{compact interval}).
\end{equation}
However, we want to formulate the requirement by means of the terms $J(\FFF^{\Theta},C_{\FFF^{\Theta}},\delta)$ associated with the genuine objective $G$ instead of the terms $J(\FFF^{\Theta}_{\Phi,I},C_{\FFF^{\Theta}_{\Phi,I}},\delta)$. The relationship between these terms is the subject of the following auxiliary result.
\begin{lemma}
\label{relationship}
Let $I\subseteq\RRR$ be a nondegenerated compact interval fulfilling the property $\sup I = |\inf I|\vee |\sup I| > 0$, and let $\Phi^{*'}_{+}$ denote the right-sided derivative of $\Phi^{*}$. If $\xi$ is a square $\MP^{Z}$-integrable positive envelope of $\FFF^{\Theta}$, then 
$$
C_{\FFF^{\Theta}_{\Phi,I}} := 2 \big[\Phi^{*'}_{+}\big(\xi + \sup I\big) + 1] \sqrt{\xi^{2} + (\sup I)^{2}}
$$
is a positive envelope of $\FFF^{\Theta}_{\Phi,I}$ satisfying 
\begin{align*}
J(\FFF^{\Theta}_{\Phi,I},C_{\FFF^{\Theta}_{\Phi,I}},\delta)\leq \sqrt{2}~ J(\FFF^{\Theta},\xi,\delta) + 4\delta\sqrt{\ln(1/\delta)} + \sqrt{2\ln(2)}~\delta\quad\mbox{for}~\delta \in ]0,\exp(-1)].
\end{align*}
\end{lemma}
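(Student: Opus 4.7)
The argument would be organized in three stages: verifying the envelope claim, establishing a pointwise Lipschitz-type control, and converting this control into a cover of $\FFF^{\Theta}_{\Phi,I}$ that can be counted through covers of $\FFF^{\Theta}$. For the envelope, since $\Phi^{*}$ is convex and nondecreasing with $\Phi^{*}(0)=0$, one obtains $|\Phi^{*}(u)|\leq\Phi^{*'}_{+}(|u|)\,|u|$ for every $u\in\RRR$. Applied to $u=G(\theta,z)+x$ with $|G(\theta,z)|\leq\xi(z)$ and $|x|\leq\sup I$, and combined with $\xi(z)+\sup I\leq\sqrt{2}\,\sqrt{\xi(z)^{2}+(\sup I)^{2}}$, this gives $|G_{\Phi}((\theta,x),z)|\leq\Phi^{*'}_{+}(\xi(z)+\sup I)(\xi(z)+\sup I)+\sup I\leq C_{\FFF^{\Theta}_{\Phi,I}}(z)$. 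The same convexity yields the sharper $|\Phi^{*}(a)-\Phi^{*}(b)|\leq\Phi^{*'}_{+}(|a|\vee|b|)\,|a-b|$, which then produces, for all $(\theta,x),(\vartheta,y)\in\Theta\times I$,
$$|G_{\Phi}((\theta,x),z)-G_{\Phi}((\vartheta,y),z)|\leq\Phi^{*'}_{+}(\xi(z)+\sup I)\,|G(\theta,z)-G(\vartheta,z)|+[\Phi^{*'}_{+}(\xi(z)+\sup I)+1]\,|x-y|.$$

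To build the cover, I would fix $\MQ\in\cM_{\textrm{\tiny fin}}$ with $A:=\|\Phi^{*'}_{+}(\xi+\sup I)\|_{\MQ,2}>0$ (the degenerate case is trivial) and introduce the tilted probability $\tilde{\MQ}\in\cM_{\textrm{\tiny fin}}$ with $d\tilde{\MQ}/d\MQ=\Phi^{*'}_{+}(\xi+\sup I)^{2}/A^{2}$, so that $\|\Phi^{*'}_{+}(\xi+\sup I)\,h\|_{\MQ,2}=A\,\|h\|_{\tilde{\MQ},2}$ for every Borel $h$. The two inequalities $\|C_{\FFF^{\Theta}_{\Phi,I}}\|_{\MQ,2}\geq 2\|\Phi^{*'}_{+}(\xi+\sup I)\,\xi\|_{\MQ,2}$ and $\|C_{\FFF^{\Theta}_{\Phi,I}}\|_{\MQ,2}\geq 2\sup I\,\|\Phi^{*'}_{+}(\xi+\sup I)+1\|_{\MQ,2}$ follow from $C_{\FFF^{\Theta}_{\Phi,I}}\geq 2\Phi^{*'}_{+}(\xi+\sup I)\,\xi$ and $C_{\FFF^{\Theta}_{\Phi,I}}\geq 2\sup I\,[\Phi^{*'}_{+}(\xi+\sup I)+1]$ respectively. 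Coupling a $\varepsilon\,\|\xi\|_{\tilde{\MQ},2}$-cover of $\FFF^{\Theta}$ in $L^{2}(\tilde{\MQ})$ with a uniform $\varepsilon\sup I$-net of the interval $I\subseteq[-\sup I,\sup I]$ (of cardinality at most $1+2/\varepsilon$) then gives, via the above Lipschitz estimate,
$$N\bigl(\varepsilon\,\|C_{\FFF^{\Theta}_{\Phi,I}}\|_{\MQ,2},\FFF^{\Theta}_{\Phi,I},L^{2}(\MQ)\bigr)\leq N\bigl(\varepsilon\,\|\xi\|_{\tilde{\MQ},2},\FFF^{\Theta},L^{2}(\tilde{\MQ})\bigr)\cdot\bigl(1+2/\varepsilon\bigr).$$

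Finally I would apply $\sqrt{\ln(ab)}\leq\sqrt{\ln a}+\sqrt{\ln b}$, take the supremum over $\MQ\in\cM_{\textrm{\tiny fin}}$ (noting that $\tilde{\MQ}$ also belongs to $\cM_{\textrm{\tiny fin}}$), and integrate over $(0,\delta)$. The first summand produces the term proportional to $J(\FFF^{\Theta},\xi,\delta)$, with the prefactor $\sqrt{2}$ arising from the quadratic splitting $(a+b)^{2}\leq 2(a^{2}+b^{2})$ used when combining the two Lipschitz contributions into a single $L^{2}(\MQ)$-error; the second summand reduces to an elementary integral of the form $\int_{0}^{\delta}\sqrt{\ln(3/\varepsilon)}\,d\varepsilon$, which a standard change of variable $\varepsilon=e^{-u^{2}}$ (or integration by parts) bounds by $4\delta\sqrt{\ln(1/\delta)}+\sqrt{2\ln(2)}\,\delta$ on $(0,e^{-1}]$. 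The main technical obstacle will be the bookkeeping in the tilted-measure step, where the radii in both factor covers have to be tuned so as to match the $\varepsilon\|\xi\|_{\tilde{\MQ},2}$-scaling intrinsic to $J(\FFF^{\Theta},\xi,\cdot)$ while simultaneously fitting the $\|C_{\FFF^{\Theta}_{\Phi,I}}\|_{\MQ,2}$-scaling appearing in the definition of $J(\FFF^{\Theta}_{\Phi,I},C_{\FFF^{\Theta}_{\Phi,I}},\cdot)$; the two envelope lower bounds recorded above are precisely what makes this matching possible.
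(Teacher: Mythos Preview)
Your approach is correct and in fact slightly more hands-on than the paper's. The envelope and pointwise Lipschitz estimates are essentially identical to the paper's. The divergence is in the next step: the paper packages the squared Lipschitz bound
\[
|G_{\Phi}((\theta,x),z)-G_{\Phi}((\vartheta,y),z)|^{2}\leq 4[\Phi^{*'}_{+}(\xi+\sup I)+1]^{2}\big(|G(\theta,z)-G(\vartheta,z)|^{2}+|x-y|^{2}\big)
\]
and invokes Theorem~2.10.20 of van der Vaart--Wellner as a black box to obtain the product covering estimate, whereas you reproduce the mechanism of that theorem explicitly via the tilted measure $\tilde{\MQ}$. Your explicit route has the advantage of being self-contained and of actually delivering the coefficient $1$ (not $\sqrt{2}$) in front of $J(\FFF^{\Theta},\xi,\delta)$: with your linear Lipschitz bound and the two pointwise lower bounds on $C_{\FFF^{\Theta}_{\Phi,I}}$, the product cover at radii $\varepsilon\|\xi\|_{\tilde{\MQ},2}$ and $\varepsilon\sup I$ already lands at radius $\varepsilon\|C_{\FFF^{\Theta}_{\Phi,I}}\|_{\MQ,2}$, and splitting $\sqrt{\ln(2N_{1}N_{2})}\leq\sqrt{\ln(2N_{1})}+\sqrt{\ln N_{2}}$ gives $J(\FFF^{\Theta},\xi,\delta)$ directly. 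Your remark that the prefactor $\sqrt{2}$ ``arises from the quadratic splitting $(a+b)^{2}\leq 2(a^{2}+b^{2})$'' is therefore misplaced in your own argument---that factor is an artifact of the paper's squared formulation required by the van der Vaart--Wellner theorem, not of your linear one. The elementary integral bound at the end is handled the same way in both proofs (change of variable plus inequality~\eqref{Integralabschaetzung}), just with slightly different bookkeeping of constants.
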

The proof may be found in Subsection \ref{Beweis compactification}.
\medskip

Next, we want to find an analogue of (A 3) for the auxiliary goal $G_{\Phi}$ but in terms of the genuine one $G$. It is the following one.
\begin{itemize}
\item [(A 3'')]There exist some at most countable subset $\overline{\Theta}\subseteq\Theta$ and $(\MP^{Z})^{n}$-null sets $N_{n}$ $(n\in\NNN)$ such that 
$$
\inf_{\vartheta\in\overline{\Theta}}\EEE[|G(\vartheta,Z_{1})- G(\theta,Z_{1})|]= \inf_{\vartheta\in\overline{\Theta}}\max_{j\in\{1,\ldots,n\}}\big|G(\theta,z_{j}) - G(\vartheta,z_{j})\big| = 0
$$
for $n\in\NNN,\theta\in\Theta$ and $(z_{1},\ldots,z_{n})\in\RRR^{d n}\setminus N_{n}$.
\end{itemize}
\begin{remark}
\label{Bemerkung II}
Criteria for (A 3'') in the cases that $G$ satisfies (H) or has representation (PL) carry over directly from Remark \ref{Bemerkung I}. This is because (A 3'') is implied by (A 3').
\end{remark}
\begin{lemma}
\label{countable parameter subsets}
Let (A 1), (A 2') and (A 3'') be fulfilled, and let $I\subseteq\RRR$ denote a nondegenerated interval. Then with the at most countable subset $\overline{\Theta}\subseteq\Theta$ and the $(\MP^{Z})^{n}$-null sets $N_{n}$ $(n\in\NNN)$ from (A 3'') it holds 
\begin{align*}
&
\inf_{(\vartheta,y)\in\overline{\Theta}\times I\cap\mathbb{Q}}\big|\EEE\big[G_{\Phi}\big((\vartheta,y),Z_{1}\big)\big] - \EEE\big[G_{\Phi}\big((\theta,x),Z_{1}\big)]\big|\\
&= \inf_{(\vartheta,y)\in\overline{\Theta}\times I\cap\mathbb{Q}}\max_{j\in\{1,\ldots,n\}}\big|G_{\Phi}\big((\theta,y),z_{j}\big) - G_{\Phi}\big((\vartheta,x),z_{j}\big)\big| = 0
\end{align*}
for $n\in\NNN$, $\theta\in\Theta$, $x\in I$ and $(z_{1},\ldots,z_{n})\in\RRR^{d n}\setminus N_{n}$.
\end{lemma}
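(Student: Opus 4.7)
The plan is to transfer the approximations supplied by (A 3'') from $G$ to $G_{\Phi}((\cdot,\cdot),\cdot) = \Phi^{*}(G(\cdot,\cdot)+\cdot) - \cdot$ by exploiting that $\Phi^{*}$ is a finite convex function, hence continuous and Lipschitz on every bounded interval. Since the two equalities in the conclusion are separate infima, I would treat them independently. Throughout, the envelope assumption $\xi(Z_{1})\in H^{\Phi^{*}}$ from (A 2') is what furnishes the integrable domination required by the expectation step.

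For the pathwise identity, fix $n$, $\theta$, $x$ and $(z_{1},\ldots,z_{n})\in\RRR^{dn}\setminus N_{n}$. By (A 3'') one picks $\vartheta_{k}\in\overline{\Theta}$ with $\max_{j\leq n}|G(\theta,z_{j})-G(\vartheta_{k},z_{j})|\to 0$, together with $y_{k}\in I\cap\mathbb{Q}$, $y_{k}\to x$. For $k$ large, all arguments $G(\theta,z_{j})+y_{k}$ and $G(\vartheta_{k},z_{j})+x$ lie in a common compact interval $[-M_{n},M_{n}]$ with $M_{n}:=\max_{j\leq n}\xi(z_{j})+|x|+1$, on which $\Phi^{*}$ is Lipschitz with constant $L_{n}:=\Phi^{*'}(M_{n})$ (since $\Phi^{*'}$ is nondecreasing and nonnegative). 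The elementary bound
$$
|G_{\Phi}((\theta,y_{k}),z_{j})-G_{\Phi}((\vartheta_{k},x),z_{j})|\leq L_{n}\big(|G(\theta,z_{j})-G(\vartheta_{k},z_{j})|+|y_{k}-x|\big)+|y_{k}-x|
$$
then forces the pathwise infimum to zero.

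For the expectation identity, choose via (A 3'') a sequence $\vartheta_{k}\in\overline{\Theta}$ with $\EEE[|G(\vartheta_{k},Z_{1})-G(\theta,Z_{1})|]\to 0$ along with $y_{k}\in I\cap\mathbb{Q}$, $y_{k}\to x$. Passing to a subsequence, $G(\vartheta_{k},Z_{1})\to G(\theta,Z_{1})$ $\MP$-a.s., and continuity of $\Phi^{*}$ yields pointwise convergence of the integrands. For the dominating function, monotonicity of $\Phi^{*}$ together with $|y_{k}|\leq|x|+1$ (eventually) gives $\Phi^{*}(G(\vartheta_{k},Z_{1})+y_{k})\leq\Phi^{*}(\xi(Z_{1})+|x|+1)$, while the Fenchel--Legendre definition supplies the lower bound $\Phi^{*}\geq -\Phi(0)$. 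Convexity yields $\Phi^{*}(\xi(Z_{1})+|x|+1)\leq\tfrac{1}{2}\Phi^{*}(2\xi(Z_{1}))+\tfrac{1}{2}\Phi^{*}(2(|x|+1))$, which is $\MP$-integrable by the Orlicz heart property $\xi(Z_{1})\in H^{\Phi^{*}}$. Dominated convergence then delivers $\EEE[G_{\Phi}((\vartheta_{k},y_{k}),Z_{1})]\to \EEE[G_{\Phi}((\theta,x),Z_{1})]$, closing the first infimum.

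The main obstacle I anticipate lies in the expectation step: the shifts by $y_{k}$ and $|x|+1$ obstruct a direct appeal to $\xi(Z_{1})\in H^{\Phi^{*}}$, which only controls $\EEE[\Phi^{*}(c\,\xi(Z_{1}))]$ for positive multiples. The convex splitting of $\Phi^{*}$, combined with the Orlicz heart condition and the elementary lower bound $\Phi^{*}\geq -\Phi(0)$, is the essential device to produce an integrable majorant and resolve it.
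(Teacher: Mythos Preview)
Your proof is correct and follows essentially the same route as the paper: continuity of $\Phi^{*}$ handles the pathwise infimum, and an integrable majorant built from the Orlicz-heart assumption (A 2') handles the expectation infimum. The only cosmetic differences are that the paper argues convergence in probability and invokes Vitali's theorem rather than passing to an a.s.\ subsequence and using dominated convergence, and that the paper asserts integrability of $\Phi^{*}(\xi(Z_{1})+c)$ directly from (A 2') whereas you make the convex splitting $\Phi^{*}(\xi(Z_{1})+c)\leq\tfrac{1}{2}\Phi^{*}(2\xi(Z_{1}))+\tfrac{1}{2}\Phi^{*}(2c)$ explicit---which is in fact the justification the paper is tacitly relying on.
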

The proof is postponed to Subsection \ref{Beweis compactification}.
\par
Putting together Proposition \ref{compactification} and Lemmata \ref{relationship}, \ref{countable parameter subsets}, we end up with the following result on the deviation probabilities. Recall notations \eqref{Eingrenzung2}, and $\Phi^{*'}_{+}$ for the right-sided derivative of $\Phi^{*}$.
\begin{theorem}
\label{errors divergence risk measures}
Let (A 1), (A 2'), (A 3'') be fulfilled. Using notation \eqref{Eingrenzung2} the Borel measurable mapping $\xi$ from (A 2') is assumed to satisfy the property that the mapping $\xi_{x_{0},\xi,\delta} := [\Phi^{*'}_{+}\big(\xi + x_{u}(x_{0},\xi,\delta)\big) + 1]\sqrt{\xi^{2} + x_{u}(x_{0},\xi,\delta)^{2}}$ is square $\MP^{Z}$-integrable for some $x_{0} \in ]1,2[$ from the effective domain of $\Phi$ and $\delta > 0$. 
If $G(\cdot,z)$ is lower semicontinuous for $z\in\RRR^{d}$, and if $J(\FFF^{\Theta},\xi,1/2)$ is finite, then the following statements are true.
\begin{itemize}
\item [1)] For $\varepsilon, t > 0$ and $n\in\NNN$ with $n\geq 2 \|\xi_{x_{0},\xi,\delta}\|_{\MP^{Z},2}^{2}$ the inequality 
\begin{align*}
&
\MP\left(\left\{\Big|\inf\limits_{\theta\in\Theta}~\cR_{\rho^{\Phi}}\big(\hat{F}_{n,\theta}\big) - \inf\limits_{\theta\in\Theta}\cR_{\rho^{\Phi}}\big(F_{\theta}\big)\Big|\geq\varepsilon\right\}\right)\\
\\
&\leq 
\exp\left(\frac{-t~\sqrt{n}\varepsilon}{16 (t + 1) \|\xi_{x_{0},\xi,\delta}\|_{\MP^{Z},2}}\cdot\frak{f}_{n}(t)\right)
+ \MP\big(\Omega\setminus A_{n,\delta}^{\xi}\big) + \MP\big(\Omega\setminus B_{n}^{2 \xi_{x_{0},\xi,\delta}}\big),
\end{align*}
holds if $\varepsilon > \frac{\|\xi_{x_{0},\xi,\delta}\|_{\MP^{Z},2}}{\sqrt{n}}\big[2 + 32 (t + 1) \big(4 J(\FFF^{\Theta},\xi,1/4) + 5\sqrt{\ln(2)}\big)\big]$. 
Here $A^{\xi}_{n,\delta}$ is as in the display of Proposition \ref{compactification}, and $B_{n}^{2 \xi_{x_{0},\xi,\delta}}$ is defined according to \eqref{Restmenge}.
\item [2)] The sequence 
$
\big(\sqrt{n}\big[\inf\limits_{\theta\in\Theta}~\cR_{\rho^{\Phi}}(\hat{F}_{n,\theta}) - \inf\limits_{\theta\in\Theta}\cR_{\rho^{\Phi}}(F_{\theta})\big]\big)_{n\in\NNN}
$
is a uniformly tight sequence of random variables.
\end{itemize} 
\end{theorem}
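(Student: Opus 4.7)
The plan is to reduce the divergence-risk-measure SAA error to the ordinary risk-neutral SAA error for the auxiliary goal $G_{\Phi}((\theta,x),z)=\Phi^{*}(G(\theta,z)+x)-x$ on the extended but compact parameter set $\Theta\times I_{x_{0},\xi,\delta}$, and then invoke Theorem \ref{exponential bound}. First I would decompose
$$
\MP\big(\big\{|\cdot|\geq\varepsilon\big\}\big)\leq\MP\big(\big\{|\cdot|\geq\varepsilon\big\}\cap A^{\xi}_{n,\delta}\big)+\MP\big(\Omega\setminus A^{\xi}_{n,\delta}\big),
$$
and on the event $A^{\xi}_{n,\delta}$ use Proposition \ref{compactification} to identify the divergence SAA error with $\inf_{(\theta,x)\in\Theta\times I_{x_{0},\xi,\delta}}\frac{1}{n}\sum_{j=1}^{n}G_{\Phi}((\theta,x),Z_{j})-\inf_{(\theta,x)\in\Theta\times I_{x_{0},\xi,\delta}}\EEE[G_{\Phi}((\theta,x),Z_{1})]$. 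Everything on the event $A^{\xi}_{n,\delta}$ is thus an ordinary risk-neutral SAA error that Theorem \ref{exponential bound} is built to control.

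Next I would verify the hypotheses of Theorem \ref{exponential bound} for the auxiliary problem on $\Theta\times I_{x_{0},\xi,\delta}$. Borel measurability of $G_{\Phi}((\theta,x),\cdot)$ follows from (A 1) and continuity of $\Phi^{*}$; Lemma \ref{relationship} furnishes the square $\MP^{Z}$-integrable envelope $C_{\FFF^{\Theta}_{\Phi,I_{x_{0},\xi,\delta}}}=2\xi_{x_{0},\xi,\delta}$; and Lemma \ref{countable parameter subsets} supplies the countable subset $\overline{\Theta}\times(I_{x_{0},\xi,\delta}\cap\mathbb{Q})$ witnessing the analogue of (A 3) on the enlarged parameter space. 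Finiteness of $J(\FFF^{\Theta}_{\Phi,I_{x_{0},\xi,\delta}},2\xi_{x_{0},\xi,\delta},1/4)$, together with the concrete upper estimate in terms of $J(\FFF^{\Theta},\xi,\cdot)$, is delivered by Lemma \ref{relationship}.

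Then I would apply Theorem \ref{exponential bound} with envelope $2\xi_{x_{0},\xi,\delta}$. Because $\|2\xi_{x_{0},\xi,\delta}\|_{\MP^{Z},2}^{2}/2=2\|\xi_{x_{0},\xi,\delta}\|_{\MP^{Z},2}^{2}$, the sample size constraint matches. The exponential term becomes $\exp\big(-t^{2}\sqrt{n}\varepsilon/[16(t+1)(t+28)\|\xi_{x_{0},\xi,\delta}\|_{\MP^{Z},2}]\big)$ and the remainder $\MP(\Omega\setminus B_{n}^{2\xi_{x_{0},\xi,\delta}})$ appears directly. The threshold $\eta_{t,n}$ of Theorem \ref{exponential bound}, after substituting the bound from Lemma \ref{relationship} at $\delta=1/4$ (where $4\cdot(1/4)\sqrt{\ln 4}+\sqrt{2\ln 2}/4=(5/4)\sqrt{2\ln 2}$), simplifies to exactly $\|\xi_{x_{0},\xi,\delta}\|_{\MP^{Z},2}\big[2+32(t+1)(4J(\FFF^{\Theta},\xi,1/4)+5\sqrt{\ln 2})\big]/\sqrt{n}$, matching the stated $\varepsilon$-condition. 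Part 2) then follows from part 1) exactly as Theorem \ref{tightness} was deduced from Theorem \ref{exponential bound}: the two remainder probabilities $\MP(\Omega\setminus A^{\xi}_{n,\delta})$ and $\MP(\Omega\setminus B_{n}^{2\xi_{x_{0},\xi,\delta}})$ vanish as $n\to\infty$ by the strong law of large numbers (using $\xi(Z_{1})\in H^{\Phi^{*}}$ so that $\Phi^{*}\circ\xi$ is $\MP^{Z}$-integrable, and square integrability of $\xi_{x_{0},\xi,\delta}$), while the exponential term can be made arbitrarily small uniformly in $n$ by sending $\varepsilon\to\infty$. The main obstacle is not conceptual but arithmetic: the careful bookkeeping to confirm that the entropy-integral estimate of Lemma \ref{relationship} and the doubling of the envelope combine into precisely the constants $2$, $32$, $4$, and $5\sqrt{\ln 2}$ appearing in the stated threshold.
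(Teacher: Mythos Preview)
Your proposal is correct and follows essentially the same route as the paper: reduce via Proposition \ref{compactification} to a risk-neutral SAA problem for $G_{\Phi}$ on the compact parameter set $\Theta\times I_{x_{0},\xi,\delta}$, verify the hypotheses of Theorem \ref{exponential bound} through Lemmata \ref{relationship} and \ref{countable parameter subsets}, and then read off the constants. In fact you supply more detail than the paper does on the arithmetic matching of the threshold $\eta_{t,n}$; the only small point the paper makes explicit that you leave implicit is the verification that $\sup I_{x_{0},\xi,\delta}=|\inf I_{x_{0},\xi,\delta}|\vee|\sup I_{x_{0},\xi,\delta}|>0$ (needed to invoke Lemma \ref{relationship}), which follows from $x_{0}\in]1,2[$ and the nonnegativity of $\Phi,\Phi^{*}\!\restriction_{[0,\infty[}$.
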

\begin{proof}
Let $\overline{\Theta}\subseteq\Theta$ be from (A 3''). Combining Theorem \ref{optimized certainty equivalent} and \eqref{optimization approximativ} with Lemma \ref{countable parameter subsets}, we may observe 
\begin{align*}
&
\inf_{\theta\in\Theta}\cR_{\rho^{\Phi}}\big(F_{\theta}\big) = \inf_{(\theta,x)\in\overline{\Theta}\times\mathbb{Q}}\EEE\big[G_{\Phi}\big((\theta,x),Z_{1}\big)\big],\\
&
\inf_{\theta\in\Theta}~\cR_{\rho^{\Phi}}\big(\hat{F}_{n,\theta}\big) = \inf_{(\theta,x)\in\overline{\Theta}\times\mathbb{Q}}\frac{1}{n}\sum_{j=1}^{n}G_{\Phi}\big((\theta,x),Z_{j}\big)\quad\MP-\mbox{a.s.}\quad\mbox{for}~n\in\NNN.
\end{align*}
In particular, taking Proposition \ref{compactification} and completeness of $\OFP$ into account, 
$$
\inf_{\theta\in\Theta}~\cR_{\rho^{\Phi}}\big(\hat{F}_{n,\theta}\big) - \inf_{\theta\in\Theta}\cR_{\rho^{\Phi}}\big(F_{\theta}\big)\quad\mbox{is a random variable for}~n\in\NNN.
$$
Let $I_{x_{0},\xi,\delta}$ denote the interval defined in \eqref{Eingrenzung3}. By Proposition \ref{compactification} along with Lemma \ref{countable parameter subsets} we have 
\begin{align*}
&
\inf_{\theta\in\Theta}\cR_{\rho^{\Phi}}\big(F_{\theta}\big) 
= 
\inf_{(\theta,x)\in\overline{\Theta}\times I_{x_{0},\xi,\delta}\cap\mathbb{Q}}\EEE\big[G_{\Phi}\big((\theta,x),Z_{1}\big)\big],\\
&
\inf_{\theta\in\Theta}~\cR_{\rho^{\Phi}}\big(\hat{F}_{n,\theta}\big)(\omega) = 
\inf_{(\theta,x)\in\overline{\Theta}\times I_{x_{0},\xi,\delta}\cap\mathbb{Q}}\frac{1}{n}\sum_{j=1}^{n}G_{\Phi}\big((\theta,x),Z_{j}(\omega)\big)\quad\mbox{for}~n\in\NNN, \omega\in A_{n,\delta}^{\xi}. 
\end{align*} 
Finally, note that $\sup I_{x_{0},\xi,\delta} = |\sup I_{x_{0},\xi,\delta}|\vee |\inf I_{x_{0},\xi,\delta}| > 0$ holds. Now, we may apply Theorems \ref{exponential bound},~\ref{tightness} to the function class $\FFF^{\Theta}_{\Phi,I_{x_{0},\xi,\delta}}$, as defined in \eqref{neue Funktionsklassen}. Then in view of Lemma \ref{relationship} we may derive easily the statements of Theorem \ref{errors divergence risk measures}. 
\end{proof}
\begin{remark}
\label{simplifications II}
Let us point out some simplifications of Theorem \ref{errors divergence risk measures}.
\begin{itemize}
\item [1)] If the function $G$ is uniformly bounded by some positive constant $L$, then we may choose $\xi\equiv L$. Then $\Omega\setminus A_{n,\delta}^{\xi} = \Omega\setminus B_{n}^{2 \xi_{x_{0},\xi,\delta}} = \emptyset$ for every $n\in\NNN$.
\item [2)] Set $\zeta_{1} := \xi, \zeta_{2} := \Phi^{*}\circ\xi, \zeta_{3} := \xi_{x_{0},\xi,\delta}$. By Chebychev's inequality we have 
$$
\MP\big(\Omega\setminus A_{n,\delta}^{\xi}\big) + \MP\big(\Omega\setminus B_{n}^{2 \xi_{x_{0},\xi,\delta}}\big) \leq \sum_{i=1}^{3} {\vari[\zeta_{i}(Z_{1})^{2}]\over  n~\EEE[\zeta_{i}(Z_{1})^{2}]^{2}}\quad\mbox{for}~n\in\NNN
$$ 
if 
$\xi_{x_{0},\xi,\delta}$ is integrable of order $4$. Analogously to Remark \ref{Simplifications}, 3), we may even obtain exponential bounds 
\begin{align*}
&
\MP\big(\Omega\setminus A_{n,\delta}^{\xi}\big) + \MP\big(\Omega\setminus B_{n}^{2 \xi_{x_{0},\xi,\delta}}\big)\\
&\leq \sum_{i=1}^{3}\exp\left(- n \EEE[\zeta_{i}(Z_{1})^{2}]^{2}/(8 \delta_{\zeta_{i}}^{2})\right)\vee \exp\left(- n~\EEE[\zeta_{i}(Z_{1})^{2}]/(4\delta_{\zeta_{i}})\right)\quad\mbox{for}~n\in\NNN
\end{align*}
if $\EEE\big[\exp\big(\lambda \xi_{x_{0},\xi,\delta}(Z_{1})^{2}\big)\big]$ is finite for some $\lambda > 0$. With $M(\zeta_{i}^{2})$ as in Remark \ref{Simplifications}, 3), the inequalities hold for any $\delta_{\zeta_{i}} \geq M(\zeta_{i}^{2})$ ($i=1,2,3$).
\end{itemize}
\end{remark}
\begin{remark}
Drawing on Proposition \ref{Hoelder-Bedingung auxiliary}, or Proposition \ref{startingpoint} along with Remark \ref{Bemerkung II}, we may simplify directly Theorem \ref{errors divergence risk measures} in the cases that $G$ fulfills property (H), or has representation (PL). Moreover, Theorem \ref{errors divergence risk measures} may be improved in the way that the results provide explicit upper bounds for the involved term $J(\FFF^{\Theta},\xi,1/4)$. 
\par 
In the simplified situation of bounded $G$ error estimates have been developped in \cite{BartlTangpi2020} for  linear $G$ as already described in Remark \ref{Diskussion I}. As in this remark we want to emphasize again that universal unknown constants are involved in the bounds from \cite{BartlTangpi2020}. This shortcoming may be avoided by Theorem \ref{errors divergence risk measures} for this special type of objective $G$, just by using  Proposition \ref{Hoelder-Bedingung auxiliary}.
\end{remark}
Let us look at the specialization of Theorem \ref{errors divergence risk measures} in the important case that $\rho^{\Phi}$ is the \textit{Average Value at Risk}, also known as the \textit{Expected Shortfall}.
\begin{example}
\label{AVaR}
Let $\Phi$ be defined by $\Phi_{\alpha}(x) := 0$ for $x\leq 1/(1-\alpha)$ for some $\alpha\in ]0,1[$, and $\Phi(x) := \infty$ if $x > 1/(1-\alpha)$. Then $\Phi^{*}_{\alpha}(y) = y^{+}/(1-\alpha)$ for $y\in\R$. In particular $H^{\Phi^{*}}$ coincides with $L^{1}$, and we may recognize $\cR_{\rho^{\Phi}}$ as the so called \textit{Average Value at Risk} w.r.t. $\alpha$ (e.g. \cite{FoellmerSchied2011}, \cite{ShapiroEtAl}), i.e.
\begin{align*}
\cR_{\rho^{\Phi}}(F) 
&
= 
\frac{1}{1-\alpha}~\int_{\Flinks(\alpha)}^{1}\eins_{]0,1[}(u)~\Flinks(u)~du
\\ 
&
= \inf_{x\in\R}\left(\int_{0}^{1}\eins_{]0,1[}(u)~\frac{(\Flinks(u) + x)^{+}}{1-\alpha}~du - x\right)
\end{align*}
(see e.g. \cite{KainaRueschendorf2007}), where $\Flinks$ denotes the left-continuous quantile function of $F$. In this situation we have the following specifications of some particular assumptions in Theorem \ref{errors divergence risk measures}.
\begin{itemize}
\item (A 2) and (A 2'') are equivalent.
\item If $\xi:\RRR^{d}\rightarrow\RRR$ is any strictly positive square $\MP^{Z}$-integrable mapping, then 
$$
[\Phi^{*'}_{+}(\xi + a) + 1]\sqrt{\xi^{2} + a^{2}} = (2 - \alpha)\sqrt{\xi^{2} + a^{2}}/(1- \alpha)
$$ 
is already square $\MP^{Z}$-integrable for every $a > 0$.
\item The sets $A_{n,\delta}^{\xi}, B_{n}^{2\xi_{x_{0},\xi,\delta}}$ from Theorem \ref{errors divergence risk measures} may be simplified as follows
\begin{align*}
&
A_{n,\delta}^{\xi} = \Big\{\frac{1}{n}\sum_{j=1}^{n}\xi(Z_{j})\leq \EEE[\xi(Z_{1})] + (1-\alpha)\delta,\Big\},\\
& 
B_{n}^{2\xi_{x_{0},\xi,\delta}} = \Big\{\frac{1}{n}\sum_{j=1}^{n}\xi(Z_{j})^{2}\leq 2\EEE[\xi(Z_{1})^{2}] + x_{u}(x_{0},\xi,\delta)^{2}\Big\}
\end{align*}
where $x_{u}(x_{0},\xi,\delta)$ is as in \eqref{Eingrenzung2}.
\item Under condition (H) with $\beta = 1$ the uniform tightness result in Theorem \ref{errors divergence risk measures} is already known from \cite{GuiguesKraetschmerShapiro2018}.
\end{itemize}
\end{example}
\section{Proofs}
\label{Beweise}
\subsection{Proof of Theorem \ref{exponential bound}}
\label{zweite Schranke}
The main tool for the proof of Theorem \ref{exponential bound} is Bousquet's version of Talagrand's concentration inequalities. We shall repeat them first, tailored to our situation, for the convenience of the reader (see Theorem 3.3.9 in \cite{GineNickl2016}).
\begin{theorem}
\label{TalagrandBousquet}
Let $\FFF$ be some at most countable set of centered $\MP^{Z}$-integrable functions which is uniformly bounded by some positive constant $\overline{u}$. Assume that $\sigma^{2}\in ]0,\overline{u}]$ is an upper bound for the set $\{\vari(h)\mid h\in\FFF\}$. Then for every $n\in\NNN$ and any $\varepsilon > 0$ 
\begin{align*}
\MP\big(\big\{S_{n}\geq \EEE[S_{n}] + \varepsilon\big\}\big)
&\leq 
\exp\left(\frac{- 3\varepsilon~}{4 \overline{u}}~\ln\Big(1 + \frac{2\varepsilon \overline{u}}{6~\overline{u}~\EEE[S_{n}] + 3 n \sigma^{2}}\Big)\right)\\
&
\leq 
\exp\Big(\frac{-\varepsilon^{2}}{2\big(\overline{u}~2\EEE[S_{n}] + n\sigma^{2} + \overline{u}~\varepsilon/3\big)}\Big),
\end{align*}
where $S_{n} := \sup_{h\in\FFF}\big|\sum_{j=1}^{n}h(Z_{j})\big|$.
\end{theorem}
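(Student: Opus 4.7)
The plan is to follow Ledoux's entropy (Herbst) method, as sharpened by Bousquet and Massart, in order to establish the one-sided sub-exponential concentration for $S_{n}$. I would proceed in four stages.

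First, I would symmetrize to eliminate the absolute value. Since $\FFF$ is at most countable, centered, and uniformly bounded by $\overline{u}$, the class $\widetilde{\FFF}:=\FFF\cup\{-h\mid h\in\FFF\}$ is again countable, centered, bounded by $\overline{u}$, and each of its members has variance at most $\sigma^{2}$. For every realization
\[
S_{n}=\sup_{h\in\widetilde{\FFF}}\sum_{j=1}^{n} h(Z_{j}),
\]
so it suffices to prove the asserted bound for the random variable $Z:=\sup_{h\in\widetilde{\FFF}}\sum_{j=1}^{n} h(Z_{j})$ in place of $S_{n}$.

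Second, with $\phi(u):=e^{u}-u-1$, I would apply tensorization of entropy to $e^{\lambda Z}$ for $\lambda>0$ together with the one-coordinate modified log-Sobolev inequality. Writing $Z^{(j)}$ for the supremum obtained by replacing $Z_{j}$ with an independent copy $Z_{j}'$, this yields
\[
\mathrm{Ent}\bigl(e^{\lambda Z}\bigr)\leq \sum_{j=1}^{n}\EEE\Bigl[e^{\lambda Z}\,\phi\bigl(-\lambda (Z-Z^{(j)})\bigr)\Bigr].
\]
Choosing an (almost) maximizing $h^{\ast}\in\widetilde{\FFF}$ for $Z$, the bound $Z-Z^{(j)}\leq h^{\ast}(Z_{j})-h^{\ast}(Z_{j}')$ together with monotonicity of $\phi$ on the relevant range lets one control the right-hand side by $\phi(\lambda\overline{u})\,\EEE\bigl[\sum_{j}h^{\ast}(Z_{j})^{2}e^{\lambda Z}\bigr]$. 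The sharp Bousquet step then bounds the latter by $\phi(\lambda\overline{u})\,(n\sigma^{2}+2\overline{u}\,\EEE[Z])\,\EEE[e^{\lambda Z}]$, exploiting both the boundedness $|h^{\ast}|\leq\overline{u}$ and the centering $\EEE[h^{\ast}(Z_{j})]=0$.

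Third, setting $v:=n\sigma^{2}+2\overline{u}\,\EEE[Z]$ and $\psi(\lambda):=\log\EEE[e^{\lambda(Z-\EEE[Z])}]$, the preceding entropy bound translates into the differential inequality $\lambda\psi'(\lambda)-\psi(\lambda)\leq v\,\phi(\lambda\overline{u})/\overline{u}^{2}$. Herbst's integration of this inequality yields $\psi(\lambda)\leq v\,\phi(\lambda\overline{u})/\overline{u}^{2}$ for all $\lambda>0$. The claimed tail inequality then follows from the Chernoff estimate
\[
\MP\bigl(S_{n}\geq\EEE[S_{n}]+\varepsilon\bigr)\leq\inf_{\lambda>0}\exp\bigl(\psi(\lambda)-\lambda\varepsilon\bigr)
\]
together with the classical convex-conjugate computation for $u\mapsto(e^{u}-u-1)/u^{2}$, which converts the Poisson-type bound into the Bernstein-type form $\exp(-\varepsilon^{2}/[2(v+\overline{u}\varepsilon/3)])$.

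The main obstacle is the sharp variance proxy $v=n\sigma^{2}+2\overline{u}\,\EEE[Z]$: a naive self-bounding argument yields only a weaker proxy with larger constants, and it is precisely Bousquet's manipulation exploiting simultaneously the centering of $h^{\ast}$ and its uniform boundedness that produces the factor $2$ in front of $\overline{u}\EEE[Z]$ appearing in the statement. Once this sharp proxy is established, the Herbst integration, the Chernoff bound, and the conjugate computation are essentially mechanical.
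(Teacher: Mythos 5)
The paper offers no proof of this statement: it is imported verbatim (``tailored to our situation'') from \cite{GineNickl2016}, Theorem 3.3.9, i.e.\ it is Bousquet's version of Talagrand's concentration inequality, used as an external tool. So there is no internal argument to compare yours against; the benchmark is the literature proof you are sketching.

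Your outline correctly identifies the standard entropy-method architecture: symmetrization over $\FFF\cup(-\FFF)$ to remove the absolute value, tensorization of entropy combined with the symmetrized modified log-Sobolev inequality, Herbst integration of the resulting differential inequality, and the Chernoff bound followed by the Bennett-to-Bernstein conversion via $(1+u)\ln(1+u)-u\geq u^{2}/(2(1+u/3))$. The genuine gap is that the single step carrying all of the content --- passing from $\sum_{j}\EEE\big[e^{\lambda S_{n}}\phi\big(-\lambda(S_{n}-S_{n}^{(j)})\big)\big]$ to a differential inequality with the sharp variance proxy $v=n\sigma^{2}+2\overline{u}\,\EEE[S_{n}]$ --- is asserted rather than proved. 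Two concrete difficulties hide there. First, $\phi(-\lambda\,\cdot)$ is increasing on $[0,\infty[$ but decreasing on $]-\infty,0]$, so replacing $S_{n}-S_{n}^{(j)}$ by $h^{*}(Z_{j})-h^{*}(Z_{j}')$ is not a one-line monotonicity argument on the coordinates where $S_{n}<S_{n}^{(j)}$. Second, the bound $\EEE\big[\sum_{j}h^{*}(Z_{j})^{2}e^{\lambda S_{n}}\big]\leq\big(n\sigma^{2}+2\overline{u}\,\EEE[S_{n}]\big)\EEE[e^{\lambda S_{n}}]$ cannot be obtained by coordinatewise conditioning, since $h^{*}$ depends on all of $Z_{1},\dots,Z_{n}$; decoupling while retaining the constant $2$ (a cruder self-bounding argument only yields the weaker Klein--Rio/Massart variants) is precisely Bousquet's contribution and occupies several pages in the standard references. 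As it stands your text is an accurate road map to Bousquet's theorem rather than a proof of it; since the paper itself treats the statement as citable, the appropriate fix is to cite Bousquet or \cite{GineNickl2016} rather than to reprove it, or else to carry out the omitted step in full.
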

Now, we are prepared to show Theorem \ref{exponential bound}.
\medskip

\noindent
\textbf{Proof of Theorem \ref{exponential bound}:}\\[0.1cm]
As already discussed after introducing condition (A 3), we may replace in the optimization problems \eqref{optimization risk neutral}, \eqref{SAAriskneutral} the parameter space with the at most countable subset $\overline{\Theta}\subseteq\Theta$ from (A 3). Hence
\begin{align}
&\nonumber
\MP\Big(\Big\{\big|\inf\limits_{\theta\in\Theta}\frac{1}{n}\sum_{j=1}^{n}G(\theta,Z_{j}) - \inf\limits_{\theta\in\Theta}\EEE[G(\theta,Z_{1})]\big|\geq\varepsilon\Big\}~\cap~B_{n}^{\xi}\Big)\\
&\label{application Talagrand 2}
\leq 
\MP\Big(\Big\{\sup_{\theta\in\overline{\Theta}}\big|\frac{1}{n}\sum_{j=1}^{n}G(\theta,Z_{j}) - \EEE[G(\theta,Z_{1})]\big|\geq \varepsilon\Big\}~\cap~B_{n}^{\xi}\Big)\quad\mbox{for}~\varepsilon > 0.
\end{align}
By definition of $B_{n}^{\xi}$ we may observe for $\omega\in B_{n}^{\xi}$ and $j\in\{1,\ldots,n\}$
\begin{equation}
\label{firstupperbound}
\big|G\big(\theta,Z_{j}(\omega)\big)\big|\leq \big|\xi\big(Z_{j}(\omega)\big)\big|\leq w_{n} := \sqrt{ 2 n} \|\xi\|_{\MP^{Z,2}}.
\end{equation}
Then, setting $\phi_{n}(t) := (t\wedge w_{n})\vee (- w_{n})$ for $t \in\RRR$, we obtain
\begin{align*}
&
\big|\frac{1}{n}\sum_{j=1}^{n}G(\theta,Z_{j}(\omega)) - \EEE\big[G(\theta,Z_{1})\big]|\\
&\leq 
\big|\frac{1}{n}\sum_{j=1}^{n}\phi_{n}\big(G(\theta,Z_{j}(\omega))\big)  - \EEE\big[\phi_{n}\big(G(\theta,Z_{1})\big)\big]|
+ \big|\EEE\big[\phi_{n}\big(G(\theta,Z_{1})\big) - \EEE[G(\theta,Z_{1})]\big]\big|
\end{align*}
for $\theta\in\Theta$, and $\omega\in B_{n}^{\xi}$. The function $\phi_{n}$ satisfies 
the following properties
\begin{equation}
\label{ersteEig}
|\phi_{n}(t) - \phi_{n}(s)|\leq |t - s|\quad\mbox{for}~t,s\in\RRR,
\end{equation}
and for any integrable random variable $W$
\begin{align}
\big|\EEE[\phi_{n}(W)] - \EEE[W] \big|
&\nonumber \leq 
\big|\EEE[(-w_{n} - W)\eins_{]-\infty,-w_{n}]}(W)]\big| + 
\big|\EEE[(W-w_{n})\eins_{[w_{n},\infty[}(W)]\big|\\ 
&
\label{zweiteEig} 
= 
\EEE[(-w_{n} - W)^{+}] + 
\EEE[(W-w_{n})^{+}] 
\end{align}
Invoking (A 2), we may conclude from \eqref{zweiteEig}
$$
\sup_{\theta\in\Theta}\big|\EEE\big[\phi_{n}\big(G(\theta,Z_{1})\big)] - \EEE[G(\theta,Z_{1})]\big|\leq 2 \EEE\big[\big(\xi(Z_{1}) - w_{n})^{+}\big] =: \delta_{n}.
$$
Furthermore by square integrability of $\xi(Z_{1})$
\begin{align*}
n \delta_{n} 
&=
2 n \int_{w_{n}}^{\infty}\MP\big(\{\xi(Z_{1}) > t\big\}\big)~dt\\
&= 
\frac{\sqrt{2n}}{\|\xi\|_{\MP^{Z},2}}~\int_{\sqrt{2n} \|\xi\|_{\MP^{Z},2}}^{\infty} \sqrt{2n} \|\xi\|_{\MP^{Z},2}~\MP\big(\{\xi(Z_{1}) > t\big\}\big)~dt\\
&\leq 
\frac{\sqrt{2n}}{\|\xi\|_{\MP^{Z},2}}~\int_{0}^{\infty}  t~\MP\big(\{\xi(Z_{1}) > t\big\}\big)~dt
\leq 
\frac{\sqrt{n}}{\sqrt{2}} \|\xi\|_{\MP^{Z},2}.
\end{align*}
Therefore
$$
\sup_{\theta\in\Theta}\big|\EEE\big[\phi_{n}\big(G(\theta,Z_{1})\big) - \EEE[G(\theta,Z_{1})]\big]\big|\leq \frac{\|\xi\|_{\MP^{Z},2}}{\sqrt{2 n}}\quad\mbox{for}~n\in\NNN, 
$$
and thus for arbitrary $n\in\NNN$
\begin{align*}
&
\MP\Big(\Big\{\sup_{\theta\in\overline{\Theta}}\big|\frac{1}{n}\sum_{j=1}^{n}G(\theta,Z_{j}) - \EEE[G(\theta,Z_{1})]\big|\geq \varepsilon\Big\}~\cap~B_{n}^{\xi}\Big)\\
&\leq 
\MP\Big(\Big\{\sup_{\theta\in\overline{\Theta}}\big|\sum_{j=1}^{n}\phi_{n}\big(G(\theta,Z_{j})\big) - n \EEE\big[\phi_{n}\big(G(\theta,Z_{1})\big)\big]\big|\geq 
n\varepsilon - \sqrt{n} \frac{\|\xi\|_{\MP^{Z},2}}{\sqrt{2}})
\Big\}\Big).
\end{align*}
We want to apply Theorem \ref{TalagrandBousquet} to the function class $\FFF_{n}$ consisting of all mappings $\phi_{n}\big(G(\theta,\cdot)\big) - \EEE\big[\phi_{n}\big(G(\theta,Z_{1})\big)\big]$ with $\theta\in\overline{\Theta}$, and we set 
$$
S_{n} := \sup_{\theta\in\overline{\Theta}}\big|\sum_{j=1}^{n}\phi_{n}\big(G(\theta,Z_{j})\big) - n \EEE\big[\phi_{n}\big(G(\theta,Z_{1})\big)\big]\big|.
$$
Combining (A 2) with \eqref{ersteEig} and property $\phi_{n}(0) = 0$, we have 
\begin{align*}
&
\big\|\phi_{n}\big(G(\theta,\cdot)\big) - \phi_{n}\big(G(\vartheta,\cdot)\big) \big\|_{\MQ,2} 
\leq 
\big\|G(\theta,\cdot) - G(\vartheta,\cdot) \big\|_{\MQ,2} \quad\mbox{for}~\theta, \vartheta\in\Theta,~\MQ\in \cM_{\textrm{\tiny fin}},\\
& 
\big|\phi_{n}\big(G(\theta,z)\big)|\leq \xi(z)\quad\mbox{for}~\theta\in\Theta,~z\in\RRR^{d}.
\end{align*}
In particular $\xi$ is not only a positive upper envelope of $\FFF^{\Theta}$ but also 
of the function classes $\FFF^{\overline{\Theta}} := \{G(\theta,\cdot)\mid \theta\in\overline{\Theta}\}$ and 
$\overline{\FFF}_{n} := \big\{\phi_{n}\big(G(\theta,\cdot)\big)\mid \theta\in\overline{\Theta}\big\}$, and 
$$
N\big(\eta \|\xi\|_{\MQ,2},\overline{\FFF}_{n},L^{2}(\MQ)\big)\leq 
N\big(\eta \|\xi\|_{\MQ,2},\FFF^{\overline{\Theta}},L^{2}(\MQ)\big)
\leq 
N\big(\eta \|\xi\|_{\MQ,2}/2,\FFF^{\Theta},L^{2}(\MQ)\big)
$$
holds for $\eta > 0$ and $\MQ\in \cM_{\textrm{\tiny fin}}$. So in view of \eqref{estimation with integrals} we obtain
\begin{equation}
\label{thirdupperbound}
\EEE[S_{n}]\leq \sqrt{n}~ 32 ~\sqrt{2}~\|\xi\|_{\MP^{Z},2}~ J(\FFF^{\Theta},\xi,1/4)
\end{equation}
Since $\xi$ is an envelope of $\overline{\FFF}_{n}$, we also have
\begin{equation}
\label{fourthupperbound}
\sup_{\theta\in\overline{\Theta}}\big|\phi_{n}\big(G(\theta,z)\big) - \EEE\big[\phi_{n}\big(G(\theta,Z_{1})\big)\big]\big|
\leq 
u_{n} := (\sqrt{2n} + 1)~\|\xi\|_{\MP^{Z},2} 
\end{equation}
for $n\in\NNN, z\in\RRR^{d}$. Finally, setting $\sigma^{2} := \EEE\big[\xi(Z_{1})^{2}\big]$,
\begin{equation}
\label{fifthupperbound}
\EEE\Big[\big|\phi_{n}\big(G(\theta,z)\big) - \EEE\big[\phi_{n}\big(G(\theta,Z_{1})\big)\big]\big|^{2}\Big]
\leq 
\EEE\big[\phi_{n}\big(G(\theta,Z_{1})\big)^{2}\big] \leq \sigma^{2}
\end{equation}
for $\theta\in\overline{\Theta}$ and $n\in\NNN$.
\smallskip

Fix any $t > 0$, and let $n\in\NNN$ with $\varepsilon > \eta_{t,n}$ as well as 
$n\geq \|\xi\|_{\MP^{Z},2}^{2}/2$, where $\eta_{t,n}$ is as in the display of Theorem \ref{exponential bound}. Then $\sigma^{2}\leq u_{n}$, and with the help of \eqref{thirdupperbound} 
$$
n\varepsilon - \sqrt{n} \frac{\|\xi\|_{\MP^{Z},2}}{\sqrt{2}} = 
\frac{t}{t+1}~\big(n \varepsilon - \sqrt{n}\frac{\|\xi\|_{\MP^{Z},2}}{\sqrt{2}}\big)
+ \frac{n\varepsilon - \sqrt{n}\|\xi\|_{\MP^{Z},2}/\sqrt{2}}{t + 1}
\geq 
\frac{t n \varepsilon}{ 4 (t + 1)} + \EEE[S_{n}].
$$  
This implies 
\begin{align}
&\nonumber
\MP\Big(\Big\{\sup_{\theta\in\overline{\Theta}}\big|\frac{1}{n}\sum_{j=1}^{n}G(\theta,Z_{j}) - \EEE[G(\theta,Z_{1})]\big|\geq \varepsilon\Big\}~\cap~B_{n}^{\xi}\Big)\\
&\label{secondupperestimate}\leq 
\MP\Big(\Big\{\sup_{\theta\in\overline{\Theta}}\big|\sum_{j=1}^{n}\phi_{n}\big(G(\theta,Z_{j})\big) - n~\EEE\big[\phi_{n}\big(G(\theta,Z_{1})\big)\big]\big|\geq \frac{t n \varepsilon}{ 4 (t + 1)} + \EEE[S_{n}]\Big\}\Big).
\end{align}
Now, we are in the position to apply Theorem \ref{TalagrandBousquet} to $\FFF_{n}$ due to 
\eqref{thirdupperbound} - \eqref{fifthupperbound}, concluding
\begin{align*}
&
\MP\Big(\Big\{\sup_{\theta\in\overline{\Theta}}\big|\frac{1}{n}\sum_{j=1}^{n}\phi_{n}\big(G(\theta,Z_{j})\big) - \EEE\big[\phi_{n}\big(G(\theta,Z_{1})\big)\big]\big|\geq \frac{t n \varepsilon}{ 4 (t + 1)} + \EEE[S_{n}]\Big\}\Big)\\
&\leq 
\exp\left(- \frac{3 ~t~ n \varepsilon}{16~u_{n} (t+1)}~\ln\Big(1 + \frac{t~n~u_{n}~\varepsilon}{6 (t + 1)~(2 u_{n}~\EEE[S_{n}] + n~\sigma^{2})}\Big)\right)\\
&\leq 
\exp\left(\frac{-3 t^{2} n^{2}\varepsilon^{2}}{8 (t + 1)^{2}[24 u_{n} \EEE[S_{n}] + 12 n \sigma^{2} + t u_{n} n \varepsilon/(t + 1)]}\right).
\end{align*}
Furthermore $\sigma^{2} = \|\xi\|_{\MP^{Z},2}^{2} < \sqrt{n}\varepsilon\|\xi\|_{\MP^{Z},2}$, 
and $\EEE[S_{n}] < n\varepsilon/(t + 1)$ by \eqref{thirdupperbound}. Then the statement of Theorem \ref{exponential bound} may be derived easily from \eqref{application Talagrand 2}
along with \eqref{secondupperestimate}.
\hfill$\Box$
\subsection{Proof of Proposition \ref{Hoelder-Bedingung auxiliary}}
\label{Beweis Hoelder Hilfsresultat}
Condition (H) allows to verify (A 3) for any at most countable dense subset $\overline{\Theta}$ of the compact set $\Theta$. 
\par 
Let $\overline{\theta}\in\Theta$ with $G(\overline{\theta},\cdot)$ being square $\MP^{Z}$-integrable. Then for any $\theta\in\Theta$ assumption (H) implies
$$
|G(\theta,z)|\leq |G(\overline{\theta},z)| + C(z)~d_{m,2}(\theta,\overline{\theta})^{\beta}\quad (z\in\RRR^{d}).
$$ 
In particular, $\xi := C(\cdot)~\Delta(\Theta)^{\beta} + |G(\overline{\theta},\cdot)|$ is square $\MP^{Z}$-integrable and satisfies (A 2). Hence it remains to show the inequalities for the terms $J(\FFF^{\Theta},\xi,\delta)$.
\medskip

For a totally bounded metric $d$ on $\Theta$ we shall use the symbol $N\big(\eta,\Theta,d\big)$ to denote the minimal number to cover $\Theta$ by closed $d$-balls with radius $\eta > 0$ and centers in $\Theta$.
\par
It may be verified easily that the restriction $d_{m,2}^{\beta}$ to $\Theta$ defines a totally bounded and complete metric on $\Theta$. By (H) we may observe 
$$
\|G(\theta,\cdot) - G(\vartheta,\cdot)\|_{\MQ,2}\leq \|C\|_{\MQ,2}~ d_{m,2}(\theta,\vartheta)^{\beta}\quad\mbox{for}~\MQ\in \cM_{\textrm{\tiny fin}},~\mbox{and}~\theta,\vartheta\in\Theta.
$$ 
Hence we obtain 
$$
N\big(\|\xi\|_{\MQ,2}~\eta,\FFF^{\Theta},L^{2}(\MQ)\big)\leq 
N\big(\Delta(\Theta)^{\beta} \eta,\Theta,d_{m,2}^{\beta}\big)\quad\mbox{for all}~\MQ\in \cM_{\textrm{\tiny fin}},~\eta > 0.
$$
Moreover, we have $\Theta\subseteq \{\gamma\in\RRR^{m}\mid d_{m,2}(\gamma,\overline{\theta})\leq \Delta(\Theta)\}$. 
Then 
we obtain from Lemma 2.5 in \cite{vandeGeer2000} that for every $\eta > 0$
\begin{eqnarray*}
N\big(\Delta(\Theta)^{\beta}~ \eta,\Theta,d_{m,2}^{\beta}\big)
\leq 
N\big(\Delta(\Theta)~ \eta^{1/\beta},\Theta,d_{m,2}\big)
\leq 
(8 + \eta^{1/\beta})^{m}/\eta^{m/\beta}.
\end{eqnarray*}
This implies for any $\delta\in ]0,1/2]$, using change of variable formula
\begin{align*}
J(\FFF^{\Theta},\xi,\delta)
&\leq 
\int_{0}^{\delta}\sqrt{\frac{m}{\beta}~\ln\big(2^{\beta/m}[8 + \delta^{1/\beta}]^{\beta}/\eta\big)}~d\eta\\
&\leq
\delta \int_{0}^{1}\sqrt{\frac{m}{\beta}~\ln\Big(\frac{2^{[(3 m + 1)\beta + m]/m}/\delta}{\eta}\Big)}~d\eta.
\end{align*}
Now, we may invoke \eqref{Integralabschaetzung} with $v := m/\beta$ and $K := 2^{[(3 m + 1)\beta + m]/m}/\delta$ 
to derive the remaining part of 
Proposition \ref{Hoelder-Bedingung auxiliary}.
\hfill$\Box$

\subsection{Proof of Proposition \ref{startingpoint}}
\label{proof of Proposition starting point}
We start the proof of Proposition \ref{startingpoint} 
with the following observation induced by representation (PH).
\begin{equation}
\label{startingoberservation}
G(\theta,z)  
= 
\sum_{i = 1}^{r}f^{i}(\theta,z)~G^{i}(\theta,z)\quad\mbox{for}~\theta\in\Theta, z\in\RRR^{d}.
\end{equation}
The mappings $\Lambda_{il}(\theta,\cdot), G^{i}(\theta,\cdot)$ are Borel measurable by assumption, in particular Borel measurability of $f^{i}(\theta,\cdot)$ holds. Hence by \eqref{startingoberservation} the assumption (A 1) is fulfilled. Moreover, let the mappings $\xi_{1},\ldots,\xi_{r},\xi$ be defined as in the display of Proposition \ref{startingpoint} with square $\MP^{Z}$-integrable $G^{1}(\overline{\theta},\cdot),\ldots,G^{r}(\overline{\theta},\cdot)$ for some $\overline{\theta}\in\Theta$. Then by construction, the mapping $\xi$ is also square $\MP^{Z}$-integrable because the mappings $\xi_{1},\ldots,\xi_{r}$ are assumed to be bounded. In particular it satisfies (A 2) by \eqref{startingoberservation} again. Therefore it remains to verify the claimed upper estimates of the terms $J(\FFF^{\Theta},\xi,\delta)$.
\medskip

We need some further preparation from the theory of empirical process theory. 
To recall, define for a collection $\cB$ of subsets of $\RRR^{d}$, and $z_{1},\dots,z_{n}\in\RRR^{d}$
$$
\Delta_{n}(\cB,z_{1},\dots,z_{n})~:=~\mbox{cardinality of}~\left\{B~\cap~\{z_{1},\dots,z_{n}\}\mid B\in\cB\right\}.
$$
Then 
$$
V(\cB)~:=~\inf~\Big\{n\in\NNN\mid \max_{z_{1},\dots,z_{n}\in\RRR^{d}}\Delta_{n}(\cB,z_{1},\dots,z_{n}) < 2^{n}\Big\}\quad(\inf\emptyset~:=~\infty)
$$
is known as the \textit{index} of $\cB$ (see \cite{vanderVaartWellner1996}, p. 135). In case of finite index, $\cB$ is known as a so called \textit{VC-class} (see \cite{vanderVaartWellner1996}, p. 135). The concept of VC-classes may be carried over from sets to functions in the following way. A set $\FFF$ of Borel measurable real valued functions on $\RRR^{d}$ is defined to be a \textit{VC-subgraph class} or a \textit{VC-class} if 
the corresponding collection $\big\{\{(z,t)\in\RRR^{d}\times\RRR\mid h(z) > t\}~\mid~h\in\FFF\big\}$ of subgraphs is a {\textit VC-class} (\cite{vanderVaartWellner1996}, p. 141). Its {\textit VC-index} $V(\FFF)$ coincides with the index of the subgraphs. The significance of VC-subgraph classes stems from the fact 
that there there exists some  universal constant $K_{\textrm{\tiny VC}} \geq 1$ such that for every VC-subgraph class $\FFF$ and any $\MP^{Z}$-integrable positive envelope $C_{\FFF}$ of $\FFF$
\begin{equation*}
\sup_{\MQ\in\cM_{\textrm{\tiny fin}}}N\big(\varepsilon \|C_{\FFF}\|_{\MQ,2},\FFF,L^{2}(\MQ)\big)\leq K_{\textrm{\tiny VC}}~V(\FFF)~(16 e)^{V(\FFF)}\big(1/\varepsilon\big)^{2 [V(\FFF) - 1]}\quad\mbox{for}~\varepsilon\in ]0,1[
\end{equation*}
(see \cite[Theorem 9.3]{Kosorok2008} or \cite[Theorem 2.6.7]{vanderVaartWellner1996}). 
\par
For our purposes we are interested in more explicit upper estimations of the covering numbers. This may be achieved upon Corollary 3 in \cite{Haussler1995} which we recall now for the convenience of the reader. 
\begin{proposition}
\label{Haussler}
Let $\FFF = \{\eins_{B}\mid B\in\cB\}$, where $\cB$ denotes some VC-class. Then 
$$
\sup_{\MQ\in\cM_{\textrm{\tiny fin}}}N\big(\varepsilon,\FFF,L^{1}(\MQ)\big)\leq e~V(\FFF)~\big(2 e/\varepsilon\big)^{V(\FFF) - 1}\quad\mbox{for}~\varepsilon\in ]0,1[.
$$
\end{proposition}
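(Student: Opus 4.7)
Since $\FFF$ consists of indicators, $\|\eins_B-\eins_{B'}\|_{\MQ,1}=\MQ(B\triangle B')$, so the $L^{1}(\MQ)$-pseudometric on $\FFF$ coincides with the symmetric-difference pseudometric $d_{\MQ}$ on $\cB$, and $V(\FFF)=V(\cB)$. Since a covering number is dominated by the associated packing number, it suffices to uniformly bound, for $\MQ\in\cM_{\textrm{\tiny fin}}$, the maximal cardinality $M$ of a family $B_{1},\ldots,B_{M}\in\cB$ satisfying $\MQ(B_{i}\triangle B_{j})\geq\varepsilon$ for $i\neq j$, by the right-hand side of the claimed inequality. Everything that follows therefore fixes an $\varepsilon$-separated system and aims at an upper bound on its size.

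The plan is a random-projection (random-sample) argument. Draw i.i.d.\ samples $Z_{1},\ldots,Z_{n}$ from $\MQ$ with $n$ to be optimized later. For distinct $i,j$, the probability that $B_{i}$ and $B_{j}$ carve out the same subset of $\{Z_{1},\ldots,Z_{n}\}$ equals $(1-\MQ(B_{i}\triangle B_{j}))^{n}\leq(1-\varepsilon)^{n}\leq e^{-n\varepsilon}$, so the expected number of ordered pairs $(i,j)$ with $i\neq j$ that share a trace is at most $M(M-1)\,e^{-n\varepsilon}$. On the other hand, applied to the VC-class $\cB$ restricted to $\{Z_{1},\ldots,Z_{n}\}$, the Sauer--Shelah lemma yields that the number $T_{n}$ of distinct traces is deterministically bounded by $\sum_{k=0}^{V(\cB)-1}\binom{n}{k}$, which in turn is at most $\bigl(en/(V(\cB)-1)\bigr)^{V(\cB)-1}$ as soon as $n\geq V(\cB)-1$.

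The two estimates meet through a counting step. For any realisation of the sample, the $M$ sets partition into $T_{n}$ trace classes, and convexity gives at least $M^{2}/T_{n}-M$ ordered pairs with coinciding trace. Taking expectations and combining with the preceding bounds produces the master inequality
\[
\frac{M^{2}}{\EEE[T_{n}]}-M\;\leq\; M(M-1)\,e^{-n\varepsilon}.
\]
Solving for $M$ and choosing $n$ of order $(V(\cB)-1)/\varepsilon$, so that $e^{-n\varepsilon}$ beats the Sauer--Shelah factor, yields an estimate of the form $M\leq C\cdot V(\cB)\cdot (C'/\varepsilon)^{V(\cB)-1}$ for absolute constants $C,C'$.

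The main obstacle is extracting exactly the sharp constants $e$ and $2e$ stated in the proposition, rather than some larger absolute constants. A direct execution of the outline above yields looser numerical factors; Haussler's improvement rests on a more delicate double-counting (replacing the crude pigeonhole step by one that also tracks how traces split under the removal of a single coordinate) together with the careful choice $n=\lceil 2(V(\cB)-1)/\varepsilon\rceil$ and a separate treatment of the boundary regime $V(\cB)=1$ and small $n$. I would follow Haussler's original bookkeeping to recover the precise constants; the random-projection plus Sauer--Shelah skeleton described above is the heart of the argument, and the sharp form emerges from this more careful optimisation.
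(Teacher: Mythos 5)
First, note that the paper does not prove this statement at all: Proposition \ref{Haussler} is quoted verbatim as Corollary 3 of \cite{Haussler1995}, so there is no in-paper argument to compare against, and the only question is whether your sketch would deliver the stated bound on its own. It would not, and the obstruction is not merely one of constants. Your master inequality $M^{2}/\EEE[T_{n}]-M\leq M(M-1)e^{-n\varepsilon}$ is correct (Cauchy--Schwarz within trace classes plus Jensen), and it rearranges to $M\leq \EEE[T_{n}]/\big(1-\EEE[T_{n}]e^{-n\varepsilon}\big)$, which is only usable when $\EEE[T_{n}]e^{-n\varepsilon}<1$. Writing $d:=V(\cB)-1$ and using Sauer--Shelah, $\EEE[T_{n}]\leq (en/d)^{d}$, the choice $n=cd/\varepsilon$ gives $\EEE[T_{n}]e^{-n\varepsilon}\leq \big(c/(\varepsilon e^{c-1})\big)^{d}$, which is $<1$ only if $c/e^{c-1}<\varepsilon$; for any fixed $c$ this fails once $\varepsilon$ is small. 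One is therefore forced to take $n$ of order $d\ln(1/\varepsilon)/\varepsilon$, and the skeleton only yields the classical Dudley-type bound $M\leq \big(C\ln(1/\varepsilon)/\varepsilon\big)^{d}$ with a parasitic logarithm --- not $e\,V(\FFF)\,(2e/\varepsilon)^{d}$.

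Removing that logarithm is precisely the content of Haussler's paper, and it is achieved by a genuinely different device rather than by tighter bookkeeping of the random-projection argument: the key lemma is that the one-inclusion (unit-distance) graph of any finite set system of VC dimension $d$ has edge density at most $d$ (proved by a shifting argument), and the packing bound then follows by counting, over a random sample of $n+1$ points, the pairs of packing elements whose traces agree on the first $n$ coordinates and differ in the last one. Your parenthetical about tracking ``how traces split under the removal of a single coordinate'' gestures at this, but the essential ingredient is absent from the proposal and is explicitly deferred to the reference, so as a self-contained proof there is a genuine gap. Since the paper itself simply cites \cite{Haussler1995}, citing rather than proving would have been entirely acceptable here; what should be corrected is the claim that the sharp form ``emerges from this more careful optimisation'' of the Sauer--Shelah skeleton, because it does not.
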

Once we have upper estimates for covering numbers of VC-classes w.r.t. the $L^{1}$-norms, it is well-known from the theory of empirical process theory how to derive upper estimates for covering numbers of VC-subgraph classes w.r.t. the $L^{2}$-norm. We obtain the following result.
\begin{corollary}
\label{VC-subgraph covering numbers}
Let $\FFF$ be any VC-subgraph class with some arbitrary positive envelope $C_{\FFF}$. Then the inequality
\begin{equation*}
\sup_{\MQ\in\cM_{\textrm{\tiny fin}}}N\big(\varepsilon \|C_{\FFF}\|_{\MQ,2},\FFF,L^{2}(\MQ)\big)\leq e~V(\FFF)~\big(4 e^{1/2}/\varepsilon\big)^{2 [V(\FFF) - 1]}\quad\mbox{for}~\varepsilon\in ]0,1[
\end{equation*}
holds. 
\end{corollary}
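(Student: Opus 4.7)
The plan is to reduce the $L^2(\MQ)$-covering-number bound for $\FFF$ to an $L^1$-covering-number bound for the indicator class associated with the subgraphs of $\FFF$, and then to invoke Proposition \ref{Haussler}. Since $\FFF$ is VC-subgraph with index $V(\FFF)$, the collection
$$
\cB := \big\{\{(z,t)\in\RRR^{d}\times\RRR : f(z)>t\} : f\in\FFF\big\}
$$
of subgraphs is, by definition, a VC-class with $V(\cB)=V(\FFF)$, so $\{\eins_{B} : B\in\cB\}$ falls within the scope of Proposition \ref{Haussler}.

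For fixed $\MQ\in\cM_{\textrm{\tiny fin}}$, I would introduce the probability measure $\mu_{\MQ}$ on $\cB(\RRR^{d}\times\RRR)$ defined by
$$
d\mu_{\MQ}(z,t) := \frac{C_{\FFF}(z)\,\eins_{[-C_{\FFF}(z),C_{\FFF}(z)]}(t)}{2\,\|C_{\FFF}\|_{\MQ,2}^{2}}\,dt\, d\MQ(z),
$$
whose total mass equals $1$ by Fubini. For $f,g\in\FFF$, writing $B_{f}$ for the subgraph of $f$, the pointwise bound $|f|\vee|g|\leq C_{\FFF}$ gives $\int_{-C_{\FFF}(z)}^{C_{\FFF}(z)}|\eins_{\{f(z)>t\}}-\eins_{\{g(z)>t\}}|\,dt=|f(z)-g(z)|$, whence
$$
\int|\eins_{B_{f}}-\eins_{B_{g}}|\,d\mu_{\MQ}=\frac{\int C_{\FFF}\,|f-g|\,d\MQ}{2\|C_{\FFF}\|_{\MQ,2}^{2}}.
$$
Combining this with the elementary estimate $(f-g)^{2}\leq 2\,C_{\FFF}\,|f-g|$ (again using $|f|\vee|g|\leq C_{\FFF}$) yields the key comparison
$$
\|f-g\|_{\MQ,2}^{2}\leq 4\,\|C_{\FFF}\|_{\MQ,2}^{2}\,\int|\eins_{B_{f}}-\eins_{B_{g}}|\,d\mu_{\MQ}.
$$

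This comparison shows that any $(\varepsilon^{2}/4)$-cover of $\{\eins_{B}:B\in\cB\}$ in $L^{1}(\mu_{\MQ})$ pulls back, through the map $B_{f}\mapsto f$, to an $\varepsilon\|C_{\FFF}\|_{\MQ,2}$-cover of $\FFF$ in $L^{2}(\MQ)$. Applying Proposition \ref{Haussler} with parameter $\varepsilon^{2}/4\in\,]0,1[$ then yields the bound $e\,V(\FFF)\,(8e/\varepsilon^{2})^{V(\FFF)-1}$, which is dominated by $e\,V(\FFF)\,(16e/\varepsilon^{2})^{V(\FFF)-1}=e\,V(\FFF)\,(4e^{1/2}/\varepsilon)^{2[V(\FFF)-1]}$, as claimed.

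The main subtlety is that $\mu_{\MQ}$ has a continuous component in its second coordinate and hence does not lie in $\cM_{\textrm{\tiny fin}}$ on $\RRR^{d+1}$, whereas Proposition \ref{Haussler} is quoted only for finitely supported measures. This gap will be closed by the standard observation that for any finite subfamily $B_{f_{1}},\ldots,B_{f_{N}}\in\cB$ the values $\mu_{\MQ}(B_{f_{i}}\triangle B_{f_{j}})$ are reproduced exactly by the finitely supported measure placing one atom of appropriate mass in each nonempty cell of the finite partition of $\RRR^{d+1}$ generated by the $B_{f_{i}}$; consequently the supremum in Proposition \ref{Haussler} already majorises the $L^{1}(\mu_{\MQ})$-covering numbers of $\{\eins_{B}:B\in\cB\}$, which is the only point at which one leaves the strict setting of finite-support measures.
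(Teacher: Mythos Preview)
Your proof is correct and follows essentially the same route as the paper's, which explicitly mimics the textbook arguments of Kosorok (Theorem~9.3) and van der Vaart--Wellner (Theorem~2.6.7). The paper organises the reduction in two steps --- first an $L^{2}(\MQ)\to L^{1}(\MQ_{C_{\FFF}})$ comparison via the $C_{\FFF}$-reweighted measure $\MQ_{C_{\FFF}}(B):=\EEE_{\MQ}[\eins_{B}C_{\FFF}]/\EEE_{\MQ}[C_{\FFF}]$, then an $L^{1}$ comparison of $\FFF$ with the subgraph indicator class --- whereas you merge these into a single comparison through the product-type measure $\mu_{\MQ}$ on $\RRR^{d+1}$. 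Your one-step version is slightly more efficient and actually delivers the sharper constant $8e/\varepsilon^{2}$ before you relax it to $16e/\varepsilon^{2}$; the paper's two-step route loses an additional factor of~$2$ and lands exactly on $16e/\varepsilon^{2}$. Your final paragraph on the $\cM_{\textrm{\tiny fin}}$ restriction is well taken (the paper's sketch faces the same issue, since the measure on $\RRR^{d+1}$ implicit in its first step is likewise not finitely supported); in fact Haussler's original Corollary~3 is stated for arbitrary probability measures, so no approximation is strictly needed, but your cell-partition argument combined with the slack you have already built in also suffices.
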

\begin{proof}
The proof mimicks the proof of Theorem 9.3 in \cite{Kosorok2008} or the proof of Theorem 2.6.7 in \cite{vanderVaartWellner1996}.
\par
Let $\FFF_{\cB} := \{\eins_{B}\mid B\in\cB\}$, where $\cB$ denotes the collection of subgraphs corresponding to $\FFF$. In the first step one may obtain
\begin{equation}
\label{firstStep}
N\big(\varepsilon \|C_{\FFF}\|_{\MQ,1},\FFF,L^{1}(\MQ)\big)\leq N\big(\varepsilon/2,\FFF_{\cB},L^{1}(\MQ)\big)\quad\mbox{for}~\MQ\in\cM_{\textrm{\tiny fin}},~\varepsilon\in ]0,1[.
\end{equation}
In the second step any $\MQ\in\cM_{\textrm{\tiny fin}}$ is associated with the probability measure $\MQ_{C_{\FFF}}\in\cM_{\textrm{\tiny fin}}$, defined by 
$\MQ_{C_{\FFF}}(B) := \EEE_{\MQ}[\eins_{B}C_{\FFF}]/\EEE_{\MQ}[C_{\FFF}]$. Then it can be shown that 
\begin{equation}
\label{secondStep}
N\big(\varepsilon \|C_{\FFF}\|_{\MQ,2},\FFF,L^{2}(\MQ)\big)\leq 
N\big(\varepsilon^{2} \|C_{\FFF}\|_{\MQ_{C_{\FFF}},1}/4,\FFF,L^{1}(\MQ_{C_{\FFF}})\big)
\end{equation}
holds for $\varepsilon\in ]0,1[$. Then, combining \eqref{firstStep} and \eqref{secondStep} with Haussler's result Proposition \ref{Haussler}, we may complete the proof.
\end{proof}
In view of (\ref{startingoberservation}) the following auxiliary results reveals that the classes $\FFF^{\Theta}$ is built upon specific VC-subgraph classes. This will be crucial for deriving the result of Proposition \ref{startingpoint}. 
\begin{lemma}
\label{starting VC-classes I}
For every $i\in\{1,\ldots,r\}$ and any nonvoid $\overline{\Theta}\subseteq\Theta$, the 
set $\FFF_{i,\overline{\Theta}}$ consisting of all $f^{i}(\theta,\cdot)$ with 
$\theta\in\overline{\Theta}$ is a VC-subgraph class with index $V(\FFF_{i,\overline{\Theta}})\leq (m + 2)~ s_{i} + 1$.
\end{lemma}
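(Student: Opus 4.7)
My plan is to reduce the VC-subgraph problem for $\FFF_{i,\overline{\Theta}}$ to a VC-dimension bound for a class of "upper-right corners'' in $\RRR^{s_{i}}$, pulled back along a fixed linear map.

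First I would exploit that $f^{i}(\theta,\cdot)$ is $\{0,1\}$-valued, so its subgraph equals $\{(z,t)\in\RRR^{d}\times\RRR : t<0\}\cup\bigl(A^{i}_{\theta}\times[0,1)\bigr)$, where $A^{i}_{\theta}:=\{z\in\RRR^{d} : f^{i}(\theta,z)=1\}$. A routine check shows that if $n$ points $(z_{j},t_{j})$ are shattered by these subgraphs, then necessarily all $t_{j}\in[0,1)$ (any $t_{j}\geq 1$ forces label $0$, any $t_{j}<0$ forces label $1$) and the $z_{j}$ must be pairwise distinct (duplicates force coinciding labels); shattering then reduces to shattering $\{z_{1},\dots,z_{n}\}$ by the set system $\cA_{i}:=\{A^{i}_{\theta} : \theta\in\overline{\Theta}\}$. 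Consequently $V(\FFF_{i,\overline{\Theta}})\leq V(\cA_{i})$, and it suffices to prove $V(\cA_{i})\leq s_{i}+1$.

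Next I would unwind the definition of $A^{i}_{\theta}$ via linearity. Writing $c_{l}(\theta):=L^{i}_{l}(T(\theta))+a^{i}_{l}$, the condition $L^{i}_{l}(T(\theta)+z)+a^{i}_{l}\in I_{il}$ becomes $L^{i}_{l}(z)\in I_{il}-c_{l}(\theta)$. Introducing the fixed linear map $\phi:\RRR^{d}\to\RRR^{s_{i}}$, $\phi(z)=(L^{i}_{1}(z),\dots,L^{i}_{s_{i}}(z))$, we obtain $A^{i}_{\theta}=\phi^{-1}(B_{\theta})$ where $B_{\theta}=\prod_{l=1}^{s_{i}}J_{l}(\theta)$ and each $J_{l}(\theta)$ is a half-line of the form $\{y : y\geq -c_{l}(\theta)\}$ or $\{y : y>-c_{l}(\theta)\}$ according to whether $I_{il}$ is closed or open at $0$. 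Each $B_{\theta}$ is therefore an upper-right corner in $\RRR^{s_{i}}$. Since pullback by a fixed map cannot enlarge the VC-index (shattering of $n$ points $z_{1},\dots,z_{n}$ by $\{\phi^{-1}(B_{\theta})\}$ forces shattering of the $\phi(z_{j})$ by $\{B_{\theta}\}$, regardless of possible collisions among the $\phi(z_{j})$), it is enough to bound $V(\cC)\leq s_{i}+1$ for the class $\cC$ of all such corners in $\RRR^{s_{i}}$.

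For this final step I would use a clean pigeonhole argument. Suppose for contradiction that $\cC$ shatters $s_{i}+1$ points $y^{(1)},\dots,y^{(s_{i}+1)}\in\RRR^{s_{i}}$. For each index $k$, pick a corner in $\cC$ selecting exactly $\{y^{(j)} : j\neq k\}$; that $y^{(k)}$ is excluded means some coordinate $l(k)\in\{1,\dots,s_{i}\}$ witnesses failure of the defining half-line inequality, while every other $y^{(j)}$ satisfies it. This forces $y^{(k)}_{l(k)}$ to be strictly smaller than $y^{(j)}_{l(k)}$ for all $j\neq k$, so $y^{(k)}$ is the unique strict minimizer of the $l(k)$-th coordinate among the $s_{i}+1$ points. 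The map $k\mapsto l(k)$ sends $s_{i}+1$ inputs into only $s_{i}$ values, so two indices $k\neq k'$ would share the same $l$, contradicting uniqueness of the strict minimum. Hence $V(\cC)\leq s_{i}+1$, completing the proof.

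The main obstacle will be packaging the reduction $V(\FFF_{i,\overline{\Theta}})\leq V(\cA_{i})$ cleanly while handling the mixed strict/non-strict half-lines coming from the two possible shapes of $I_{il}$; the pigeonhole core works uniformly in both cases, but one needs to check that "failure of the inequality'' still yields a strict coordinate inequality when the half-line is closed.
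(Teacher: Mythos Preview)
Your argument is correct and follows the same skeleton as the paper's proof: both reduce to the VC-index of a class of axis-parallel ``corners'' in $\RRR^{s_{i}}$ pulled back along a fixed linear map. The paper uses $z\mapsto(-L^{i}_{1}(z),\dots,-L^{i}_{s_{i}}(z))$ and lower-left corners $\prod_{l}J_{l}$ with $J_{l}\in\{]-\infty,x],\,]-\infty,x[\}$, then cites Dudley (Corollary~4.5.11) for $V(\cB_{s_{i}})=s_{i}+1$, Kosorok (Lemma~9.7) for preservation of the VC-index under pullback, and Kosorok (Lemma~9.8) for the passage from indicator functions to their subgraphs. You instead work with upper-right corners and replace each citation by a short self-contained argument: an explicit subgraph description for $\{0,1\}$-valued functions, the trivial pullback observation, and a direct pigeonhole proof of the corner bound. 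The gain is that your proof needs no external references; the cost is a few more lines.

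Your closing worry is unfounded: in both the closed case $J=[{-c},\infty)$ and the open case $J=({-c},\infty)$, ``$y^{(k)}$ fails while $y^{(j)}$ passes'' gives $y^{(k)}_{l}<-c\leq y^{(j)}_{l}$ respectively $y^{(k)}_{l}\leq -c< y^{(j)}_{l}$, so the strict inequality $y^{(k)}_{l}<y^{(j)}_{l}$ holds uniformly and the pigeonhole goes through unchanged.
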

\begin{proof}
Let $\overline{\Theta}\subseteq\Theta$ nonvoid, and let $i\in\{1,\ldots,r\}$. 
Set $\overline{I}_{il} := \{t - a^{i}_{l}\mid t\in I_{il}\}$, and $B_{il}(\theta) := \Lambda_{il}(\theta,\cdot)^{-1}(\overline{I}_{il})$ for $i\in\{1,\ldots,r\}$, $l\in\{1,\ldots,s_{i}\}$, $\theta\in\Theta$. Furthermore, $\{e_{1},\ldots,e_{m}\}$ denotes the standard basis of $\RRR^{m}$.
\par
The linear hull of $\{\Lambda_{il}(\theta,\cdot)\mid\theta\in\overline{\Theta}\}$ is spanned by $\big\{\Lambda_{il}(y,\cdot)\mid y\in\{e_{1},\ldots,e_{m},0\}\big\}$ so that its dimension does not exceed $m + 1$. Thus by Lemma 2.6.15 in \cite{vanderVaartWellner1996} the set $\{\Lambda_{il}(\theta,\cdot)\mid\theta\in\overline{\Theta}\}$ is a VC-subgraph class with index $\leq m + 3$. Moreover, $\eins_{\overline{I}_{il}}$ is a monotone function, and hence $\{\eins_{B_{il}(\theta)}\mid\theta\in\overline{\Theta}\} = \{\eins_{\overline{I}_{il}}\circ\Lambda_{il}(\theta,\cdot)\mid\theta\in\overline{\Theta}\}$ is a VC-subgraph class with index $\leq m + 3$ (see \cite[Lemma 9.9, (viii)]{Kosorok2008}). 
Finally, $\FFF_{i,\overline{\Theta}}$ is a subset of all functions $\min_{l=1,\ldots,s_{i}}\eins_{B_{il}(\theta)}$ with $\theta\in\overline{\Theta}$ which implies that $\FFF_{i,\overline{\Theta}}$ is a VC-subgraph class with index
$$
V(\FFF_{i,\overline{\Theta}})\leq\sum_{l=1}^{s_{i}}V\big(\{\eins_{B_{il}(\theta)}\mid\theta\in\overline{\Theta}\}\big) - (s_{i} - 1) \leq (m + 2)~s_{i} + 1\quad i\in\{1,\ldots,r\}
$$
(see \cite[Lemma 9.9, (i)]{Kosorok2008}). This completes the proof.
\end{proof}
By (PH) each $G^{i}$ satisfies condition (H). Therefore, as an immediate consequence of the Proposition \ref{Hoelder-Bedingung auxiliary} we obtain the next auxiliary result.
\begin{lemma}
\label{starting VC-classes II}
Define for nonvoid $\overline{\Theta}\subseteq \Theta$ and $i\in\{1,\ldots,r\}$ the set 
$\overline{\FFF}_{i,\overline{\Theta}}$ of all $G^{i}(\theta,\cdot)$ with  $\theta\in\overline{\Theta}$. Furthermore let $\Delta(\overline{\Theta})$ denote the diameter of $\overline{\Theta}$ w.r.t. the Euclidean metric, and let $\beta_{i}\in ]0,1]$ and $C_{i}:\RRR^{d}\rightarrow\RRR$ be as in (PH). If $\Delta(\overline{\Theta}) > 0$, and if $G^{i}(\overline{\theta},\cdot)$ is square $\MP^{Z}$-integrable for some $\overline{\theta}\in\overline{\Theta}$, then 
$\overline{\xi}_{i} := \Delta(\overline{\Theta})^{\beta_{i}}~C_{i}(\cdot) + |G^{i}(\overline{\theta},\cdot)|$ is a square $\MP^{Z}$-integrable positive envelope of $\overline{\FFF}_{i,\overline{\Theta}}$, and
$$
\sup_{\MQ\in \cM_{\textrm{\tiny fin}}}N\big(\varepsilon \|\overline{\xi}_{i}\|_{\MQ,2},\overline{\FFF}_{i,\overline{\Theta}}, L^{2}(\MQ)\big)\leq 9^{m}~\varepsilon^{-m/\beta_{i}}\quad\mbox{for}~\varepsilon \in ]0,1[.
$$ 
\end{lemma}

Now, we are ready to finish the proof Proposition \ref{startingpoint}.
\medskip

We consider the function class $\FFF_{i}$ consisting of all mappings $f^{i}(\theta,\cdot)\cdot G^{i}(\theta,\cdot)$
with $\theta\in\Theta$ for $i\in\{1,\ldots,r\}$. The significance of these function classes for our purposes stems from representation (\ref{startingoberservation}). Note that $\hat{\xi}_{i} := \xi_{i}\cdot\overline{\xi}_{i}$ defines a positive envelope of $\FFF_{i}$ for $i\in\{1,\ldots,r\}$, where $\overline{\xi}_{i} := \Delta(\Theta)^{\beta_{i}}~C_{i}(\cdot) + |G^{i}(\overline{\theta},\cdot)|$ (see Lemma \ref{starting VC-classes II}). Our aim is to find explicit upper estimates of the 
covering number $N\big(\varepsilon \|\hat{\xi}_{i}\|_{\MQ,2},\FFF_{i},L^{2}(\MQ)\big)$ with 
$\MQ\in\cM_{\textrm{\tiny fin}}$.
\par
Fix $i\in\{1,\ldots,r\}$. First of all, $\FFF_{\textrm{\tiny PH}}^{i}$ is a VC-subgraph class with index $V(\FFF_{\textrm{\tiny PH}}^{i})\leq (m + 2)~s_{i} + 1$ by Lemma \ref{starting VC-classes I}. 
Then we may conclude from Corollary \ref{VC-subgraph covering numbers} 
\begin{align*}
&\nonumber
\sup_{\MQ\in\cM_{\textrm{\tiny fin}}}N\big(\varepsilon \|\xi_{i}\|_{\MQ,2},\FFF_{\textrm{\tiny PH}}^{i},L^{2}(\MQ)\big)
\leq 
e ([m + 2]~s_{i} + 1) \big(4 e^{1/2}/\varepsilon\big)^{2 (m + 2)~s_{i}}\quad\mbox{for}~\varepsilon\in ]0,1[.
\end{align*}
Moreover, we have 
\begin{align*}
&\nonumber
\sup_{\MQ\in\cM_{\textrm{\tiny fin}}}N\big(\varepsilon \|\hat{\xi}_{i}\|_{\MQ,2},\FFF_{i},L^{2}(\MQ)\big)\\ 
&
\leq 
\sup_{\MQ\in\cM_{\textrm{\tiny fin}}}N\big(\varepsilon \|\xi_{i}\|_{\MQ,2}/4,\FFF_{\textrm{\tiny PH}}^{i},L^{2}(\MQ)\big)\cdot \sup_{\MQ\in\cM_{\textrm{\tiny fin}}}N\big(\varepsilon \|\overline{\xi}_{i}\|_{\MQ,2}/4,\overline{\FFF}_{\textrm{\tiny PH}}^{i},L^{2}(\MQ)\big)
\end{align*}
for $\varepsilon\in ]0,1[$ 
(see Corollary A.1. in supplement to 
\cite{ChernozhukovEtAl2014} or proof of Theorem 9.15 in \cite{Kosorok2008}).
Hence in view of Lemma \ref{starting VC-classes II} we end up with. 
\begin{align}
&\nonumber
\sup_{\MQ\in\cM_{\textrm{\tiny fin}}}N\big(\varepsilon \|\hat{\xi}_{i}\|_{\MQ,2},\FFF_{i},L^{2}(\MQ)\big)
\\ 
&\label{Abschaetzung3}
\leq
9^{m}~16^{(m+2) s_{i}}~e^{1 + (m + 2)~s_{i}}~([m + 2]~s_{i} + 1)~\big(4/\varepsilon\big)^{2 (m + 2) s_{i} + m/\beta_{i}}
\end{align}
for $i\in\{1,\ldots,r\}, \varepsilon\in ]0,1[$. Next, fix $\MQ\in\cM_{\textrm{\tiny fin}}$, $\varepsilon > 0$. Let $h^{i},\overline{h}^{i}\in\FFF_{i}$ such that the inequality $\|h^{i}-\overline{h}^{i}\|_{\MQ,2}\leq \varepsilon \|\hat{\xi}_{i}\|_{\MQ,2}/r$ holds for $i =1,\ldots,r$. Then 
by inequality $\sqrt{\sum_{i=1}^{r}t_{i}}\geq\sum_{i=1}^{r}\sqrt{t_{i}}/r$ for $t_{1},\ldots,t_{r}\geq 0$
\begin{eqnarray*}
\|\sum_{i=1}^{r} h^{i} - \sum_{i=1}^{r}\overline{h}^{i}\|_{\MQ,2}
\leq 
\sum_{i=1}^{r}\|h^{i}-\overline{h}^{i}\|_{\MQ,2} 
\leq 
\frac{\varepsilon}{r}\sum_{i=1}^{r}\|\hat{\xi}_{i}\|_{\MQ,2}
\leq 
\varepsilon \|\sum_{i=1}^{r}\hat{\xi}_{i}\|_{\MQ,2}.
\end{eqnarray*}
Thus by construction of $\xi$ along with (\ref{startingoberservation})
\begin{equation}
\label{random entropies Summen}
N\big(\varepsilon \|\xi\|_{\MQ,2},\FFF^{\Theta},L^{2}(\MQ)\big)\leq 
\prod_{i=1}^{r}N\big(\varepsilon \|\hat{\xi}_{i}\|_{\MQ,2}/r,\FFF_{i},L^{2}(\MQ)\big)
\end{equation}
for $\MQ\in\cM_{\textrm{\tiny fin}}$ and $\varepsilon > 0$. 
\medskip

Combining (\ref{Abschaetzung3}) and (\ref{random entropies Summen}), we obtain for $\delta\in ]0,1]$ by change of variable formula
$$
J(\FFF^{\Theta},\xi,\delta) = \delta \int_{0}^{1}\hspace*{-0.2cm}\sup_{\MQ\in\cM_{\textrm{\tiny fin}}}\sqrt{\ln\Big(2 N\big(\delta \varepsilon \|\xi\|_{\MQ,2},\FFF^{\Theta},L^{2}(\MQ)\big)\Big)}~d\varepsilon
\leq \delta \int_{0}^{1}\sqrt{v \ln(K_{\delta}/\varepsilon)}~d\varepsilon,
$$
where
$$
v := 2 (m + 2)\sum_{i=1}^{r}s_{i} + m \sum_{i=1}^{r}1/\beta_{i}
$$
and 
$$
K_{\delta} := \frac{4~r~\big(2\cdot 9^{r\cdot m} 16^{(m + 2)~\sum_{i=1}^{r} s_{i}} e^{r + (m + 2)~\sum_{i=1}^{r} s_{i}} ~\prod_{i=1}^{r}([m + 2]~s_{i} + 1)\big)^{1/v}}{\delta}.
$$
Now, we may finish the proof of Proposition \ref{startingpoint} via 
\eqref{Integralabschaetzung} by routine calculations.
\hfill$\Box$

\subsection{Proofs of the results from Section \ref{upper semideviations}}
\label{Beweise upper semideviations}
As a first result we shall show Lemma \ref{Rueckfuehrung}.
\smallskip

\noindent 
\textbf{Proof of Lemma \ref{Rueckfuehrung}:}\\[0.1cm]
Let $n\in\NNN$ and $a\in ]0,1]$. By choice of the random variable $\xi$ we may observe 
\begin{align*}
\inf_{\theta\in\Theta}\cR_{\rho_{p,a}}\big(F_{\theta}\big)\geq -\EEE[\xi] > - \infty\quad\mbox{and}\quad
\inf_{\theta\in\Theta}\cR_{\rho_{p,a}}\big(\hat{F}_{n,\theta}\big)\geq - \frac{1}{n}\sum_{j=1}^{n}\xi(Z_{j}) > -\infty.
\end{align*}
Moreover, using Minkowski's inequality, by representations \eqref{optimization general III} and \eqref{SAA upper semideviations} we have for nonvoid $\overline{\Theta}\subseteq\Theta$
\begin{align*}
&
\big|\inf_{\theta\in\overline{\Theta}}\cR_{\rho_{p,a}}\big(\hat{F}_{n,\theta}\big)-\inf_{\theta\in\overline{\Theta}}\cR_{\rho_{p,a}}\big(F_{\theta}\big)\big|
\\
&\leq 
(1 + a) \sup_{\theta\in\overline{\Theta}}\big|\frac{1}{n}\sum_{j=1}^{n}G(\theta,Z_{j}) - \EEE[G(\theta,Z_{1})]\big|\\
&\quad 
+ 
a\sup_{\theta\in\overline{\Theta}}\Big|\Big(\frac{1}{n}\sum_{j=1}^{n}G_{p}(\theta,Z_{j})\Big)^{1/p} - \Big(\EEE[G_{p}(\theta,Z_{1})]\Big)^{1/p}\Big|.\\
\end{align*}
Since $|t^{1/p}- s^{1/p}|\leq |t-s|^{1/p}$ holds for $t,s\geq 0$, we end up with  
\begin{align*}
\big|\inf_{\theta\in\overline{\Theta}}\cR_{\rho_{p,a}}\big(\hat{F}_{n,\theta}\big)-\inf_{\theta\in\overline{\Theta}}\cR_{\rho_{p,a}}\big(F_{\theta}\big)\big|
&\leq 
(1 + a) \sup_{\theta\in\overline{\Theta}}\big|\frac{1}{n}\sum_{j=1}^{n}G(\theta,Z_{j}) - \EEE[G(\theta,Z_{1})]\big|\\
&\qquad 
+ a\sup_{\theta\in\overline{\Theta}}\big|\frac{1}{n}\sum_{j=1}^{n}G_{p}(\theta,Z_{j}) - \EEE[G_{p}(\theta,Z_{1})]\big|^{1/p}.
\end{align*}
Now, the proof may be finished easily.
\hfill$\Box$
\medskip

\noindent
\textbf{Proof of Lemma \ref{ReduktionCountable}:}\\[0.1cm]
Let $\overline{\Theta}\subseteq\Theta$ from (A 3'). For $\theta\in\Theta$ we may select by (A 3') a sequence $(\vartheta_{k})_{k\in\NNN}$ in $\overline{\Theta}$ such 
that $\EEE[|G(\vartheta_{k},Z_{1}) - G(\theta,Z_{1})|]\to 0$, and thus $G(\vartheta_{k},Z_{1})\to G(\theta,Z_{1})$ in probability by application of Markov's inequality. This implies $G_{p}(\vartheta_{k},Z_{1})\to G_{p}(\theta,Z_{1})$ in probability. Furthermore we have upper estimation $|G_{p}(\vartheta_{k},Z_{1})|\leq \big(\xi(Z_{1}) + \EEE[\xi(Z_{1})]\big)^{p}$ for $k\in\NNN$, and $\xi$ is integrable of order $p$ by assumption. Thus the application of Vitalis' theorem (see \cite[Proposition 21.4]{Bauer2001}) yields $\EEE[G_{p}(\vartheta_{k},Z_{1})]\to \EEE[G_{p}(\theta,Z_{1})]$. Thus we have shown for any $\theta\in\Theta$
\begin{equation}
\label{Doppelkonvergenz}
\inf_{\vartheta\in\overline{\Theta}}\Big\{\big|\EEE[G(\vartheta,Z_{1})] - \EEE[G(\theta,Z_{1})]\big| + \big|\EEE[G_{p}(\vartheta,Z_{1})] - \EEE[G_{p}(\theta,Z_{1})]\big|\Big\} = 0.
\end{equation}
In view of representation \eqref{optimization general III}, statement 1) follows immediately from \eqref{Doppelkonvergenz}.
\medskip

Next, fix $n\in\NNN$, choose the $\big(\MP^{Z}\big)^{n}$-null set $N_{n}$ according to (A 3'), and consider any vector $(z_{1},\ldots,z_{n})\in\RRR^{dn}\setminus N_{n}$. For $\theta\in\Theta$ we may find via (A 3') some sequence $(\vartheta_{k})_{k\in\Theta}$ in $\overline{\Theta}$ such that 
$\EEE[G(\vartheta_{k},Z_{1})]\to \EEE[G(\theta,Z_{1})]$ and $G(\vartheta_{k},z_{j})\to G(\theta,z_{j})$ for $j\in\{1,\ldots,n\}$. Then $G_{p}(\vartheta_{k},z_{j})\to G_{p}(\theta,z_{j})$ for every $j\in\{1,\ldots,n\}$. In particular statement 2) may be concluded from \eqref{Doppelkonvergenz} along with (A 3').
\medskip

Let us define the set $A_{n} := \{(Z_{1},\ldots,Z_{n})\in \RRR^{d n}\setminus N_{n}\}\in\cF$. Note $\MP(A_{n}) = 1$. Fix $\omega\in\Omega$. By (A 3') there exists for any $\theta\in\Theta$ some sequence $(\vartheta_{k})_{k\in\Theta}$ in $\overline{\Theta}$ satisfying $G\big(\vartheta_{k},Z_{j}(\omega)\big)\to G\big(\theta,Z_{j}(\omega)\big)$ for $j\in\{1,\ldots,n\}$. Then, drawing on representation \eqref{SAA upper semideviations}, the convergence $\cR_{\rho_{p,a}}(\hat{F}_{n,\vartheta_{k}})(\omega)\to \cR_{\rho_{p,a}}(\hat{F}_{n,\theta})(\omega)$ may be verified easily for every $a\in ]0,1]$. This shows statement 3), recalling $\MP(A_{n}) = 1$. The proof is complete.

\hfill$\Box$
\medskip

\noindent
\textbf{Proof of Lemma \ref{auxiliary I}:}\\[0.1cm]
First of all
$$
G(\theta,z)  - \EEE[G(\theta,Z_{1})]\leq \xi + \big(\EEE[\xi(Z_{1})]\vee 1\big)
$$
holds for $\theta\in\Theta$ and $z\in\RRR^{d}$ so that $\xi_{p}$ is a positive envelope of $\FFF^{\Theta,p}$.
\par
Next, let $s,t,u\in [0,\infty[$ with $u\geq t\vee s$. The mapping $f: ]1,\infty[\rightarrow\RRR$, defined by $f(q) = |s^{q}- t^{q}|$ is nondecreasing. Hence 
$|s^{p} - t^{p}|\leq |s^{\lceil p\rceil} - t^{\lceil p\rceil}|$, using notation $\lceil p\rceil := \min [p,\infty[\cap\NNN$.  
\par
Moreover, $|s^{k+1}- t^{k+1}| = (s\vee t) |s^{k} - t^{k}| + |s - t| (s\wedge t)^{k}$ holds for $k\in\NNN$. Then it may be shown by induction that $|s^{k} - t^{k}|\leq |s - t| (2 u)^{k-1}$ is valid for every $k\in\NNN$. In particular, we end up with the inequality 
$|s^{p} - t^{p}|\leq |s - t|(2u)^{\lceil p\rceil - 1}$. As a further consequence we may observe for $\theta, \vartheta\in\Theta$ and $z\in\RRR^{d}$
\begin{align*}
&|G_{p}(\theta,z) - G_{p}(\vartheta,z)|^{2}\\
&\leq 
\big|\big(G(\theta,z) - \EEE[G(\theta,Z_{1})]\big)^{+} - \big(G(\vartheta,z) - \EEE[G(\vartheta,Z_{1})]\big)^{+}\big|^{2}\big(2\xi(z) + 2 \EEE[\xi(Z_{1})]\big)^{2\lceil p\rceil - 2}\\
&\leq 
2^{2(p + 1)}\xi_{p}(z)^{2 p/(p+1)}\Big(|G(\theta,z) - G(\vartheta,z) \big|^{2} + 
|\EEE[G(\theta,Z_{1})] - \EEE[G(\vartheta,Z_{1})]|^{2}\Big).
\end{align*}
The positive envelope $\xi_{p}$ of $\FFF^{\Theta,p}$ is square $\MP^{Z}$-square integrable by assumption, and the constant $\EEE[\xi(Z_{1})]$ may be viewed as an positive envelope of the class $I$ which gathers all constant functions $\EEE[G(\theta,Z_{1})]$ ($\theta\in\Theta)$. We may apply Theorem 2.10.20 from \cite{vanderVaartWellner1996} which leads to 
\begin{align*}
&
\int_{0}^{\delta}\sup_{\MQ\in \cM_{\textrm{\tiny fin}}}\sqrt{\ln\big(N\big(\varepsilon~2^{p+1}\|\xi_{p}^{p/(p+1)}\sqrt{\xi^{2} + \EEE[\xi(Z_{1})]^{2}}\|_{\MQ,2},\FFF^{\Theta,p},L^{2}(\MQ)\big)\big)}~d\varepsilon\\
&\leq 
\int_{0}^{\delta}\sup_{\MQ\in \cM_{\textrm{\tiny fin}}}\sqrt{\ln\big(N\big(\varepsilon~\|\xi\|_{\MQ,2}/2,\FFF^{\Theta},L^{2}(\MQ)\big)\big)}~d\varepsilon 
+ 
\int_{0}^{\delta}\sqrt{\ln\big(N\big(\varepsilon~\EEE[\xi(Z_{1})]/2,I\big)}~d\varepsilon\\
&\leq
2~ J(\FFF^{\Theta},\xi,\delta/2)
+ 
\int_{0}^{\delta}\sqrt{\ln\big(N\big(\varepsilon~\EEE[\xi(Z_{1})]/4,\big[-\EEE[\xi(Z_{1})],\EEE[\xi(Z_{1})]\big]\big)}~d\varepsilon
\end{align*}
for $\delta > 0$, where for $J\in\big\{I, \big[-\EEE[\xi(Z_{1})],\EEE[\xi(Z_{1})]\big]\big\}$ and $\eta > 0$ we denote by the symbol $N\big(\eta~\EEE[\xi(Z_{1})] ,J\big)$ the minimal number to cover $J$ by closed intervals of the 
form $[x_{i} -\eta~\EEE[\xi(Z_{1})], x_{i} + \eta~\EEE[\xi(Z_{1})]]$ with $x_{i}\in J$. It is easy to check that the inequality $N\big(\varepsilon~\EEE[\xi(Z_{1})]/4,\big[-\EEE[\xi(Z_{1})],\EEE[\xi(Z_{1})]\big]\big)\leq 8/\varepsilon$ holds for $\varepsilon > 0$. Hence we may invoke the change of variable formula along with \eqref{Integralabschaetzung} which yields
\begin{align*}
&
\int_{0}^{\delta}\sup_{\MQ\in \cM_{\textrm{\tiny fin}}}\sqrt{\ln\big(N\big(\varepsilon~2^{p+1}\|\xi_{p}^{p/(p + 1)}\sqrt{\xi^{2} + \EEE[\xi(Z_{1})]^{2}}\|_{\MQ,2},\FFF^{\Theta,p},L^{2}(\MQ)\big)\big)}~d\varepsilon\\
&\leq 
2 ~J(\FFF^{\Theta},\xi,\delta/2) + \int_{0}^{\delta}\sqrt{\ln(8/\varepsilon)}~d\varepsilon
= 
2~ J(\FFF^{\Theta},\xi,\delta/2) + \delta \int_{0}^{1}\sqrt{\ln\big((8/\delta)/\varepsilon\big))}~d\varepsilon\\ 
&\leq 
2~J(\FFF^{\Theta},\xi,\delta/2) + 2 \delta \sqrt{\ln(8/\delta)}\quad\mbox{for}~\delta\in ]0,1[.
\end{align*} 
Since $\|\xi_{p}^{p/(p + 1)}\sqrt{\xi^{2} + \EEE[\xi(Z_{1})]^{2}}\|_{\MQ,2}\leq\|\xi_{p}\|_{\MQ,2}$ is valid for any $\MQ\in\cM_{\textrm{\tiny fin}}$, we may further conclude, using change of variable formula again,
\begin{align*}
&
\int_{0}^{\delta}\sup_{\MQ\in \cM_{\textrm{\tiny fin}}}\sqrt{\ln\big(N\big(\varepsilon~\|\xi_{p}\|_{\MQ,2},\FFF^{\Theta,p},L^{2}(\MQ)\big)\big)}~d\varepsilon\\
&\leq 
\int_{0}^{\delta}\sup_{\MQ\in \cM_{\textrm{\tiny fin}}}\sqrt{\ln\big(N\big(\varepsilon~\|\xi_{p}^{p/(p + 1)}\sqrt{\xi^{2} + \EEE[\xi(Z_{1})]^{2}}\|_{\MQ,2},\FFF^{\Theta,p},L^{2}(\MQ)\big)\big)}~d\varepsilon\\
&\leq 
2^{p+1}\int_{0}^{\delta/2^{p+1}}\hspace*{-0.5cm}\sup_{\MQ\in \cM_{\textrm{\tiny fin}}}\sqrt{\ln\big(N\big(\eta~2^{p+1}\|\xi_{p}^{p/(p + 1)}\sqrt{\xi^{2} + \EEE[\xi(Z_{1})]^{2}}\|_{\MQ,2},\FFF^{\Theta,p},L^{2}(\MQ)\big)\big)}~d\eta\\
&\leq 
2^{p+2} J(\FFF^{\Theta},\xi,\delta/2^{p+2}) + 2~ \delta~ \sqrt{\ln\big(2^{p+4}/\delta\big)}\quad\mbox{for}~\delta\in ]0, 1[. 
\end{align*}
Now, the statement of Lemma \ref{auxiliary I} follows easily from the observation 
$$
J(\FFF^{\Theta,p},\xi_{p},\delta)\leq \sqrt{2\ln(2)}~\delta + \sqrt{2}\int_{0}^{\delta}\sup_{\MQ\in \cM_{\textrm{\tiny fin}}}\sqrt{\ln\big(N\big(\varepsilon~\|\xi_{p}\|_{\MQ,2},\FFF^{\Theta,p},L^{2}(\MQ)\big)\big)}~d\varepsilon
$$
for $\delta > 0$
\hfill$\Box$
\subsection{Proof of results from Section \ref{generalized divergence risk measures}}
\label{Beweis compactification}
Let us introduce the sequence 
$\big(X_{n}\big)_{n\in\NNN}$ of random processes  
$$
X_{n}:\Omega\times\Theta\times\RRR\rightarrow\RRR,~X_{n}(\omega,\theta,x) := \frac{1}{n}~\sum_{j=1}^{n}\Big(\Phi^{*}\big(G(\theta,Z_{j}(\omega)) + x\big) - x\Big)\quad(n\in\NNN),
$$
and, under (A 2'), the mapping
$$
\psi_{\Phi}:\theta\times\RRR,~(\theta,x)\mapsto \EEE\Big[\Phi^{*}\big(G(\theta,Z_{1}) + x\big) - x\Big].
$$
The key for proving Proposition \ref{compactification} is the following 
observation.
\begin{lemma}
\label{key compactification}
Let (A 1) and (A 2') be fulfilled. Furthermore let $x_{0} > 1$ be from the effective domain of $\Phi$. Then with $\xi$ from (A 2') the following inequalities hold for $\theta\in\Theta, x\in\RRR$ and $n\in\NNN$.
\begin{align*}
& 
X_{n}(\cdot,\theta,x)
\geq
\max\Big\{-\Phi(0) - x, -\frac{x_{0}}{n}\sum_{j=1}^{n}\xi(Z_{j}) - \Phi(x_{0}) + x (x_{0} - 1)\Big\}\\
&
\psi_{\Phi}(\theta,x)
\geq
\max\left\{-\Phi(0) - x, -x_{0} \EEE[\xi(Z_{1})] - \Phi(x_{0}) + x (x_{0} - 1) \right\}.
\end{align*}
In particular, $\psi_{\Phi}$ is bounded from below and also the path $X_{n}(\omega,\cdot,\cdot)$ for every $n\in\NNN$ and any $\omega\in\Omega$.
\end{lemma}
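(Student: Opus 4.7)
My approach is to exploit the definition of the Fenchel-Legendre transform to produce two affine lower bounds for $\Phi^{*}$. Specifically, by the very definition $\Phi^{*}(y)=\sup_{x\geq 0}(xy-\Phi(x))$, any $x$ in the effective domain of $\Phi$ gives the pointwise estimate $\Phi^{*}(y)\geq xy-\Phi(x)$ for all $y\in\RRR$. The two values I would plug in are $x=0$ (legitimate because $\Phi(0)<\infty$ by hypothesis) and $x=x_{0}$, which is admissible since $x_{0}$ lies in the effective domain by the choice fixed in the statement. This yields the uniform lower bounds
\begin{align*}
\Phi^{*}(y)\geq -\Phi(0)\qquad\text{and}\qquad \Phi^{*}(y)\geq x_{0}y-\Phi(x_{0})\quad\text{for every }y\in\RRR.
\end{align*}

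Next I would substitute $y=G(\theta,Z_{j}(\omega))+x$ in the definition of $X_{n}(\omega,\theta,x)$. The first inequality immediately gives $X_{n}(\omega,\theta,x)\geq -\Phi(0)-x$ after averaging. The second yields
\begin{equation*}
X_{n}(\omega,\theta,x)\geq \frac{x_{0}}{n}\sum_{j=1}^{n}G(\theta,Z_{j}(\omega))+x(x_{0}-1)-\Phi(x_{0}).
\end{equation*}
At this step I would invoke (A 2'), which in particular ensures $G(\theta,\cdot)\geq -\xi$ pointwise, to replace the sum $\tfrac{x_{0}}{n}\sum_{j}G(\theta,Z_{j}(\omega))$ by its lower bound $-\tfrac{x_{0}}{n}\sum_{j}\xi(Z_{j}(\omega))$. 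Taking the maximum of the two resulting inequalities produces exactly the first display in the lemma. The corresponding estimates for $\psi_{\Phi}$ are obtained in the same way by taking expectations: the only additional observation is that $\xi(Z_{1})\in H^{\Phi^{*}}$ implies $\MP$-integrability of $\xi(Z_{1})$ (by Jensen's inequality as noted in the excerpt), so $\EEE[\xi(Z_{1})]$ is finite and $\EEE[G(\theta,Z_{1})]\geq -\EEE[\xi(Z_{1})]$ is legitimate.

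Finally, to establish boundedness from below, I would observe that in both displays the two lower bounds are affine in $x$ with slopes $-1$ and $x_{0}-1>0$ of opposite sign, and that they do not depend on $\theta$. Consequently the max of the two bounds is a convex, V-shaped function of $x$ alone whose infimum (attained at the crossover point of the two affine pieces) is finite. For $\psi_{\Phi}$ this infimum is a universal constant, bounding it from below on all of $\Theta\times\RRR$; for $X_{n}(\omega,\cdot,\cdot)$ the infimum is a constant depending only on $n$ and $\omega$ (through $\tfrac{x_{0}}{n}\sum_{j}\xi(Z_{j}(\omega))$, which is finite for every $\omega$), again giving a lower bound uniform in $(\theta,x)$. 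I do not anticipate any substantive obstacle: the argument is essentially bookkeeping around the conjugate inequality, and the only conceptual point is the choice of the two probe values $0$ and $x_{0}$, both forced by the standing hypotheses on $\Phi$.
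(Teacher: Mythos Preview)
Your proposal is correct and follows essentially the same route as the paper: both use the conjugate inequality $\Phi^{*}(y)\geq xy-\Phi(x)$ at the two probe points $x=0$ and $x=x_{0}$, then invoke (A~2') to bound $G(\theta,\cdot)\geq -\xi$. Your argument for boundedness from below via the V-shaped maximum of two affine functions with opposite-sign slopes is exactly the paper's observation that the lower bound is continuous in $x$ and tends to $\infty$ as $x\to\pm\infty$.
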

\begin{proof}
The inequalities $\Phi^{*}(y) \geq -\Phi(0)$ and $\Phi^{*}(y)\geq y x_{0} - \Phi(x_{0})$ hold for $y\in\RRR$ by definition of $\Phi^{*}$. Then, the inequalities in the statement follow easily. Next, notice that $\varphi(x) := \max\left\{-\Phi(0) - x, -x_{0} \EEE[\xi(Z_{1})] - \Phi(x_{0}) + x (x_{0} - 1) \right\}$ defines a continuous mapping 
$\varphi:\RRR\rightarrow\RRR$ which tends to $\infty$ for $x\to -\infty$ and $x\to\infty$.
Hence $\varphi$ is bounded from below, and thus also $\psi_{\Phi}$. In the same way it may be shown that $X_{n}(\omega,\cdot,\cdot)$ is bounded from below for $n\in\NNN$ and $\omega\in\Omega$. This completes the proof.
\end{proof}
In the next step we want to show that with high probability we 
may restrict simultaneously the minimizations of $\psi_{\Phi}$ and the processes $X_{n}$ to a compact subset of $\Theta\times\RRR$. 
More precisely, let us introduce the sets
\begin{align*}
& 
S(\psi_{\Phi}) := \Big\{(\theta,x)\in\Theta\times\RRR\mid \psi_{\Phi}(\theta,x) = \inf_{\theta\in\Theta\atop x\in\RRR}\psi_{\Phi}(\theta,x)\Big\},\\
&
S_{n}(\omega) := \Big\{(\theta,x)\in\Theta\times\RRR\mid X_{n}(\omega,\theta,x) = \inf_{\theta\in\Theta\atop x\in\RRR}X_{n}(\omega,\theta,x)\Big\}\quad(n\in\NNN,~\omega\in\Omega).
\end{align*}
\begin{theorem}
\label{stochastic equicontinuity}
Let (A 1), (A 2') be fulfilled. If $G(\cdot,z)$ is lower semicontinuous for $z\in\RRR^{d}$, then $S_{n}(\omega)$ is nonvoid for $n\in\NNN$ and $\omega\in\Omega$. Moreover, 
$$
S_{n}(\omega)\subseteq \Theta\times \big[x_{l}(x_{0},\xi,\delta),x_{u}(x_{0},\xi,\delta)\big]\quad\mbox{for}~n\in\NNN,~\delta > 0, \omega\in A_{n,\delta}^{\xi},
$$
where $x_{l}(x_{0},\xi,\delta),x_{u}(x_{0},\xi,\delta)$ are defined by \eqref{Eingrenzung1} and \eqref{Eingrenzung2} respectively, and $A_{n,\delta}^{\xi}\in\cF$ is as in the display of Proposition \ref{compactification}.
\end{theorem}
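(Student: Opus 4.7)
The plan is to establish the two assertions in order, treating existence of minimizers first and then deriving the explicit $x$-coordinate bounds on $A_{n,\delta}^{\xi}$.

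For the existence claim, I would first verify that the path $X_{n}(\omega,\cdot,\cdot):\Theta\times\RRR\to\RRR$ is lower semicontinuous for every fixed $\omega$ and $n$. The argument is a routine composition: the map $(\theta,x)\mapsto G(\theta,z)+x$ is jointly lsc (lsc in $\theta$ by assumption, continuous in $x$), and $\Phi^{*}$ is finite, continuous and nondecreasing on $\RRR$, so $(\theta,x)\mapsto \Phi^{*}(G(\theta,z)+x)$ is lsc; summing over $j$ and subtracting the continuous function $x$ preserves lower semicontinuity. Next I would use the two inequalities of Lemma \ref{key compactification} as a coercivity statement: the first forces $X_{n}(\omega,\theta,x)\to\infty$ as $x\to-\infty$ uniformly in $\theta$, and the second, using $x_{0}>1$, forces $X_{n}(\omega,\theta,x)\to\infty$ as $x\to\infty$ uniformly in $\theta$. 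Hence there exists $R=R(\omega,n)>0$ with
\[
\inf_{(\theta,x)\in\Theta\times\RRR}X_{n}(\omega,\theta,x)=\inf_{(\theta,x)\in\Theta\times[-R,R]}X_{n}(\omega,\theta,x),
\]
and the right-hand infimum is attained because $\Theta\times[-R,R]$ is compact and $X_{n}(\omega,\cdot,\cdot)$ is lsc. Thus $S_{n}(\omega)\neq\emptyset$.

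For the localization assertion, fix $\omega\in A_{n,\delta}^{\xi}$ and an arbitrary $(\theta^{*},x^{*})\in S_{n}(\omega)$. To bound the optimal value from above I would evaluate $X_{n}(\omega,\cdot,\cdot)$ at $(\theta^{*},0)$ and use (A 2') together with monotonicity of $\Phi^{*}$, obtaining
\[
X_{n}(\omega,\theta^{*},x^{*})\leq X_{n}(\omega,\theta^{*},0)=\frac{1}{n}\sum_{j=1}^{n}\Phi^{*}\bigl(G(\theta^{*},Z_{j}(\omega))\bigr)\leq \frac{1}{n}\sum_{j=1}^{n}\Phi^{*}\bigl(\xi(Z_{j}(\omega))\bigr)\leq\EEE[\Phi^{*}(\xi(Z_{1}))]+\delta,
\]
where the final inequality is one of the two defining inequalities of $A_{n,\delta}^{\xi}$. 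Feeding this upper bound into the first inequality of Lemma \ref{key compactification}, namely $X_{n}(\omega,\theta^{*},x^{*})\geq -\Phi(0)-x^{*}$, yields
\[
x^{*}\geq -\Phi(0)-\EEE[\Phi^{*}(\xi(Z_{1}))]-\delta=x_{l}(x_{0},\xi,\delta).
\]

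For the upper bound on $x^{*}$, I would combine the second inequality of Lemma \ref{key compactification} applied at $(\theta^{*},x^{*})$ with the other defining inequality of $A_{n,\delta}^{\xi}$, namely $\tfrac{1}{n}\sum_{j=1}^{n}\xi(Z_{j}(\omega))\leq\EEE[\xi(Z_{1})]+\delta$. Together with $x_{0}-1>0$ and the upper bound on $X_{n}(\omega,\theta^{*},x^{*})$ derived above, this rearranges to
\[
x^{*}\leq\frac{\Phi(x_{0})+(1+x_{0})\delta+\EEE[\Phi^{*}(\xi(Z_{1}))]+x_{0}\EEE[\xi(Z_{1})]}{x_{0}-1}\leq x_{u}(x_{0},\xi,\delta),
\]
where the last step uses $\Phi(0)\geq 0$, which follows from $\inf_{x\geq 0}\Phi(x)=0$. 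The main technical point is the joint lower semicontinuity of $X_{n}(\omega,\cdot,\cdot)$ combined with the uniform-in-$\theta$ coercivity; once these are in place, everything else is a mechanical rearrangement of the two estimates of Lemma \ref{key compactification} using the two defining inequalities of $A_{n,\delta}^{\xi}$.
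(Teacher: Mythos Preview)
Your proof is correct and follows essentially the same approach as the paper: both use the evaluation at $x=0$ together with monotonicity of $\Phi^{*}$ and (A 2') to bound the optimal value from above, and both feed this into the two lower bounds of Lemma~\ref{key compactification} to localize $x^{*}$. The only organizational difference is that the paper first derives sample-dependent bounds $[a_{n}(\omega),b_{n}(\omega)]$ valid for every $\omega$ and then specializes to $A_{n,\delta}^{\xi}$, whereas you work directly on $A_{n,\delta}^{\xi}$; the existence argument via lower semicontinuity and coercivity is identical.
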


\begin{proof}
Since $\Phi^{*}$ is nondecreasing, we may observe by (A 2')
\begin{eqnarray*}
\label{2Stern}
\sup_{\theta\in\Theta}\inf_{x\in\RRR}X_{n}(\cdot,\theta,x)\leq \sup_{\theta\in\Theta}\frac{1}{n}~\sum_{j=1}^{n}\Phi^{*}\big(G(\theta,Z_{j})\big)\leq \frac{1}{n}~\sum_{j=1}^{n}\Phi^{*}\big(\xi(Z_{j})\big).
\end{eqnarray*}
Then in view of Lemma \ref{key compactification}, we obtain for every $\omega\in\Omega$
$$
\inf_{\theta\in\Theta}X_{n}(\omega,\theta,x)
> \sup_{\theta\in\Theta}\inf_{x\in\RRR}X_{n}(\cdot,\theta,x)\quad\mbox{for}~
x \in\RRR\setminus [a_{n}(\omega),b_{n}(\omega)],
$$
where 
$a_{n}(\omega) := -\Phi(0) - \frac{1}{n}~\sum_{j=1}^{n}\Phi^{*}\big(\xi\big(Z_{j}(\omega)\big)\big)$,
and
$$
b_{n}(\omega) := {\Phi(x_{0}) + \frac{1}{n}\sum_{j=1}^{n}\Phi^{*}\Big(\xi\big(Z_{j}(\omega)\big)\Big) + \frac{x_{0}}{n}\sum_{j=1}^{n}\xi\big(Z_{j}(\omega)\big)\over x_{0} - 1}.
$$
This means 
\begin{equation}
\label{null}
\inf_{\theta\in\Theta\atop x\in\RRR}X_{n}(\omega,\theta,x) = \hspace*{-0.5cm}\inf_{\theta\in\Theta\atop x\in [a_{n}(\omega),b_{n}(\omega)]}\hspace*{-0.5cm}X_{n}(\omega,\theta,x)\quad\mbox{and}\quad
S_{n}(\omega) \subseteq \Theta\times [a_{n}(\omega),b_{n}(\omega)]
\end{equation}
for $n\in\NNN, \omega\in\Omega$. Since $G(\cdot,z)$ is lower semicontinuous for $z\in\RRR^{d}$, and since $\Phi^{*}$ is nondecreasing as well as continuous, the mapping $X_{n}(\omega,\cdot,\cdot)$ is lower semicontinuous on the compact set $\Theta\times[a_{n}(\omega),b_{n}(\omega)]$ for $\omega\in\Omega$. As a consequence $S_{n}(\omega)$ is nonvoid for $n\in\NNN$ and $\omega\in\Omega$. We may also conclude from (\ref{null}) that 
$S_{n}(\omega)$ is contained in the set $\Theta\times \big[x_{l}(x_{0},\xi,\delta), x_{u}(x_{0},\xi,\delta)\big]$ if $\omega\in A_{n,\delta}^{\xi}$. The proof is complete.
\end{proof}
We 
may also derive compactness of the set $S(\psi_{\Phi})$ of minimizers of $\psi_{\Phi}$.
\begin{lemma}
\label{basic observations}
Let (A 1), (A 2') be fulfilled, and let $G(\cdot,z)$ be lower semicontinuous for $z\in\RRR^{d}$. Then the mapping $\psi_{\Phi}$ is lower semicontinuous, and the set $S(\psi_{\Phi})$ is nonvoid and compact, satisfying 
$$
S(\psi_{\Phi})\subseteq \Theta\times \big[x_{l}(x_{0},\xi,\delta), x_{u}(x_{0},\xi,\delta)\big]\quad\mbox{for}~\delta > 0.
$$
\end{lemma}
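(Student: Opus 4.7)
The plan is to argue in three steps. First, I show that $\psi_\Phi$ is lower semicontinuous by combining Fatou's lemma with the natural Fenchel lower bound on $\Phi^*$. Second, I use the two pointwise lower bounds from Lemma \ref{key compactification} to exclude any $(\theta,x)$ with $x\notin[x_l(x_0,\xi,\delta),x_u(x_0,\xi,\delta)]$ from $S(\psi_\Phi)$, mirroring the argument used for $S_n(\omega)$ in Theorem \ref{stochastic equicontinuity}. Third, I apply Weierstrass on the resulting compact localization together with the fact that level sets of lsc functions are closed.

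For step one, let $(\theta_n,x_n)\to(\theta,x)$ in $\Theta\times\RRR$. Lower semicontinuity of $G(\cdot,z)$ for each $z\in\RRR^d$ gives $G(\theta,z)+x\leq \liminf_n\bigl(G(\theta_n,z)+x_n\bigr)$. Since $\Phi^*$ is continuous and nondecreasing, applying the eventual inequality $G(\theta_n,z)+x_n\geq \liminf_n\bigl(G(\theta_n,z)+x_n\bigr)-\varepsilon$ and letting $\varepsilon\downarrow 0$ propagates this to $\Phi^*(G(\theta,z)+x)\leq \liminf_n \Phi^*(G(\theta_n,z)+x_n)$. From the definition of $\Phi^*$ with $x=0$ one reads off $\Phi^*(y)\geq -\Phi(0)$ for all $y\in\RRR$, providing an integrable lower bound. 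Fatou's lemma therefore yields $\EEE[\Phi^*(G(\theta,Z_1)+x)]\leq \liminf_n\EEE[\Phi^*(G(\theta_n,Z_1)+x_n)]$, and subtracting the convergent sequence $x_n$ gives $\psi_\Phi(\theta,x)\leq \liminf_n \psi_\Phi(\theta_n,x_n)$.

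For step two, fix $\delta>0$ and note that by (A 2') together with monotonicity of $\Phi^*$ one has, for any $\theta\in\Theta$,
$$
\inf_{(\theta,x)\in\Theta\times\RRR}\psi_\Phi(\theta,x)\leq \psi_\Phi(\theta,0)\leq \EEE\bigl[\Phi^*(\xi(Z_1))\bigr].
$$
If $x<x_l(x_0,\xi,\delta)$, the first inequality in Lemma \ref{key compactification} gives $\psi_\Phi(\theta,x)\geq -\Phi(0)-x>\delta+\EEE[\Phi^*(\xi(Z_1))]$, which strictly exceeds the infimum, so $(\theta,x)\notin S(\psi_\Phi)$. If $x>x_u(x_0,\xi,\delta)$, the second inequality combined with the explicit form of $x_u$ and with $\Phi(0)\geq 0$, $x_0-1>0$ yields $\psi_\Phi(\theta,x)>(1+x_0)\delta+\EEE[\Phi^*(\xi(Z_1))]$, again excluding the pair. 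Hence $S(\psi_\Phi)\subseteq \Theta\times[x_l(x_0,\xi,\delta),x_u(x_0,\xi,\delta)]$. On this compact product the lsc function $\psi_\Phi$ attains its infimum by Weierstrass, so $S(\psi_\Phi)$ is nonempty; as $S(\psi_\Phi)=\{\psi_\Phi\leq \inf\psi_\Phi\}$ it is closed by lsc, and a closed subset of a compact set is compact. The main obstacle is really the bookkeeping in step two: one must verify that the extra $(1+x_0)\delta$ and $+\Phi(0)$ terms appearing in \eqref{Eingrenzung2} only provide slack in the exclusion argument, which they do because $x_0>1$ and $\Phi(0)\geq 0$.
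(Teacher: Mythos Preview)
Your proposal is correct and follows essentially the same approach as the paper: the Fatou argument with the lower bound $\Phi^{*}\geq -\Phi(0)$ for lower semicontinuity, the localization via the two inequalities of Lemma~\ref{key compactification} combined with the upper estimate $\inf\psi_{\Phi}\leq\EEE[\Phi^{*}(\xi(Z_{1}))]$, and then Weierstrass on the resulting compact box. The only cosmetic difference is the order of the steps (the paper does the localization before the lower semicontinuity) and that the paper bounds the full integrand by $-\Phi(0)-\eta$ on $|x|\leq\eta$ rather than splitting off the $-x_{n}$ term as you do.
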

\begin{proof}
First of all by (A 2') along with monotonicity of $\Phi^{*}$ we may observe
$$
\sup_{\theta}\inf_{x\in\RRR}\psi_{\Phi}(\theta,x)\leq\sup_{\theta\in\Theta}\EEE\big[\Phi^{*}\big(G(\theta,Z_{1})\big)\big]\leq\EEE\big[\Phi^{*}\big(\xi(Z_{1})\big)\big].
$$
Then in view of Lemma \ref{key compactification} we may conclude that $\psi_{\Phi}(\theta,x) > \inf \psi_{\Phi}$ if 
$$
x < -\Phi(0) - \EEE\big[\Phi^{*}\big(\xi(Z_{1})\big)\big]\quad\mbox{or}\quad x >  {\EEE\big[\Phi^{*}\big(\xi(Z_{1})\big] + x_{0} \EEE\big[\xi(Z_{1})\big] + \Phi(x_{0}) \over x_{0} - 1}.
$$
Hence $\psi_{\Phi}$ and its restriction to $\big[x_{l}(x_{0},\xi,\delta), x_{u}(x_{0},\xi,\delta)\big]$ have the same infimal value, and $S(\psi_{\Phi}) \subseteq \Theta\times \big[x_{l}(x_{0},\xi,\delta), x_{u}(x_{0},\xi,\delta)\big]$ for $\delta > 0$.
\medskip

Lower semicontinuity of $G$ in $\theta$ implies that $(\theta,x)\mapsto\Phi^{*}\big(G(\theta,Z_{1}(\omega)) + x\big) + x$ is a lower semicontinuous mapping on $\Theta\times\RRR$ for any $\omega\in\Omega$ because $\Phi^{*}$ is nondecreasing and continuous. In addition by definition of $\Phi^{*}$ we obtain for any $\eta > 0$ and $\omega\in\Omega$
$$
\inf_{\theta\in\Theta\atop |x|\leq\eta}\Big(\Phi^{*}\big(G(\theta,Z_{1}(\omega)) + x\big) - x\Big)
\geq 
\inf_{|x|\leq\eta}\big(-\Phi(0) - x\big)\geq -\Phi(0) - \eta.
$$
Then an easy excercise of Fatou's Lemma shows that $\psi_{\Phi}$ is lower semicontinuous. Hence by compactness of $\Theta\times \big[x_{l}(x_{0},\xi,1), x_{u}(x_{0},\xi_{1},\xi,1)\big]$ the set $S(\psi_{\Phi})$ is a nonvoid compact subset of $\RRR^{m + 1}$. This completes the proof.
\end{proof}
Now we are ready to show Proposition \ref{compactification}.
\medskip

\noindent
\textbf{Proof of Proposition \ref{compactification}:}\\
Recall the representation of the genuine optimization problem \eqref{optimization general II} via Theorem \ref{optimized certainty equivalent}, and the representation of the problem associated with the SAA by \eqref{optimization approximativ}. Then the entire statement of Proposition \ref{compactification} may be derived easily from Theorem \ref{stochastic equicontinuity} along with Lemma \ref{basic observations}.
\hfill$\Box$
\medskip

Let us turn over to the proof of Lemma \ref{relationship}.
\medskip

\noindent
\textbf{Proof of Lemma \ref{relationship}:}\\[0.1cm]
Since $\Phi^{*}$ is convex, its right-sided derivative $\Phi^{*'}_{+}$ is nondecreasing. Then the inequality $|\Phi^{*}(x) - \Phi^{*}(y)|\leq \Phi^{*'}_{+}(x\vee y) |x - y|$ holds for $x,y\in\RRR$. In particular this yields $|\Phi^{*}(x)|\leq \Phi^{*'}_{+}(x^{+}) |x|$  for $x\in\RRR$ because $\Phi^{*}(0) = 0$. Hence we may observe 
$$
\big|G_{\Phi}\big((\theta,x),z\big)\big|\leq [\Phi^{*'}_{+}(\xi + \sup I) + 1](\xi(z) + \sup I)\leq  
C_{\FFF^{\Theta}_{\Phi,I}}(z)\quad\mbox{for}~(\theta,x)\in\Theta\times I, z\in\RRR^{d}
$$
and 
\begin{align*}
&
\big|G_{\Phi}\big((\theta,x),z\big) - G_{\Phi}\big((\vartheta,y),z\big)\big|^{2}\\
&\leq 
4 [\Phi^{*'}(\xi + \sup I) + 1]^{2} |G(\theta,z) - G(\vartheta,z)|^{2} + 4 [\Phi^{*'}(\xi + \sup I) + 1]^{2} |x - y|^{2}
\end{align*}
for $(\theta,x), (\vartheta,y)\in\Theta\times I$ and $z\in\RRR^{d}$. So firstly, $C_{\FFF^{\Theta}_{\Phi,I}}$ is a positive envelope of $\FFF^{\Theta}_{\Phi,I}$. Secondly, we may invoke Theorem 2.10.20 from \cite{vanderVaartWellner1996} to conclude 
\begin{align*}
&J(\FFF^{\Theta}_{\Phi,I},C_{\FFF^{\Theta}_{\Phi,I}},\delta)\\
&\leq \sqrt{2\ln(2)}~\delta + \sqrt{2}~\int_{0}^{\delta}\sup_{\MQ\in \cM_{\textrm{\tiny fin}}}\sqrt{\ln\big(N\big(\varepsilon~\|C_{\FFF^{\Theta}_{\Phi,I}}\|_{\MQ,2},\FFF^{\Theta}_{\Phi,I},L^{2}(\MQ)\big)\big)}~d\varepsilon\\
&\leq 
\sqrt{2\ln(2)}~\delta +\sqrt{2}~J(\FFF^{\Theta},\xi,\delta) + 
\sqrt{2} \int_{0}^{\delta}\sqrt{\ln\big(N (\eta \sup I,I,|\cdot|)\big)}~d\eta\quad\mbox{for}~\delta > 0,
\end{align*}
where $N (\eta\cdot\sup I,I,|\cdot|)$ denotes the minimal number to cover $I$ by intervals of the form $[x_{i}-\eta\cdot\sup I,x_{i} + \eta\cdot\sup I]$ with $x_{i}\in I$. Since 
$N(\eta\cdot\sup I,I,|\cdot|)\leq (\sup I - \inf I)/(\eta\cdot \sup I)$ holds for $\eta > 0$, and since $\sup I - \inf I \leq 2\sup I$, we obtain via the change of variable formula  
\begin{align*}
\int_{0}^{\delta}\sqrt{\ln\big(N (\eta\cdot\sup I,I,|\cdot|)\big)}~d\eta 
\leq 
\delta \int_{0}^{1}\sqrt{\ln\big((2/\delta)/\varepsilon\big)}~d\varepsilon
\quad\mbox{for}~\delta > 0.
\end{align*}
Now, we may  finish the proof by applying \eqref{Integralabschaetzung} for every $\delta\in ]0,\exp(-1)]$. 
\hfill$\Box$
\medskip

\noindent
\textbf{Proof of Lemma \ref{countable parameter subsets}:}\\[0.1cm]
For $n\in\NNN$, $\theta\in\Theta$, $x\in I$ and $(z_{1},\ldots,z_{n})\in\RRR^{d n}\setminus N_{n}$ we may conclude immediately from (A 3'') along with continuity of $\Phi^{*}$
$$
\inf_{(\vartheta,y)\in\overline{\Theta}\times I\cap\mathbb{Q}}\max_{j\in\{1,\ldots,n\}}\big|G_{\Phi}\big((\theta,y),z_{j}\big) - G_{\Phi}\big((\vartheta,x),z_{j}\big)\big| = 0.
$$
Next we may find by (A 3'') for a fixed $(\theta,x)\in\Theta\times I$ some sequence $\big(\theta_{n},x_{n}\big)_{n\in\NNN}$ in $\overline{\Theta}\times I\cap\mathbb{Q}$ such 
that $\EEE\big[|G(\theta_{n},Z_{1}) - G(\theta,Z_{1})|\big]\to 0$ and $x_{n}\to x$. In particular $G_{\Phi}\big((\theta_{n},x_{n}),Z_{1}\big)\to G_{\Phi}\big((\theta,x),Z_{1}\big)$ in probability because $\Phi^{*}$ is continuous. Since $\Phi^{*}$ is convex, nondecreasing with $\Phi^{*}(0) = 0$, we may observe $|\Phi^{*}(y)|\leq \Phi^{*}(|y|)$ for $y\in\RRR$.
Hence by (A 2') along with monotonicity of $\Phi^{*}$ we have for $\xi$ from (A 2')
$$
\sup_{n\in\NNN}\big|G_{\Phi}\big((\theta_{n},x_{n}),Z_{1}\big)\big|\leq \Phi^{*}\Big(\xi(Z_{1}) + \sup_{n\in\NNN}|x_{n}|\Big) + \sup_{n\in\NNN}|x_{n}|.
$$
Hence by (A 2') again the random variables $G_{\Phi}\big((\theta_{n},x_{n}),Z_{1}\big)$ are dominated by some integrable random variable. Then an application of Vitalis' theorem 
(see e.g. \cite[Theorem 21.4]{Bauer2001}) yields $\EEE\big[\big|G_{\Phi}\big((\theta_{n},x_{n}),Z_{1}\big) - G_{\Phi}\big((\theta,x),Z_{1}\big)\big|\big]\to 0$. This completes the proof. 
\hfill$\Box$


\begin{thebibliography}{}
\bibitem{BartlTangpi2020} Bartl, D. and Tangpi, L. (2020). \textit{Non-asymptotic rates for the estimation of risk measures}. arXiv:2003.10479.
\bibitem{Bauer2001} Bauer, H. (2001) \textit{Measure and integration theory}. de Gruyter, Berlin.
\bibitem{BelomestnyKraetschmer2016} Belomestny, D. and Kr\"atschmer, V. (2016). \textit{Optimal stopping under model uncertainty: A randomized stopping times approach.} Annals of Applied Probability \textbf{26}, 1260--1295.
\bibitem{Ben-TalTeboulle1987} Ben-Tal, A. and Teboulle, M. (1987). \textit{Penalty functions and duality in stochastic programming via $\phi-$divergence functionals.} Math. Oper. Research \textbf{12}, 224 -- 240.
\bibitem{Ben-TalTeboulle2007} Ben-Tal, A. and Teboulle, M. (2007). \textit{An old-new concept of convex risk measures: the optimized certainty equivalent.} Math. Finance  \textbf{17}, 449 -- 476.
\bibitem{BuldyginKozachenko2000} Buldygin, V. V. and Kozachenko, Yu. V. (2000). {\em Metric characterization of random variables and random processes.} American Mathematical Society, Providence, Rhode Island.
\bibitem{ChernozhukovEtAl2014}
Chernozhukov, V., Chetverikov, D. and Kato, K. (2014). \textit{Gaussian approximation of suprema of empirical processes}. Annals of Statistics \textbf{42}, 1564--1597.
\bibitem{DentchevaEtAl2017} Dentcheva, D., Penev, S. and Ruszczynski, A. (2017). \textit{Statistical estimation of composite risk functionals and risk optimization problems}. Annals of the Institute of Statistical Mathematics \textbf{69}, 737--760.
\bibitem{EdgarSucheston1992} Edgar, G. a. and Sucheston, L. (1992). \textsl{Stopping times and directed processes}. Cambridge University Press, Cambridge.
\bibitem{EichhornRoemisch2007} Eichhorn, A. and R\"omisch, W. (2007) \textit{Stochastic integer programming: Limit theorems and confidence intervals}. Mathematics of Operations Research 32, 118--135.
\bibitem{FoellmerSchied2011} F\"ollmer, H. and A. Schied (2011). \textsl{Stochastic Finance}. de Gruyter, Berlin, New York (3rd ed.).
\bibitem{GineNickl2016} Gine, E. and Nickl, R.  (2016). \textit{Mathematical Foundations of Infinite-Dimensional Statistical Models}. Cambridge University Press, Cambridge.
\bibitem{GuiguesEtAl2017} 
Guigues, V., Juditsky, A. and Nemirovski, A. (2017). Non-asymptotic confidence bounds for optimal value of a stochastic program. 
\textit{Optimization Methods and Software} 32, 1033--1058.
\bibitem{GuiguesKraetschmerShapiro2018} Guigues, V., Kr\"atschmer, V. and Shapiro, A. (2018). A central limit theorem and hypotheses testing for risk-averse stochastic programs. \textit{SIAM J. OPTIM.} 28, 1337--1366.
\bibitem{Haussler1995} Haussler, D. {\em Sphere packing numbers for subsets of the Boolean n-cube with bounded Vapnik-Chernvornenkis dimension}. Journal of Combinatorical Theory, Series A \textbf{69}, 217--232 (1995).
\bibitem{Kaasetal2008} Kaas, R., Goovaerts, M., Dhaene, J. and Denuit, M. (2008). \textit{Modern Actuarial Risk Theory}, Springer, Berlin and Heidelberg (2nd ed.).
\bibitem{KainaRueschendorf2007} Kaina, M. and R\"uschendorf, L. (2009). \textit{On convex risk measures on $L^{p}-$spaces}. Math. Methods Oper. Res. \textbf{69}, 475 -- 495. 
\bibitem{Kosorok2008} Kosorok, M. R. (2008). \textit{Introduction to empirical processes and semiparametric inference}. Springer, New York.
\bibitem{Kraetschmer2023} Kr\"atschmer, V. (2023). First order asymptotics of the sample average approximation method to solve risk averse stochastic progams. arXiv:2107.13863
\bibitem{McNeilEmbrechtsFrey2005} McNeil, A., Frey, R. and Embrechts, P. (2005). \textit{Quantitative Risk Management}, Princeton University Press, Princeton.
\bibitem{Petrov1995} Petrov, V. V. (1995). {\em Limit theorems of probability theory}, Oxford University Press, Oxford.
\bibitem{Pflug1999} Pflug, G. Ch. (1999). Stochastic programs and statistical data. {\em Annals of Operations Research} \textbf{85}, 59--78.
\bibitem{PflugRoemisch2007} Pflug, G. Ch. and R\"omisch, W. (2007). \textit{Modeling, Measuring and Managing Risk}, World Scientific, Singapore.
\bibitem{Roemisch2003} R\"omisch, W. (2003). Stability of stochastic programming problems, in: \textit{Stochastic programming}, Ruszczynski, A. and Shapiro, A. (eds.), Handbooks in Operations Research and Management Science 10, Elsevier, Amsterdam, 483--554.
\bibitem{Rueschendorf2013} R\"uschendorf, L. (2013). \textit{Mathematical risk analysis}, Springer, Berlin, Heidelberg.
\bibitem{Shapiro2013a} Shapiro, A. (2013). Consistency of sample estimates of risk avers stochastic program. {\em Journal of Applied Probability} \textbf{50}, 533--541.
\bibitem{ShapiroEtAl} Shapiro, A., Dentcheva, D. and Ruszczynski, A. (2014). {\em Lectures on stochastic programming.} MOS-SIAM Ser. Optim., Philadelphia (2nd ed.).
\bibitem{Talagrand1994}Talagrand, M. (1994). Sharper bounds for Gaussian and empirical processes. {\em Annals of Statistics} \textbf{22}, 28--76.
\bibitem{Talagrand1996} Talagrand, M. (1996). New concentration inequalities in product spaces. {\em Inventiones mathematicae} \textbf{126}, 505--563.
\bibitem{vandeGeer2000} van de Geer, S. (2000). \textit{Empirical processes in m-estimation}. Cambridge University Press, Cambridge.
\bibitem{vanderVaartWellner1996} van der Vaart, A.W. and Wellner, J.A. (1996). \textit{Weak convergence and empirical processes}. Springer, New York.
\end{thebibliography}

\end{document}